\documentclass[12pt]{article}
\usepackage[utf8]{inputenc}
\usepackage[T1]{fontenc}
\usepackage{lmodern}
\usepackage{graphicx}
\newcommand{\vs}{\vskip10pt}

\usepackage{amsmath}
\usepackage{amsthm}
\usepackage{amssymb}
\usepackage[export]{adjustbox}
\usepackage[margin=0.75in]{geometry}
\usepackage{setspace}
\usepackage{enumitem}
\usepackage{float}
\usepackage{tabularx,ragged2e}
\newtheorem{thm}{Theorem}[section]

\newtheorem{cor}[thm]{Corollary}

\newtheorem{rem}[thm]{Remark}
\newtheorem{lem}[thm]{Lemma}
\newtheorem*{lem*}{Lemma}
\newtheorem*{thm*}{Theorem}
\newtheorem*{cor*}{Corollary}
\newtheorem*{rem*}{Remark}
\newtheorem{clm}[thm]{Claim}
\newtheorem*{clm*}{Claim}
\newtheorem{ques}[thm]{Question}
\theoremstyle{definition}
\newtheorem{defn}[thm]{Definition}
\usepackage[all]{xy}
\usepackage{tikz-cd}
\usepackage{mathtools}
\usepackage{comment}
\usepackage{authblk}
\usepackage{url}

\newcommand{\N}{\mathbb{N}}
\newcommand{\Z}{\mathbb{Z}}
\newcommand{\R}{\mathbb{R}}
\newcommand{\C}{\mathbb{C}}
\newcommand{\K}{\mathbb{K}}
\newcommand{\Span}{\mathrm{Span}}
\newcommand{\ve}{\varepsilon}
\makeatletter
\usepackage{etoolbox}

\usepackage{hyperref}
\hypersetup{
	colorlinks=true,
	linkcolor=red,
	citecolor=blue,
	pdftitle={Hyperfiniteness of the boundary actions of relatively hyperbolic groups},
	pdfpagemode=FullScreen,
}

\title{Decent actions of groups on restricted products}
\author{Chris Karpinski}

\begin{document}

\maketitle

\begin{abstract}
    An action of a group $G$ on a set $X$ is called ``decent'' if every subgroup of $G$ with a finite orbit in $X$ fixes a point in $X$ and every finitely generated subgroup of $G$ such that every element of the subgroup fixes a point of $X$ must itself have a global fixed point. In this article, we study conditions on when actions of groups on restricted products are ``decent''. We prove that the action of the automorphism group of a restricted product with base space the projective plane $\mathbb{P}^2(k)$ over a field $k$ is decent, generalizing a result of Lonjou--Przytycki--Urech. 
\end{abstract}

\section{Introduction}

This paper examines actions of groups with various ``fixed point'' properties. 

\begin{defn}
    Let $X_0$ be a set. An action of a group $G$ on $X_0$ is \textbf{elliptic} (respectively, \textbf{purely elliptic}) if $G$ has a global fixed point on $X_0$ (respectively, each element of $G$ fixes a point of $X_0$). An element of $G$ (respectively, a subgroup of $G$) is called \textbf{elliptic} if it fixes a point of $X_0$.
\end{defn}

A recent question of interest has been to identify and study actions of groups on spaces having the property that every subgroup of the group that locally fixes points must have a global fixed point. 

\begin{defn}
    \label{def: decent action}
    An action of a group $G$ on a set $X_0$ is \textbf{decent} if: 
    \begin{enumerate}
        \item Every subgroup of $G$ with a finite orbit on $X_0$ fixes a point. 
        \item Every \emph{finitely generated} subgroup of $G$ acting purely elliptically on $X_0$ fixes a point. 
    \end{enumerate}
\end{defn}

The terminology of ``decent actions'' was first introduced in \cite[Definition 1.2]{Algebraic}, where they appeared in the study of subgroups of the plane Cremona group $\mathrm{C}_2(k)$, which is the group of all birational transformations of the projective plane $\mathbb{P}^2(k)$ over a field $k$. However, such actions have been studied in the literature before their apparition in \cite{Algebraic} (see, for instance, \cite{Breuillard_Fujiwara}, \cite{Locallyelliptic},  \cite{NOP22}).  

Decent actions appear in many natural examples. For instance, any isometric action of a group on a tree is decent (see, for instance, \cite[Proposition 2.5]{Introduction_to_group_theory}). For $d =2, 3$, any isometric action of a group on $\R^d$ equipped with the Euclidean metric is decent (\cite[Proposition 9.2]{Breuillard_Fujiwara}). Also, by \cite[Chapter II.2, Corollary 2.8(1)]{BH99}, item 1 in Definition \ref{def: decent action} holds for any isometric action of a group on a complete CAT(0) space, however, item 2 in Definition \ref{def: decent action} may fail even for actions on Hilbert spaces (for every $n \geq 2$, examples of finitely generated groups of isometries of $\R^{2n}$ with the Euclidean metric acting purely elliptically but not elliptically on $\R^{2n}$ were constructed in \cite[Example 9.1]{Breuillard_Fujiwara}). 

Haettel and Osajda conjecture in \cite{Locallyelliptic} that every action of a group on a ``finite-dimensional non-positively curved complex'' is decent. See \cite[Definition 2.1]{Locallyelliptic} for the precise definition of finite-dimensional and non-positively curved complex. 

In this paper, we generalize results of \cite{Algebraic} on establishing decentness of actions of groups on \emph{restricted products} (see Definition \ref{restricted product} below). 

The main result of Lonjou--Przytycki--Urech (\cite{Algebraic}) that we aim to generalize is \cite[Theorem 1.10]{Algebraic} (see Section \ref{res prod} for the definition of $G^{\oplus}$ and $\oplus_{p \in P} (X_p, x_p)$ below). 

\begin{thm}(\cite[Theorem 1.10]{Algebraic})
    \label{thm: Algebraic main theorem}
    Let $G_0$ be a group acting decently on a set $X_0$, with basepoint $x_0$. Let $P$ be the projective line $\mathbb{P}^1$ over an algebraically closed field $k$, and let $H = Aut(P) = PGL_2(k)$. Then $G^{\oplus}$ acts decently on $\oplus_{p \in P} (X_p, x_p)$, with $X_p = X_0$ and $x_p = x_0$ for all $p \in P$.  
\end{thm}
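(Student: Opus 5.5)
We use the conventions of Section~\ref{res prod}. A point $y$ of $\oplus_{p\in P}(X_p,x_p)$ is a function $P\to X_0$ with finite support $\mathrm{supp}(y)=\{p : y_p\neq x_0\}$. An element $\gamma\in G^{\oplus}$ is a pair $(h,(g_p)_p)$ with $h\in H$, and acts by $(\gamma y)_{hp}=g_{hp}\,y_p$, where $g_p x_0=x_0$ for all but finitely many $p$. Write $\pi:G^{\oplus}\to H$ for the projection and $N(\gamma)$ for the finite set of $p$ with $g_p x_0\neq x_0$. The strategy is to reduce everything to finitely many coordinates and then apply decency of $G_0$ on $X_0$ coordinatewise, via point stabilisers.

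\textbf{Item 1 (finite orbit).} Let $\Gamma\le G^{\oplus}$ have a finite orbit $\Gamma y$. Put $S=\bigcup_{z\in\Gamma y}\mathrm{supp}(z)$; this set is finite and $\pi(\Gamma)$-invariant.
\begin{enumerate}
\item For $p\notin S$ and $\gamma\in\Gamma$ we have $(\gamma y)_{hp}=g_{hp}x_0$, which must equal $x_0$ because $hp\notin S$. So every $\gamma$ sends $x_0$ at $p$ to $x_0$ at $hp$ whenever $p\notin S$.
\item Choose representatives $p_1,\dots,p_m$ of the $\pi(\Gamma)$-orbits in $S$. The stabiliser $\Gamma_{p_i}=\{\gamma:\pi(\gamma)p_i=p_i\}$ acts on $X_{p_i}=X_0$ through $\gamma\mapsto g_{p_i}$; the cocycle identity shows this is a genuine action.
\item The $\Gamma_{p_i}$-orbit of $y_{p_i}$ is contained in $\{z_{p_i}: z\in\Gamma y\}$, hence finite. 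Decency of $X_0$ then gives a point $w_i\in X_0$ fixed by $\Gamma_{p_i}$.
\item Define $w$ by $w_p=x_0$ for $p\notin S$, and $w_{hp_i}=g_{hp_i}w_i$ for $\gamma=(h,(g_p))\in\Gamma$. This is well defined because $w_i$ is $\Gamma_{p_i}$-fixed, and it is $\Gamma$-fixed by construction together with step 1.
\end{enumerate}

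\textbf{Item 2 (finitely generated, purely elliptic).} Let $\Gamma=\langle s_1,\dots,s_n\rangle$ act purely elliptically, and set $F=\bigcup_j N(s_j)$, a finite set. If $F=\emptyset$, the constant point $x_0$ is fixed. The key claim is that $S=\pi(\Gamma)F$ is finite.

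Granting the claim, let $\Gamma'$ be the kernel of the action of $\Gamma$ on $S$. It has finite index in $\Gamma$, so it is finitely generated and still purely elliptic. Coordinates outside $S$ are harmless, since every $\gamma\in\Gamma$ satisfies $N(\gamma)\subseteq S$ by the cocycle identity. For each $p\in S$, the group $\Gamma'$ acts on $X_p$ via $g_p$. This action is purely elliptic: if $\gamma$ fixes $y$, then since $\pi(\gamma)$ fixes $p$, the element $g_p$ fixes $y_p$. By decency, $\Gamma'$ fixes some $w_p\in X_p$. Setting $w=x_0$ off $S$ gives a $\Gamma'$-fixed point, so $\Gamma$ has a finite orbit, and Item~1 finishes the proof.

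\textbf{Main obstacle: proving the claim.} This is where the specific geometry of $P=\mathbb{P}^1(k)$ and $H=PGL_2(k)$ must enter. I would use the structure of finitely generated subgroups of $PGL_2(k)$: such a subgroup either has a finite orbit in $\mathbb{P}^1$ (it fixes a point, preserves a pair of points, or is finite), or it has infinite orbits on every point outside at most two. For the subgroup $\pi(\Gamma)$, I would argue by contradiction.
\begin{enumerate}
\item Suppose some $p\in F$ has infinite $\pi(\Gamma)$-orbit.
\item Find a word $\gamma$ in the generators, with $\pi(\gamma)$ of infinite order and $\langle\pi(\gamma)\rangle$-orbit of a suitable $p$ infinite, such that $N(\gamma)$ contains a point $q$ with $g_q x_0\neq x_0$.
\item Suppose $\gamma$ fixes $y$. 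Then $y_q$ and $y_{\pi(\gamma)^{-1}q}$ cannot both equal $x_0$. Iterating the relation $y_{\pi(\gamma)r}=g_{\pi(\gamma)r}\,y_r$ along the infinite orbit of $q$ then forces $\mathrm{supp}(y)$ to be infinite, which is impossible.
\end{enumerate}
Producing such a $\gamma$ requires care: one must choose products of generators, possibly conjugated, so that the bad coordinates do not cancel along the orbit. This is the step I expect to be hardest.
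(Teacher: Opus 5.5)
\textbf{There is a genuine gap: the key claim in Item 2 is false, not merely hard to prove, and the same example breaks Item 1.}

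Your Item 2 reduction rests on the claim that $\pi(\Gamma)F$ is finite. Here is a counterexample. Take $g\in G_0$ with $gx_0\neq x_0$, a point $p\in P$, and $h\in PGL_2(k)$ with $h^np\neq p$ for all $n\neq 0$ (e.g.\ $h$ induced by $z\mapsto\lambda z$ with $\lambda$ not a root of unity and $p\neq 0,\infty$). Let $c\in\prod^r_P G_0$ have $c_p=g$ and $c_q=1$ for $q\neq p$. Then $\gamma=chc^{-1}\in G^{\oplus}$ has the following properties:
\begin{enumerate}
\item it projects to $h$;
\item its coordinates are $g$ at $p$, $g^{-1}$ at $hp$, and $1$ elsewhere;
\item it fixes the point $y$ with $y_p=gx_0$ and $y_q=x_0$ for $q\neq p$.
\end{enumerate}
So $\Gamma=\langle\gamma\rangle$ is finitely generated and even elliptic, yet $F=N(\gamma)=\{p,hp\}$ has infinite $\pi(\Gamma)$-orbit.

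The cancellation you hoped to exclude in step 3 is exactly what happens: $y_{hp}=g^{-1}gx_0=x_0$. No choice of words can prevent it. Ellipticity is invariant under conjugation by $\prod^r_P G_0$, whereas $N(\cdot)$, which measures singularity relative to the basepoint $x_0$, is not.

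The same example breaks your Item 1. First, $S=\mathrm{supp}(y)=\{p\}$ is not $\pi(\Gamma)$-invariant. Second, step 1 fails: $h^{-1}p\notin S$, but $(\gamma y)_p=gx_0\neq x_0$. Third, $\Gamma_p$ is trivial, so step 4 allows $w_1=x_0$, which gives $w_{h^np}=g^{-1}x_0$ for every $n\neq 0$, i.e.\ infinite support. Item 1 can be repaired as follows. Let $\Gamma''$ be a finite-index normal subgroup of $\Gamma$ fixing $\Gamma y$ pointwise. On each infinite $\pi(\Gamma)$-orbit, all $\Gamma''$-fixed points coincide, by the argument of Lemma \ref{Inf orb fix pt}, so you can keep $w=y$ there. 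Use stabilisers and decency of $G_0$ only on the finitely many finite orbits meeting $S$, and put $x_0$ on the remaining finite orbits. This is the content of \cite[Lemma 4.2]{Algebraic}, which the paper simply quotes.

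\textbf{The missing idea for Item 2.} Singularity must be measured relative to a well-chosen point of $X$, not relative to $x_0$. The paper does not reprove this theorem; it imports it from \cite{Algebraic}. Section \ref{sec: NSD element} is the $\mathbb{P}^2$ version of the same mechanism, which runs as follows.
\begin{enumerate}
\item Choose an absolute value on a finitely generated field of definition so that some $t\in\Gamma$ has projection with north--south dynamics. In $PGL_2$ this means an eigenvalue ratio of modulus $\neq 1$.
\item Take a fixed point $z$ of $t$. Show that each $f\in\Gamma$ is biregular with respect to $z$ over every $r$ with $r$ and $f(r)$ away from the fixed points of $t$. If not, ping-pong makes $t^nf$ (for large $n$) have persistent fibre over $r$. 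Then $t^nf$ fixes no point, contradicting pure ellipticity (compare Claim \ref{lem: regularity lemma}).
\item Correct $z$ on the exceptional set coming from the fixed points of $t$.
\item When no such $t$ exists, all eigenvalue ratios are roots of unity and the projection is virtually abelian. This case is handled by the abelian-projection result \cite[Lemma 4.7]{Algebraic}, quoted in the introduction, together with finite-index reductions.
\end{enumerate}
Nothing in your proposal plays the role of $z$ or of the persistent-fibre argument. The orbit structure of finitely generated subgroups of $PGL_2(k)$ alone cannot substitute for it.
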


Note that by \cite[Lemma 4.2]{Algebraic}, item 1 in Definition \ref{def: decent action} for $G^{\oplus} \curvearrowright \oplus_{p \in P} (X_p, x_p)$ follows from item 1 for $G_0 \curvearrowright X_0$. Therefore, it suffices to prove item 2 for $G^{\oplus} \curvearrowright \oplus_{p \in P} (X_p, x_p)$, i.e.\ that each finitely generated subgroup $\Gamma < G^{\oplus}$ acting purely elliptically on $\oplus_{p \in P} (X_p, x_p)$ acts elliptically. 

Also, note that while stated for algebraically closed fields $k$, Theorem \ref{thm: Algebraic main theorem} holds for arbitrary fields. Indeed, if $K$ is an arbitrary field and $k$ is an algebraic closure of $K$, then denoting $P = \mathbb{P}^1(K)$, $P' = \mathbb{P}^1(k)$, $G^{\oplus} = \prod_P^r G_0 \rtimes PGL_2(K)$, $G'^{\oplus} = \prod_{P'}^r G_0 \rtimes PGL_2(k)$, $X = \oplus_{p \in P} (X_p, x_p)$ and $X'=\oplus_{p \in P'} (X_p, x_p)$, we can identify $P < P'$ and we have natural embeddings:

$$\phi:X \hookrightarrow X'$$

and

$$\psi:G^{\oplus} \hookrightarrow G'^{\oplus}$$

defined by $\phi(x) = z$ where $z_p = x_p$ for each $p \in P$ and $z_q = x_0$ for each $q \in P' \setminus P$, and $\psi((g_p)_{p \in P}, h) = ((g_q')_{q \in P'},h)$ where $g'_p = g_p$ for each $p \in P$ and $g'_q = 1$ for each $q \in P' \setminus P$, with $\phi$ being $\psi$-equivariant.

Thus, if $\Gamma < G^{\oplus}$ is finitely generated and acts purely elliptically, then the same is true for $\psi(\Gamma)$, and hence by Theorem \ref{thm: Algebraic main theorem}, we have that $\psi(\Gamma)$ fixes a point $y \in X'$. But since $G < PGL_2(K)$, we can choose $y \in \phi(X)$. Indeed, given a fixed point $y'$ of $\psi(\Gamma)$ in $X$, we define $y_p = y'_p$ for each $p \in P$ and $y_p = x_0$ for $p \in P' \setminus P$. Then $y \in \phi(X)$ and $y$ is still a fixed point of $\psi(\Gamma)$. Indeed, given $\gamma \in \Gamma$, write $\psi(\gamma) = ((g_p')_{p \in P'}, h)$ where $g_p' = 1$ for each $p \in P' \setminus P$. Then, for $p \in P' \setminus P$, since $G$ preserves $P' \setminus P$ we have 

$$(\psi(\gamma)y)_p = y_{h^{-1}(p)} = x_0 = y_p$$  and for each $p \in P$, since $y'$ is a fixed point of $\psi(\Gamma)$, we have have 

$$(\psi(\gamma)y)_p = g_p' y_{h^{-1}(p)} = g_p' y'_{h^{-1}(p)} = y_p' = y_p$$

Hence, by $\psi$-equivariance of $\phi$, we have that $\Gamma$ fixes a point of $X$.

We generalize Theorem \ref{thm: Algebraic main theorem} to the following, which is the main result of this paper: 

\begin{thm}
    \label{thm: main thm}
    Let $G_0$ be a group acting decently on a set $X_0$, with basepoint $x_0$. Let $P$ be the projective plane $\mathbb{P}^2$ over a field $k$, and let $H = Aut(P) = PGL_3(k)$. Then $G^{\oplus}$ acts decently on $\oplus_{p \in P} (X_p, x_p)$, with $X_p = X_0$ and $x_p=x_0$ for all $p \in P$.  
\end{thm}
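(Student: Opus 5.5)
The plan has two parts, matching the two conditions in Definition \ref{def: decent action}. Condition 1 for $G^{\oplus} \curvearrowright \oplus_{p \in P}(X_p,x_p)$ follows from condition 1 for $G_0 \curvearrowright X_0$ by \cite[Lemma 4.2]{Algebraic}. So it remains to prove condition 2. Fix a finitely generated $\Gamma < G^{\oplus}$ acting purely elliptically, with generators $\gamma_i = ((g^i_p)_p, h_i)$ for $i=1,\dots,n$. Let $S \subset P$ be the finite set of points $p$ with $g^i_p \neq 1$ for some $i$, and let $\bar\Gamma < PGL_3(k)$ be the image of $\Gamma$.

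The first key step is a reduction valid for any base space $P$. If the orbit $\bar\Gamma S$ is finite, then $\Gamma$ preserves the finite coordinate block $\bar\Gamma S$. It acts on the product $\prod_{p \in \bar\Gamma S} X_p$ through a finitely generated group, built as a wreath-type extension of $G_0$ by a finite permutation group. We extract a fixed point there, and pad with basepoints elsewhere. The padded point is fixed because every coordinate outside $\bar\Gamma S$ carries trivial $g_p$ and the basepoint. To extract the fixed point we pass to a finite-index subgroup $\Gamma'$ fixing $\bar\Gamma S$ pointwise. Each coordinate projection of $\Gamma'$ to $G_0$ is finitely generated and purely elliptic, hence fixes a point by decentness of $G_0$. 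The $\Gamma$-orbit of the resulting fixed point is finite, so condition 1 gives a fixed point. This is the argument of \cite{Algebraic}, and I would reuse it verbatim.

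The second and main step is to show that pure ellipticity forces $\bar\Gamma S$ to be finite, or else to reduce to that case. Consider an element $\gamma=((g_p),h)$. Write its $m$-th power as $\gamma^m=((g_p g_{h^{-1}p}\cdots g_{h^{-(m-1)}p}),h^m)$; thus ellipticity of $\gamma$ constrains the products of the $g$'s along $h$-orbits. In particular, if $h$ has an infinite orbit through a point of $\mathrm{supp}(\gamma)$, then the elements $g_{p}$ along that orbit must telescope to fix suitable points, since a fixed point has only finitely many non-basepoint coordinates. I would classify $\bar\Gamma$ by its action on $\mathbb{P}^2(k)$, passing to $\bar k$ by the same embedding trick given above for $\mathbb{P}^1$.

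There are three cases.
\begin{enumerate}
\item If $\bar\Gamma$ preserves a point $q$ or a line $L$, I would project from $q$ (respectively restrict to $L$) to obtain an action on $\mathbb{P}^1$. Then I would use Theorem \ref{thm: Algebraic main theorem} fibrewise: group coordinates by lines through $q$ and treat each fibre as a restricted product over an affine line. This should reduce to the $\mathbb{P}^1$ case applied to a new decent action of a group $G_0'$ on $X_0'=\oplus_{\text{line}}(X_p,x_p)$. That new action is itself decent by an inductive use of the affine-line version of the result.
\item If $\bar\Gamma$ preserves a conic, the conic is $\mathbb{P}^1$ and $\bar\Gamma$ lands in $PGL_2$. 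I would handle this similarly.
\item Otherwise, $\bar\Gamma$ is Zariski-dense-like, and I would produce, by ping-pong or a Tits-alternative-type argument, an element $\gamma\in\Gamma$ that moves the finite support $S$ along an infinite orbit. The $g$-product along that orbit must be nontrivial, contradicting ellipticity of $\gamma$.
\end{enumerate}

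The main obstacle I expect is this last case. I need to show that a finitely generated subgroup of $PGL_3(k)$ with no invariant point, line or conic contains, for each finite $S$ meeting the support nontrivially, an element $\gamma$ for which the "twisted product" obstruction is genuinely nontrivial. Simply having an infinite $\bar\Gamma$-orbit of $S$ is not enough, as the $\mathbb{P}^1$ case already shows. I would first try to mimic the combinatorial argument of \cite{Algebraic}, which chooses a product of generators whose projective part sends one support point far from the others. I would adapt it using generic elements, which have isolated fixed points in $\mathbb{P}^2$, in place of $PGL_2$ elements with at most two fixed points.
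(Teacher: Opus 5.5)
\textbf{There is a genuine gap, in your case 3.} Several of your other steps do match the paper:
\begin{itemize}
\item condition 1;
\item the case where $\bar\Gamma S$ is finite;
\item the case where $\bar\Gamma$ fixes a point $q$. Here you fibre $P\setminus q$ by the pencil of lines through $q$, show that $\prod^r_{\mathbb{P}^1\setminus e_0}G_0\rtimes\mathrm{Stab}(e_0)$ acts decently, and apply Theorem \ref{thm: Algebraic main theorem} over the pencil $\cong\mathbb{P}^1$. This is exactly the paper's Lemma \ref{lem: virtual fixed point on P implies elliptic}.
\end{itemize}
A minor fix: define $S$ by $g^i_p\notin\mathrm{Stab}_{G_0}(x_0)$, not by $g^i_p\neq 1$, since otherwise $S$ need not be finite.

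Case 3 is a wrong strategy rather than a missing detail, because in the irreducible case there is no contradiction to be found. Take $a\in G_0$ with $ax_0\neq x_0$. Let $\delta\in\prod^r_PG_0$ equal $a$ at one point $p_0$ and $1$ elsewhere. For any finitely generated Zariski dense $\bar\Gamma<PGL_3(k)$, put $\Gamma=\delta\,\{(1,h):h\in\bar\Gamma\}\,\delta^{-1}$. Then $\Gamma$ fixes the image under $\delta$ of the basepoint of $X$. Yet $\delta(1,h)\delta^{-1}$ has support $\{p_0,hp_0\}$, so $\bar\Gamma S$ is infinite, and no element of $\Gamma$ carries a nontrivial ``twisted product'' obstruction. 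So the statement you identify as the main obstacle is false. $\bar\Gamma S$ can be neither forced nor reduced to be finite, and in this case you must \emph{construct} the fixed point.

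The paper does this as follows. It chooses a local field extending a finitely generated subfield of $k$ in which $G$ contains a very proximal element $t$ (eigenvalues of three distinct absolute values, with $P_\pm$ its two invariant lines through the middle eigendirection). It fixes $z$ with $tz=z$. It then shows by ping-pong with $t^{\pm n}$ that if some $f\in G$ were singular over $r$ with $r,f(r)\notin P_+\cup P_-$, then either one of $t^nf$, $t^Nft^nf$, $t^{-n}ft^nf$, $t^{-n}f^2t^Nf$ would have persistent fibre over $r$, or $\langle t,f\rangle$ virtually fixes a point of $P$. Finally $z$ is corrected on $P_+\cup P_-$. The contradiction is with a local singularity relative to a well-chosen $z$, not with a global support obstruction.

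Your other two cases also do not reduce to $\mathbb{P}^1$.
\begin{itemize}
\item \emph{Invariant line $L$.} Restricting to $L$ only handles the coordinates on $L$. On $P\setminus L\cong\mathbb{A}^2$ the group acts by affine maps with no invariant pencil (think of $SL_2(\Z)\ltimes\Z^2$), which is as hard as the general case. The paper treats this through the linear part. If it is virtually solvable, Borel's theorem gives a virtual fixed point in $P$. Otherwise the Tits alternative gives an element with eigenvalues $\lambda,\lambda^{-1},1$, with $\lambda$ not a root of unity, which is very proximal for a suitable absolute value.
\item \emph{Invariant conic $C$.} The coordinates off $C$ are not governed by Theorem \ref{thm: Algebraic main theorem}, so ``handle this similarly'' has no content. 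In the paper such groups fall into either the virtually solvable case or the very proximal case.
\end{itemize}

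What is missing overall is the dichotomy of Lemmas \ref{lem: No proximal implies virtually nilpotent} and \ref{lem: no very proximal implies virtual fixed point on P}. Either $G$ has a very proximal element over some local field, or a finite-index subgroup fixes a point of $\mathbb{P}^2$. This has to be combined with the constructive dynamical argument of Lemma \ref{lem: very proximal}.
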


For the same reason as noted for Theorem \ref{thm: Algebraic main theorem}, item 1 in Definition \ref{def: decent action} for $G^{\oplus} \curvearrowright \oplus_{p \in P} (X_p, x_p)$ follows automatically from item 1 for $G_0 \curvearrowright X_0$, and it suffices to prove Theorem \ref{thm: main thm} for algebraically closed fields, hence we will assume that $k$ is algebraically closed in the proof of Theorem \ref{thm: main thm}. 

As an application of Theorem \ref{thm: Algebraic main theorem}, Lonjou--Przytycki--Urech are able to deduce local to global results on subgroups of $\mathrm{C}_2(k)$, most notably \cite[Theorem 1.1]{Algebraic}, stating that every finitely generated subgroup of $\mathrm{C}_2(k)$ all of whose elements are algebraic is bounded (see \cite{Algebraic} for the definitions of ``algebraic'' elements and ``bounded'' subgroups of $\mathrm{C}_2(k)$). This is achieved by studying the action of $\mathrm{C}_2(k)$ on a restricted product of simplicial trees called the ``Jonqui\`eres complex'', and showing that this action is decent. It is then shown that algebraic elements of $\mathrm{C}_2(k)$ are precisely the elliptic elements with respect to this action, and that the bounded subgroups are precisely the elliptic subgroups with respect to this action. 

An important ingredient in the proof of Theorem \ref{thm: Algebraic main theorem} is to establish decentness of actions on restricted products where the acting subgroup has abelian projection to $PGL_2(k)$. See Section \ref{res prod} for an explanation of the notation and terminology below. 

\begin{lem}(\cite[Lemma 4.7]{Algebraic})
    Let $G_0$ be a group acting decently on $X_0$. Let $H$ be a group acting on $P$, and let $\Gamma < G^{\oplus}$ be finitely generated and act purely elliptically on $X = X_0^{\oplus P}$. Suppose that $\mathrm{proj}_H(\Gamma)$ is abelian. If $\mathrm{proj}_H(\Gamma) < H$ is trivial or it contains an element that has only finitely many finite orbits on $P$, then $\Gamma$ fixes a point of $X$.
\end{lem}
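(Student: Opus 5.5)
The plan is to write $A=\mathrm{proj}_H(\Gamma)$ and fix a finite generating set $\gamma_i=(g^i,h_i)$, $1\le i\le n$, of $\Gamma$. Recall that $(g,h)$ acts on $X$ by $((g,h)x)_p=g_p\,x_{h^{-1}(p)}$. Let $S\subset P$ be the union of the supports of the $g^i$; this set is finite. The supports of all elements of $\Gamma$ lie in $A\cdot S$. The strategy is to build a fixed point $y$ coordinate by coordinate, by splitting $P$ into an $A$-invariant finite part and a part on which the coordinates can be pushed to the basepoint.

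\emph{Trivial projection.} If $A=1$, then $\Gamma<\prod^r_P G_0$ acts coordinatewise. For $p\in S$, let $\Gamma_p<G_0$ be the image of $\Gamma$ under $g\mapsto g_p$. It is finitely generated by the $g^i_p$. It is purely elliptic: if $\gamma$ fixes $x\in X$, then $\gamma_p$ fixes $x_p$. By decency of $G_0\curvearrowright X_0$, $\Gamma_p$ fixes some $y_p\in X_0$. Put $y_p=x_0$ for $p\notin S$. Then $y\in X$, since $S$ is finite, and $y$ is fixed by $\Gamma$.

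\emph{An element $h\in A$ with finitely many finite orbits.} Let $F$ be the union of the finite $\langle h\rangle$-orbits. It is finite. Since $A$ is abelian, every $a\in A$ commutes with $h$, so $a$ maps $\langle h\rangle$-orbits to $\langle h\rangle$-orbits of the same cardinality. Hence $F$ is $A$-invariant.

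\emph{Step 1: the coordinates in $F$.} Let $A_F$ be the pointwise stabilizer of $F$ in $A$, and let $\Gamma_F$ be its preimage in $\Gamma$. Then $\Gamma_F$ has finite index in $\Gamma$, so it is finitely generated. It acts coordinatewise on $\prod_F X_0$, and each coordinate projection is finitely generated and purely elliptic. Hence $\Gamma_F$ fixes a point of $\prod_F X_0$, and the $\Gamma$-orbit of this point is finite. The $\Gamma$-action on $\prod_F X_0$ is an induced (permutational wreath-type) action of the kind treated in \cite[Lemma 4.2]{Algebraic}. I would apply that argument to the stabilizers in $\Gamma$ of the points $p\in F$ acting on the $p$-coordinate, using item~1 of decency for $G_0$. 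This should yield a $\Gamma$-fixed tuple $(y_p)_{p\in F}$.

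\emph{Step 2: the coordinates outside $F$.} Here I would show that one may take $y_p=x_0$ for all but finitely many $p\notin F$, and define the remaining coordinates by transport along orbits. Choose $\gamma=(g,h)\in\Gamma$ over $h$, and let $x$ be a point fixed by $\gamma$. The fixed-point equation $x_{h(p)}=g_{h(p)}x_p$ holds along every infinite $\langle h\rangle$-orbit $O$. Since both $x$ and $g$ have finite support, $x_q=x_0$ at both ends of $O$. So on each such orbit, $x|_O$ is determined by starting at $x_0$ far out and applying the $g_q$ successively. I would define $y$ on $P\setminus F$ by exactly this recipe: $y_q=x_0$ far out along each $\langle h\rangle$-orbit, propagated by the cocycle $q\mapsto g_q$. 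Thus $y|_{P\setminus F}=x|_{P\setminus F}$, which has finite support.

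\emph{Main obstacle.} The hard step is checking that this $y$ is also fixed by every other generator $\gamma_i$, not just by $\gamma$. The idea is that $\gamma_i$ commutes with $\gamma$ modulo $\prod^r_P G_0$. Hence $\gamma_i y$ is again a point that is $\gamma$-fixed on $P\setminus F$, and it equals $x_0$ far out along each infinite $\langle h\rangle$-orbit (because $\gamma_i$ has finite support and $h_i$ permutes the infinite orbits). The recipe above produces a unique such point, so $(\gamma_i y)|_{P\setminus F}=y|_{P\setminus F}$. The delicate point is justifying that $h_i$ maps the "far out" region of each infinite orbit to the far-out region of another, in both directions. I would first try to verify this using that $h_i$ commutes with $h$, so $h_i(h^n p)=h^n h_i(p)$. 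Combining Steps 1 and 2 then gives a $\Gamma$-fixed point of $X$.
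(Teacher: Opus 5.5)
There is a genuine gap in Step 2, and it is not where you locate it. Your trivial case and Step 1 are fine: Step 1 is \cite[Lemma 4.2]{Algebraic} applied to the finite $\Gamma$-invariant set $F$, and ``support'' should mean $\{p: g_p\notin\mathrm{Stab}_{G_0}(x_0)\}$.

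The problem is the inference from commutation. Since $h_i$ commutes with $h$, you only get $\gamma_i^{-1}\gamma\gamma_i=\gamma\kappa$ with $\kappa\in\Gamma\cap\prod^r_P G_0$. Hence $\gamma(\gamma_i y)=\gamma_i y$ holds if and only if $\kappa y=y$. Nothing you wrote forces this: pure ellipticity of $\kappa$ only says each $\kappa_p$ fixes \emph{some} point of $X_0$, not that it fixes $y_p$.

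In fact Step 2 uses pure ellipticity only for the single element $\gamma$, and that cannot suffice. Take $P=\Z$ with $H=\Z$ acting by translations and $h$ the unit translation. Let $G_0=\Z/2$ act on $X_0=\{a,b,b'\}$ by swapping $b,b'$; this is decent, since $a$ is fixed by everything. Put $x_0=b$, $\gamma=(1,h)$, and $\delta=(e_0,1)$ with $e_0$ nontrivial only at coordinate $0$. Both elements are elliptic, and they commute modulo $\prod^r_P G_0$. The unique $\gamma$-fixed point $y$ is the basepoint, yet $\delta y\neq y$, so $\delta y$ is not $\gamma$-fixed. Indeed $\gamma\delta$ has no fixed point in $X$, so this $\Gamma$ is not purely elliptic, but your inference never used that.

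Conversely, the point you flag as delicate is a non-issue. Any $\gamma_i y$ lies in $X$, so it equals $x_0$ off a finite set automatically.

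What is missing is the persistent-fibre argument, which is how pure ellipticity of the whole group enters. This is the argument the paper relies on from \cite{Algebraic} and carries out in the proof of Lemma \ref{lem: nilpotent case}. There one fixes $z$ fixed by $t$. If $f\in\Gamma$ were singular over $p$, one chooses $n$ so that the points $(t^nf)^M(p)$, $M\neq 0$, avoid the finite set $B_f$. Then $t^nf$ has persistent fibre over $p$, contradicting pure ellipticity.

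Your route can be repaired by running this only for kernel elements. Suppose $\kappa\in\Gamma\cap\prod^r_PG_0$ were singular over some $p\in P\setminus F$ with respect to $y$. Then $\kappa$ acts trivially on $P$, $B_\kappa$ is finite, and $\langle h\rangle p$ is infinite. So you can pick $n$ with $h^{nm}(p)\notin B_\kappa$ for all $m\neq 0$, and then $\gamma^n\kappa$ has persistent fibre over $p$, a contradiction. Hence the whole kernel fixes $y$, so $\gamma$ fixes $\gamma_i y$. Lemma \ref{Inf orb fix pt} then gives $\gamma_i y=y$ on $P\setminus F$.

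With that insertion your proof is a valid variant of the paper's. It avoids the case analysis on how $\langle t\rangle p$ meets $\langle f\rangle p$ in the proof of Lemma \ref{lem: nilpotent case}. Instead it uses uniqueness of the $\gamma$-fixed point, much as in the proof of Lemma \ref{normal nilpotent}.
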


We generalize this to the following:

\begin{lem}
    \label{lem: virtually nilpotent projection}
    Let $G_0$ be a group acting decently on $X_0$. Let $H$ be a group acting on $P$, and let $\Gamma < G^{\oplus}$ be finitely generated and act purely elliptically on $X$. If $\mathrm{proj}_H(\Gamma)$ is virtually nilpotent, then $\Gamma$ fixes a point of $X$.
\end{lem}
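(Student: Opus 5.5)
The plan is to reduce to a torsion-free nilpotent projection, localize the problem coordinate by coordinate, and use pure ellipticity only on single elements and on finitely generated subgroups. Write $N=\mathrm{proj}_H(\Gamma)$. It is finitely generated because $\Gamma$ is.

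\emph{Step 1: reduction to a finite-index subgroup.} Choose a finite-index subgroup $N'<N$ that is nilpotent, and torsion-free as well, since finitely generated nilpotent groups are virtually torsion-free. Let $\Gamma'=\Gamma\cap\mathrm{proj}_H^{-1}(N')$. Then $\Gamma'$ has finite index in $\Gamma$, so it is finitely generated and purely elliptic. If $\Gamma'$ fixes a point of $X$, then $\Gamma$ has a finite orbit. By \cite[Lemma 4.2]{Algebraic}, item 1 of Definition \ref{def: decent action} holds for $G^{\oplus}\curvearrowright X$, so $\Gamma$ fixes a point. We may therefore assume from now on that $N$ is finitely generated, torsion-free and nilpotent.

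\emph{Step 2: localization.} Let $S\subset P$ be the finite union of the supports of the $G_0$-parts of a finite generating set of $\Gamma$. Every element of $\Gamma$ has $G_0$-support inside $N\cdot S$, which is a finite union of $N$-orbits $O_1,\dots,O_m$. Outside $N\cdot S$ we set all coordinates equal to $x_0$, and this is preserved by $\Gamma$. The fixed point we build will have its non-basepoint coordinates only on the finite orbits $O_i$.

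On a finite orbit $O_i$, fix $p\in O_i$. The stabilizer $N_p$ has finite index in $N$, so its preimage $\Gamma_p$ has finite index in $\Gamma$ and is finitely generated. The map $\gamma\mapsto g_p$ is a homomorphism $\Gamma_p\to G_0$, and its image is purely elliptic on $X_0$: if $\gamma y=y$, then $g_p y_p=y_p$. Since $G_0\curvearrowright X_0$ is decent, this image fixes some $z\in X_0$. Set $x_{\gamma p}=g_{\gamma p}\, z$ for coset representatives $\gamma$ of $\Gamma_p$ in $\Gamma$; this is the standard induction and gives a $\Gamma$-invariant assignment on $O_i$.

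\emph{Step 3: infinite orbits (the main obstacle).} On an infinite orbit $O$ we must show that setting $x_q=x_0$ for all $q\in O$ is $\Gamma$-invariant. This amounts to showing $g_q x_0=x_0$ for every $\gamma\in\Gamma$ and every $q\in O$.

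\begin{itemize}
\item \emph{Key case.} Suppose $\gamma=((g_q),h)$ and $q$ lies in an infinite $\langle h\rangle$-cycle. Let $y$ be a fixed point of $\gamma$. Since $y$ has finite support, $y$ equals $x_0$ far out along the cycle. The relation $g_{h^j q}\,y_{h^{j-1}q}=y_{h^j q}$, together with the fact that $g=1$ off the finite set $S$, propagates $x_0$ along the cycle. Hence $y=x_0$ on the whole cycle, and so $g_q x_0=x_0$.
\item \emph{General case.} The remaining difficulty is that an element may have only finite cycles on $O$. For abelian $N$ this is handled as in \cite[Lemma 4.7]{Algebraic}: since $N_q$ has infinite index, some $n\in N$ satisfies $\langle n\rangle\cap N_q=1$. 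I would try to reduce the nilpotent case to this by induction on the Hirsch length of $N$, using the center $Z=Z(N)$, which is infinite because $N$ is torsion-free.
\begin{itemize}
\item If $Z\cap N_q$ has infinite index in $Z$, then some central element $c$ acts with infinite cycles on all of $O$. Comparing $\gamma$ with its conjugates by lifts of $c$, and using the Key case for elements mapping to $c$, should force $g_q x_0=x_0$.
\item Otherwise $Z$ virtually fixes $O$ pointwise. Then I would pass to the subgroup of $\Gamma$ projecting into a finite-index subgroup of $Z$ and to the quotient $N/Z$, and apply the induction hypothesis to the induced action.
\end{itemize}
\end{itemize}
Making this inductive step precise is where I expect the real work: one has to check that the quotient situation is again a restricted-product action with a purely elliptic, finitely generated group.
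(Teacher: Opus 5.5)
\textbf{There is a genuine gap in Step 3: the goal you set there is false, so no argument can reach it.}

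A fixed point of $\Gamma$ need not equal $x_0$ on an infinite orbit, and $g_q x_0=x_0$ need not hold. Here is a counterexample.
Let $G_0=\Z/2$ act on $X_0=\{x_0,x_1,x_2\}$ by swapping $x_0,x_1$. This action is decent, since every subgroup fixes $x_2$.
Let $P=\Z$ with $H=\Z$ acting by translation, and let $\gamma=(g,+1)$ with $g_0=g_1=a$ the generator and $g_j=1$ otherwise.
Let $y$ be the point with $y_0=x_1$ and $y_j=x_0$ for $j\neq 0$. Then $(\gamma y)_j=g_jy_{j-1}=y_j$ for all $j$.
So $\Gamma=\langle\gamma\rangle$ is finitely generated and elliptic, with torsion-free abelian projection and a single infinite orbit.
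Yet for the all-$x_0$ point $x$ we get $(\gamma x)_0=ax_0=x_1$, so your proposed assignment is not invariant. Your Key case asserts $g_0x_0=x_0$ here, which is wrong.

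The Key case breaks exactly where the recursion $y_{h^jq}=g_{h^jq}\,y_{h^{j-1}q}$ enters the support of $g$. Before the support, $y$ is $x_0$. Inside it, $y_{h^jq}=g_{h^jq}\cdots g_{h^aq}x_0$, where $h^aq$ is the first support point along the cycle. It returns to $x_0$ only afterwards; this is Lemma \ref{Inf orb fix pt}. Note also that off $S$ the coordinates $g_q$ only lie in $\mathrm{Stab}_{G_0}(x_0)$; they need not be trivial.

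Your General case rests on this Key case, and it is only sketched anyway. In particular, elements of $\Gamma$ lying over the part of $Z$ that fixes $O$ pointwise still act nontrivially through their $G_0$-coordinates. So you cannot simply pass to $N/Z$.

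\textbf{What is missing} is the right candidate fixed point on infinite orbits and a mechanism showing it is $\Gamma$-invariant. The paper proceeds as follows.
\begin{enumerate}
\item Reduce, as you do, to $N$ nilpotent and $P$ a single infinite $N$-orbit.
\item Use Cornulier's Lemma \ref{lem: decent actions nilpotent} to find $t\in N$ all of whose orbits on $P$ are infinite. This is a nontrivial input your plan lacks.
\item Take a fixed point $z$ of $t$; it is unique by Lemma \ref{Inf orb fix pt}, and in general it is not the basepoint.
\item Induct along a normal series of $N$ with cyclic factors $G_{i+1}/G_i$, proving that $\Gamma_i^2$ fixes $z$ and that $G_iB_{f^2}$ is finite. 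Since $s=t^m$ (for suitable $m$) commutes with $f^2$ modulo $G_i$, one finds $n$ with $(s^nf^2)^M(p)\notin B_{f^2}$ for all $M\neq 0$. The persistent-fibre argument then forces $f^2$ to be biregular over $p$.
\item Since $\Gamma/\Gamma^2$ is finite, the orbit $\Gamma z$ is finite, so $\Gamma$ fixes a point.
\end{enumerate}
Your Steps 1 and 2 are correct, but they only handle the easy finite-orbit part. The content of the lemma lies entirely in the infinite orbits.
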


Note that we are able to remove the assumption of the existence of an element of $\mathrm{proj}_H(\Gamma)$ with only finitely many finite orbits on $P$. 

In another direction, we prove the following:

\begin{lem}
        Let $G_0$ be a group acting decently on $X_0$. Let $H$ be a group acting on $P$, and let $\Gamma < G^{\oplus}$ be finitely generated and acting purely elliptically  on $X$. 
        
        If $\mathrm{proj}_H(\Gamma) < H$ is trivial or has an abelian normal subgroup that contains an element with only finitely many finite orbits on $P$, then $\Gamma$ fixes a point of $X$. 
    \end{lem}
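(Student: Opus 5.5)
The plan is to localise the problem onto a finite, $\mathrm{proj}_H(\Gamma)$-invariant subset of $P$, and then to use decentness of $G_0 \curvearrowright X_0$ coordinatewise. Write $Q = \mathrm{proj}_H(\Gamma)$, $A \lhd Q$ for the abelian normal subgroup, and $a \in A$ for the element with only finitely many finite orbits on $P$. The case $Q$ trivial is exactly \cite[Lemma 4.7]{Algebraic} and is handled the same way: a finite generating set has finite total support $E \subset P$, and the problem reduces to $G_0^E$ acting on $X_0^E$. So assume $Q \neq 1$ and set $F_a$ to be the union of the finite $a$-orbits, a finite set.

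\emph{Step 1 (rigidity on infinite orbits).} Fix $\gamma_a \in \Gamma$ with $\mathrm{proj}_H(\gamma_a)=a$. I would show that every fixed point $y$ of $\gamma_a$ in $X$ has prescribed coordinates on $P \setminus F_a$. On an infinite $a$-orbit, the fixed-point equation reads $y_{a p} = (\gamma_a)_{ap}\, y_p$. Both $y$ and $\gamma_a$ have finite support, so $y_p = x_0$ far out in both directions along the orbit, and the recursion then determines $y$ on the whole orbit. Hence $\mathrm{Fix}(\gamma_a) = \{y^{a}\} \times Y_a$, where $y^a$ is a canonical configuration on $P\setminus F_a$ and $Y_a \subset X_0^{F_a}$. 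The same holds for every element of $\Gamma$ projecting to a conjugate $qaq^{-1}$, with $F_{qaq^{-1}} = qF_a$.

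\emph{Step 2 (a finite invariant set).} The conjugates $qaq^{-1}$, $q \in Q$, all lie in $A$ and therefore commute with $a$. I would use this commutation, together with the uniqueness in Step 1, to glue the canonical configurations $y^{qaq^{-1}}$ into a single configuration $z$ on $P \setminus E$. Here $E$ is the union over $q$ of the sets $qF_a$ (or their intersection, depending on which the argument supports), and $E$ is finite and $Q$-invariant. Commutation should force the canonical configurations to agree where they overlap, and finiteness of $E$ should follow from finite generation: $E$ is contained in finitely many translates of $F_a$ together with the finitely many supports of the generators. $E$ must also satisfy two further requirements. First, every $\Gamma$-fixed point, if one exists, must restrict to $z$ off $E$. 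Second, $\Gamma$ must preserve the subset $X_z = \{y : y|_{P\setminus E} = z\}$, which is nonempty and identified with $X_0^{E}$. I expect this step to be the main obstacle. Naively $Q$ need not preserve $F_a$, so one must exploit normality of $A$ carefully. If the gluing proves delicate, I would first pass to the finite-index subgroup of $\Gamma$ fixing the (finite) set of $a$-orbits in $F_a$ and then use item 1 to return to $\Gamma$.

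\emph{Step 3 (conclusion on a finite set).} Now $\Gamma$ acts on $X_z \cong X_0^{E}$ through $G_0 \wr \mathrm{Sym}(E)$. The kernel $\Gamma_0$ of the map to $\mathrm{Sym}(E)$ has finite index, so it is finitely generated. Each coordinate projection of $\Gamma_0$ to $G_0$ is finitely generated and purely elliptic on $X_0$, since each element of $\Gamma$ fixes a point of $X$, and that point lies in $X_z$ by Step 2. By item 2 of decentness for $G_0$, each coordinate projection fixes a point, so $\Gamma_0$ fixes a point of $X_0^E$. Then $\Gamma$ has a finite orbit on $X_z$. I would apply item 1 in the form of \cite[Lemma 4.2]{Algebraic}, which passes finite orbits to fixed points for restricted products, to obtain a $\Gamma$-fixed point in $X_z \subset X$.
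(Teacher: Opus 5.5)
\textbf{There is a genuine gap in Step 2.} Your Step 1 (essentially Lemma \ref{Inf orb fix pt}) and Step 3 are fine. Step 2, which you rightly flag as the crux, is left as a hope, and the mechanism you propose cannot work on its own.

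Once $E$ is finite and invariant, requiring that $\Gamma$ preserve $X_z$ is, modulo Step 3, the whole lemma. Commutation of projections in $H$ carries no information about the $G_0$-coordinates of lifts. Your notation $y^{qaq^{-1}}$ presupposes that the canonical configuration depends only on the projection, and that is exactly the nontrivial point. For example, take $P=\Z$ and $a$ the shift $p\mapsto p+1$. The lift of $a$ with all coordinates trivial and the lift with coordinate $h$ at $0$ and $h^{-1}$ at $1$ are both elliptic. Their unique fixed points are the constant configuration $x_0$ and the configuration equal to $hx_0$ at $0$ and $x_0$ elsewhere, which differ when $hx_0\neq x_0$. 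What rules such pairs out is pure ellipticity of \emph{mixed} products such as $\gamma_a^n f$. Your proposal only uses pure ellipticity coordinatewise on the finite set $E$, so it cannot close Step 2.

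\textbf{What is missing (this is how the paper argues).}

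First, $F_a$ is already $Q$-invariant, so no unions, intersections or finite-index passages are needed. For $g\in Q$ and $q\in F_a$ we have $\langle a\rangle gq=g\langle g^{-1}ag\rangle q$. Since $g^{-1}ag\in A$ commutes with $a$, it permutes the finitely many finite $a$-orbits, so $\langle g^{-1}ag\rangle q\subseteq F_a$ is finite and hence $gq\in F_a$. So take $E=F_a$: your Step 3 handles $X_0^{F_a}$, and $a$ has only infinite orbits on $P\setminus F_a$.

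Second, you must show that every $f\in\Gamma_A:=\Gamma\cap\mathrm{proj}_H^{-1}(A)$ fixes the canonical configuration $y^a$ on $P\setminus F_a$. This is the persistent-fibre argument of \cite[Lemma 4.7]{Algebraic}. Suppose $f$ were singular over some $p\notin F_a$. The image of $f$ commutes with $a$, the orbit $\langle a\rangle p$ is infinite, and the set $B_f$ of points over which $f^{\pm1}$ is singular is finite. Hence one can choose $n$ so that all intermediate points along the orbit of $p$ under $\gamma_a^n f$ avoid $B_f$. The composition rule for biregularity then shows that $\gamma_a^n f$ has persistent fibre over $p$, contradicting its ellipticity.

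Third, $A\triangleleft Q$ gives $\Gamma_A\triangleleft\Gamma$. So $\Gamma$ permutes the fixed-point set of $\Gamma_A$ in $X_0^{\oplus P\setminus F_a}$, which by your Step 1 is the single point $y^a$. Hence $\Gamma$ fixes $y^a$ off $F_a$, which is precisely the invariance of $X_z$ you needed, and Step 3 then finishes.
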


    As a corollary, we deduce the following for solvable projections:

    \begin{cor}
        \label{solvable lemma}
        If $\mathrm{proj}_H(\Gamma)$ has a normal series $G_1 < G_2 < \ldots < G_n = \mathrm{proj}_H(\Gamma)$ such that each $G_i/G_{i-1}$ is abelian and such that there exists $t \in G_1$ with only finitely many finite orbits on $P$, then $\Gamma$ fixes a point of $X$.
    \end{cor}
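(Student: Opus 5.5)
We derive the corollary from the preceding lemma, by induction along the normal series, but applied to the subgroup $\Gamma_1=\Gamma\cap \mathrm{proj}_H^{-1}(G_1)$ rather than to $\Gamma$ directly. The lemma only handles an abelian \emph{normal} subgroup of the projection, so we peel off the series from the top. Concretely, the plan is to show that the hypothesis of the lemma is met by $\mathrm{proj}_H(\Gamma)$ itself, provided we can exhibit an abelian normal subgroup of $\mathrm{proj}_H(\Gamma)$ containing an element with only finitely many finite orbits on $P$.

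\textbf{Step 1.} We interpret the normal series so that each $G_i$ is normal in $\mathrm{proj}_H(\Gamma)$ and $G_1$ is abelian (taking $G_0=\{1\}$, so that $G_1/G_0$ is abelian).

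\textbf{Step 2.} We note that $G_1$ is then an abelian normal subgroup of $\mathrm{proj}_H(\Gamma)$. By assumption it contains $t$, which has only finitely many finite orbits on $P$.

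\textbf{Step 3.} Since $\Gamma$ is finitely generated and acts purely elliptically, the preceding lemma applies verbatim and gives a fixed point of $\Gamma$ in $X$.

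The main obstacle is the reading of ``normal series''. If it means only a subnormal series, with each $G_{i-1}$ normal in $G_i$, then $G_1$ need not be normal in $\mathrm{proj}_H(\Gamma)$. In that case we would instead argue by induction on $n$, via the following route.

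\textbf{Step 4.} Let $\Gamma_i=\mathrm{proj}_H^{-1}(G_i)\cap\Gamma$. Since $\mathrm{proj}_H(\Gamma_i)=G_i$ and the series need not have finite-index terms, $\Gamma_i$ need not be finitely generated. This is a serious problem, because the lemma requires finite generation.

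\textbf{Step 5.} To bypass this, we would run the proof of the lemma directly. First we find the finite set $F\subset P$ of points lying in finite $\langle t\rangle$-orbits. Next we use normality of $G_1$ in $G_2$ to show that $G_2$ permutes the finite $\langle t\rangle$-orbit structure: for $g\in G_2$, the conjugate $gtg^{-1}$ lies in $G_1$ and commutes with $t$, so $g$ maps $F$ to the finite set of finite orbits of $gtg^{-1}$. Iterating this up the series, we would aim to show that $\mathrm{proj}_H(\Gamma)$ preserves a finite subset of $P$ built from $F$. We would then reduce to a fixed point problem on finitely many coordinates, using item 1 of decentness for $G_0$ together with the support argument used for the lemma.

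I expect the propagation of finiteness through the non-normal levels in Step 5 to be the hard step. With the standard reading of ``normal series'' (all $G_i\trianglelefteq \mathrm{proj}_H(\Gamma)$), the corollary is the immediate consequence of the lemma described in Steps 1--3.
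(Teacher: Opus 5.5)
Your Steps 1--3 are correct, and they are exactly how this corollary follows from the preceding lemma. With $G_0=\{1\}$ and every $G_i$ normal in $\mathrm{proj}_H(\Gamma)$ (the meaning the paper itself gives to ``normal series'' in the appendix), $G_1$ is an abelian normal subgroup containing $t$, so the lemma applies verbatim and the higher terms of the series play no role.

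Your Step~5 fallback is not needed, and the step you flag as hard is routine anyway. $G_1$ preserves the finite union $Q$ of finite $\langle t\rangle$-orbits, and if $G_i$ preserves $Q$ then so does $G_{i+1}$, because $\langle t\rangle gq=g\langle g^{-1}tg\rangle q\subset gQ$ for $g\in G_{i+1}$. After treating $Q$ as in the lemma, the fixed point $z$ of $t$ on $X_0^{\oplus P\setminus Q}$ is unique, and your $\Gamma_1$ fixes it by the elementwise abelian argument. Each $\Gamma_{i+1}$ then normalizes $\Gamma_i$, whose fixed set is $\mathrm{Fix}(\Gamma_i)=\{z\}$, and so also fixes $z$; finite generation of the $\Gamma_i$ is never used.
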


    We still do not know if Lemma \ref{lem: virtually nilpotent projection} generalizes to the case of when $\mathrm{proj}_H(\Gamma)$ is virtually solvable for arbitrary groups $H$ acting on arbitrary sets $P$. However, for the case of when $P = \mathbb{P}^2(k)$ over a field $k$ and $H = PGL_3(k) \curvearrowright P$, we are able to prove the following: 

    \begin{lem}
    \label{lem: virtually solvable projection}
    Let $G_0$ be a group acting decently on $X_0$. Let $P = \mathbb{P}^2(k)$ over a field $k$ and $H = PGL_3(k) \curvearrowright P$, and let $\Gamma < G^{\oplus}$ be finitely generated and act purely elliptically on $X$. If $\mathrm{proj}_H(\Gamma)$ is virtually solvable, then $\Gamma$ fixes a point of $X$.
\end{lem}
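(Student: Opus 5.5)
We reduce to the earlier results by the structure theory of virtually solvable subgroups of $PGL_3(k)$. As noted after Theorem \ref{thm: main thm}, we may take $k$ algebraically closed. Write $\bar\Gamma = \mathrm{proj}_H(\Gamma)$.

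The first step is a reduction to finite index. Pick a finite index solvable subgroup $\bar\Gamma_1<\bar\Gamma$. Its preimage $\Gamma_1<\Gamma$ has finite index, so it is finitely generated and still purely elliptic. We would like to show that $\Gamma_1$ fixes a point. Then the $\Gamma$-orbit of that point is finite, and item 1 of decentness for $G^{\oplus}\curvearrowright X$ gives a global fixed point; that item holds by \cite[Lemma 4.2]{Algebraic}. We may also replace $\bar\Gamma_1$ by its intersection with the identity component of its Zariski closure, which is again of finite index. So we may assume the Zariski closure of $\bar\Gamma$ is a connected solvable group. By Lie--Kolchin it is then conjugate into the image $B$ of the upper triangular matrices in $PGL_3(k)$. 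Hence $\bar\Gamma$ fixes a flag $p_0\in\ell_0$ in $\mathbb{P}^2(k)$.

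Next we split into cases according to the unipotent radical and the diagonal part. Let $U$ be the image of the unitriangular matrices and let $A=\bar\Gamma\cap U$, which is nilpotent and normal in $\bar\Gamma$. If $\bar\Gamma/A$ is finite, then $\bar\Gamma$ is virtually nilpotent, and Lemma \ref{lem: virtually nilpotent projection} applies directly. Otherwise some $t\in\bar\Gamma$ has an infinite-order diagonal part $\mathrm{diag}(a,b,c)$.

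The aim in this case is to produce a normal abelian subgroup of $\bar\Gamma$ containing an element with only finitely many finite orbits on $P$, so that the preceding Lemma (or Corollary \ref{solvable lemma}) applies. The generic situation is when the eigenvalue ratios of such a $t$ are of infinite order and "independent". Then $t$ has only finitely many finite orbits on $P$: the finite orbits are contained in the fixed points of powers of $t$, which are finitely many points and lines. The difficulty is that $t$ need not lie in an abelian normal subgroup. We would therefore use the derived series $\bar\Gamma^{(i)}$, which lands in $U$ after one step, and the series of Corollary \ref{solvable lemma} ending at $\bar\Gamma$. We would try to choose the bottom term $G_1$ abelian normal and containing such an element.

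When no such element exists, the finite orbits of every element lie on the invariant line $\ell_0$ or form pencils through $p_0$. This happens for instance when $a/b$ is a root of unity, or when some element acts as a homology or elation. In that case we restrict attention to the action on $\ell_0\cong\mathbb{P}^1$ and on the pencil of lines through $p_0\cong\mathbb{P}^1$. Here we would adapt the argument of \cite[Theorem 1.10]{Algebraic} for $PGL_2$, splitting $P$ into $\bar\Gamma$-invariant pieces $\{p_0\}$, $\ell_0\setminus\{p_0\}$ and $P\setminus\ell_0$. We would then find a fixed point coordinatewise on each piece and glue, since the restricted product decomposes as a product over invariant subsets.

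The main obstacle is this last degenerate case. There, elements have infinitely many finite orbits, for example a homology fixing a line pointwise, so the "finitely many finite orbits" hypothesis fails. We must then handle infinitely many fixed coordinates directly, using finite generation to bound supports: only finitely many coordinates of the generators are nontrivial.
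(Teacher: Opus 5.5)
Your opening matches the paper: pass to a finite index subgroup (using item 1 of decentness), take the identity component of the Zariski closure, and apply Lie--Kolchin to get a $\bar\Gamma$-fixed point (indeed a fixed flag $p_0\in\ell_0$). After that, however, the proposal is not a proof. The case analysis stops at ``we would try'' and ``we would adapt'', and the split itself does not cover all groups. For example, over $\mathbb{Q}$ take $t=\mathrm{diag}(2,3,1)$ and the elation $u=I+E_{13}$. Then $\bar\Gamma=\langle t,u\rangle=\langle t\rangle\ltimes V$ with $V=\{I+fE_{13}: f\in\mathbb{Z}[1/2]\}$, so $\bar\Gamma\cong BS(1,2)$, which is not virtually nilpotent. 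Here $t$ has only finitely many finite orbits. But for $v\in V$ and $m\neq 0$ one computes $u(t^mv)u^{-1}=t^mv\,(I+(2^{-m}-1)E_{13})$. So any normal subgroup containing $t^mv$ contains a nontrivial element of $V$ that does not commute with $t^mv$. Hence every abelian normal subgroup lies in $V$, whose nontrivial elements fix a line pointwise. Neither Lemma \ref{normal nilpotent} nor Corollary \ref{solvable lemma} (with an abelian bottom term) applies, and this group is not in your degenerate branch either, since such an element $t$ exists. Everything therefore rests on your last step, which as written is not an argument. The set $P\setminus\ell_0$ is not a copy of $\mathbb{P}^1$: $\bar\Gamma$ permutes the lines through $p_0$ while acting along them. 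So there is no single $PGL_2$-action to which Theorem \ref{thm: Algebraic main theorem} can be applied ``coordinatewise'', and bounding supports by finite generation does not address this.

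The missing idea is that one $\bar\Gamma$-fixed point $u$ already suffices, with no case distinction; this is Lemma \ref{lem: virtual fixed point on P implies elliptic}. The coordinate at $u$ is handled by decentness of $G_0$, applied to the image of $\Gamma$ under the $u$-coordinate homomorphism, which is finitely generated and purely elliptic. For $P\setminus u$, fibre over the pencil $L\cong\mathbb{P}^1$ of lines through $u$. Choose for each $l\in L$ a projective coordinate $B_l\colon l\to\mathbb{P}^1$ with $B_l(u)=e_0$. This identifies $X_0^{\oplus P\setminus u}$ with the iterated restricted product $\prod^r_L\big(\prod^r_{\mathbb{P}^1\setminus e_0}X_0\big)$. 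The identification is equivariant for an injective homomorphism of $\Gamma$ into $\prod^r_L\big(\prod^r_{\mathbb{P}^1\setminus e_0}G_0\rtimes\mathrm{Stab}_{PGL_2(k)}(e_0)\big)\rtimes\bar\Gamma$, where $h\in\bar\Gamma$ contributes $h_l=B_l\,h\,B_{h^{-1}(l)}^{-1}$ on the fibre over $l$. One application of Theorem \ref{thm: Algebraic main theorem}, after extending by $x_0$ and $1$ at $e_0$, shows that the fibre action is decent. A second application over $L$, with this fibre action as the new base action, gives a fixed point. This treats homologies, elations and non-nilpotent groups like the $BS(1,2)$ above uniformly, and makes the nilpotent and abelian-normal lemmas unnecessary.
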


We deduce Lemma \ref{lem: virtually solvable projection} as a corollary of the following more general condition on $\mathrm{proj}_H(\Gamma)$ acting on $P$ and Borel's theorem (see, for instance, \cite[14.65]{KapDrutu}). 

\begin{lem}
    \label{lem: virtually fixed point}
    Let $G_0$ be a group acting decently on $X_0$. Let $P = \mathbb{P}^2(k)$ over a field $k$ and $H = PGL_3(k) \curvearrowright P$, and let $\Gamma < G^{\oplus}$ be finitely generated and act purely elliptically on $X$. If $\mathrm{proj}_H(\Gamma)$ has a finite index subgroup that fixes a point of $P$, then $\Gamma$ fixes a point of $X$.
\end{lem}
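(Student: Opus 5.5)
The plan is to reduce first to the case where $\mathrm{proj}_H(\Gamma)$ itself fixes a point of $P$, then to split off that point, and finally to use the pencil of lines through it to reduce to a one-dimensional situation governed by Theorem \ref{thm: Algebraic main theorem}.

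\emph{Reduction to a fixed point.} Let $\Gamma'$ be the preimage in $\Gamma$ of the finite index subgroup of $\mathrm{proj}_H(\Gamma)$ that fixes a point. Then $\Gamma'$ has finite index in $\Gamma$, so it is finitely generated, and it still acts purely elliptically on $X$. Suppose we show that $\Gamma'$ fixes some $y \in X$. Then the $\Gamma$-orbit of $y$ is finite. By \cite[Lemma 4.2]{Algebraic}, item 1 of Definition \ref{def: decent action} holds for $G^{\oplus} \curvearrowright X$, so $\Gamma$ fixes a point. Hence we may assume that $\mathrm{proj}_H(\Gamma)$ fixes a point $q \in P = \mathbb{P}^2(k)$. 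As explained after Theorem \ref{thm: main thm}, we may also assume $k$ is algebraically closed.

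\emph{Splitting off $q$.} Since every $h \in \mathrm{proj}_H(\Gamma)$ fixes $q$, the formula $(\gamma x)_q = g_q x_{h^{-1}q} = g_q x_q$ shows that $\gamma = ((g_p), h) \mapsto g_q$ is a homomorphism $\Gamma \to G_0$. The induced action on the $q$-coordinate is exactly this action of the image on $X_0$. The image is finitely generated, and it acts purely elliptically: a fixed point of $\gamma$ in $X$ restricts to a fixed point of $g_q$ in $X_0$. Since $G_0 \curvearrowright X_0$ is decent, the image fixes some $y_q \in X_0$. We therefore only need a $\Gamma$-fixed point of the restricted product over $P \setminus \{q\}$; combining it with $y_q$ gives a fixed point in $X$.

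\emph{Using the pencil through $q$.} The set $P \setminus \{q\}$ fibres over the pencil of lines through $q$, which is a copy of $\mathbb{P}^1(k)$. Each fibre $\ell \setminus \{q\}$ is an affine line, and $\mathrm{Stab}_H(q)$ acts through a homomorphism to $PGL_2(k)$ on the pencil and by affine maps between fibres. I want to regroup coordinates line by line. Set $X_\ell = \bigoplus_{p \in \ell\setminus\{q\}} (X_0,x_0)$ and let $G_\ell$ be the restricted product of $G_0$ over the affine line extended by $\mathrm{Aff}(\mathbb{A}^1)$. Then $G_\ell$ embeds in the group of Theorem \ref{thm: Algebraic main theorem}, with the $\infty$-coordinate trivial. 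Since a subgroup of a group acting decently acts decently, each $G_\ell \curvearrowright X_\ell$ is decent. The aim is to view $\Gamma$ as a finitely generated, purely elliptic subgroup of a restricted product over $\mathbb{P}^1$ with factors $(G_\ell, X_\ell)$ and acting group $PGL_2(k)$, and then to invoke Theorem \ref{thm: Algebraic main theorem}. That theorem is stated for arbitrary $H = PGL_2$ over any field, including the non-algebraically closed case as noted after it.

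\emph{Main obstacle.} The difficulty is the last identification. An element of $\mathrm{Stab}_H(q)$ does not act on the fibres by a fixed identification with $\mathbb{A}^1$. Its fibre-wise affine part varies with the line, so its $G_\ell$-components are nontrivial on infinitely many lines and do not lie in the restricted product. To handle this, I would fix coordinates with $q = [0:0:1]$, so that $\mathrm{Stab}_H(q)$ is the image of the matrices with last column $(0,0,\ast)^T$. I would then try to show that the fibre maps are given by a cocycle that is trivial up to a uniform correction: on the line $[x:y]$ the map is $z \mapsto (az + bx + cy)/\lambda$, which is a single global formula. One can then pass to the enlarged group in which the affine part is allowed as a global (non-finitely supported) twist. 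The reason this should be harmless is that only the finitely many lines carrying a nontrivial $g_p$ for one of the finitely many generators interact with $G_0$. If this bookkeeping does not fit Theorem \ref{thm: Algebraic main theorem} directly, the fallback is to rerun its proof in this fibred setting. That means splitting according to $\mathrm{proj}_{PGL_2}(\Gamma)$ being finite, virtually solvable, or containing a loxodromic element with finitely many finite orbits on the pencil. In that analysis, Lemma \ref{lem: virtually nilpotent projection} and Corollary \ref{solvable lemma} handle the solvable cases.
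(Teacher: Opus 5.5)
Your outline follows the paper's route exactly. You pass to the finite-index preimage and use item 1 of decency for $G^{\oplus}$. You split off the fixed point $q$ via $\gamma\mapsto g_q$ and regroup $P\setminus\{q\}$ along the pencil $L\cong\mathbb{P}^1$ of lines through $q$. You get decency of $G_\ell=\prod^r_{\mathbb{P}^1\setminus e_0}G_0\rtimes\mathrm{Stab}_{PGL_2(k)}(e_0)$ on $X_\ell$ from Theorem \ref{thm: Algebraic main theorem}, and then apply that theorem once more over $L$. The gap is that you never carry out this last step. You call it the main obstacle and end with a plan (\emph{try to show}, \emph{fallback}), so as written the proposal does not prove the lemma. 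The fallback is not a viable substitute. Lemma \ref{lem: virtually nilpotent projection} and Corollary \ref{solvable lemma} only treat special projections. The case where the image in $PGL(L)$ is not virtually solvable would require redoing the whole dynamical argument of \cite{Algebraic} in a fibred setting.

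The missing observation is that the obstacle does not exist. $G^{\oplus}$ is the subgroup of the \emph{unrestricted} wreath product that preserves the restricted product. So a component only has to fix the basepoint for all but finitely many indices; it does not have to be trivial. Fix once and for all, for every line $\ell\ni q$, a projective chart $B_\ell:\ell\to\mathbb{P}^1$ with $B_\ell(q)=e_0$. Set $h_\ell=B_\ell\circ h|_{h^{-1}(\ell)}\circ B_{h^{-1}(\ell)}^{-1}\in\mathrm{Stab}_{PGL_2(k)}(e_0)$. The basepoint of $X_\ell$ is the constant vector $\vec{x}=(x_0,x_0,\ldots)$, which $h_\ell$ merely permutes. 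Hence $(\vec g_\ell,h_\ell)\vec x=\vec x$ as soon as every entry of $\vec g_\ell$ lies in $\mathrm{Stab}_{G_0}(x_0)$. This holds for every line avoiding the finitely many points $p$ with $g_p\notin\mathrm{Stab}_{G_0}(x_0)$ for the generators.

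So the line-dependent affine parts are harmless, no \emph{uniform correction} is needed, and any choice of charts works. The homomorphism property reduces to the cocycle identity $(hh')_\ell=h_\ell h'_{h^{-1}(\ell)}$, which is immediate from the definition. The regrouping bijection $X_0^{\oplus P\setminus\{q\}}\to\prod^r_L X_\ell$ is then equivariant. Consequently $\Gamma$ maps to a finitely generated, purely elliptic subgroup of $\prod^r_L G_\ell\rtimes PGL(L)$, and Theorem \ref{thm: Algebraic main theorem} gives the fixed point directly.

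One smaller point: \emph{a subgroup of a group acting decently acts decently} is not quite enough for $G_\ell\curvearrowright X_\ell$. Here $X_\ell$ is only the invariant slice of $\prod^r_{\mathbb{P}^1}X_0$ with $e_0$-coordinate $x_0$. You also need that the $e_0$-coordinate decouples, since the group components there are trivial and $e_0$ is fixed. Then a fixed point found in the larger space can be reset to $x_0$ at $e_0$.
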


\subsection{Organization of the paper}

This paper is organized into two main sections. In the first section, we cover the necessary background and preliminary notions. In particular, in Section \ref{res prod} we define \emph{restricted products} and their automorphism groups and in Section \ref{biregularity}, we discuss the concepts of \emph{biregularity} and \emph{singularity}. Then in Section \ref{Top dynamics Pn}, we review the topology and metric structure of the projective space $\mathbb{P}^n(\K)$ over a normed field $\K$, and define the notion of \emph{proximal} and \emph{very proximal} automorphisms of projective space. We will need to use these notions in Section \ref{sec: NSD element}.  

We prove Theorem \ref{thm: main thm} in Section \ref{sec: proof of main theorem}. The proof of Theorem \ref{thm: main thm} is divided into two main cases depending on whether the projection $G$ of the acting group $\Gamma$ onto $PGL_3 (k)$ has a very proximal element or not. The case of the existence of a very proximal element is handled in Section \ref{sec: NSD element} and in the case of no very proximal elements, we prove that $G$ must have a finite index subgroup which fixes a point of $\mathbb{P}^2(\K)$ for some extension $\K$ of a finitely generated subfield $K$ of $k$. This latter case is handled in Section \ref{section: fixed subspace}, where we prove Lemma \ref{lem: virtually fixed point}. 

\subsection{Acknowledgements} I am thankful to Piotr Przytycki for introducing me to the subject of this paper and for his unwavering help and guidance throughout the duration of the project. I thank also Christian Urech for carefully listening to the arguments in this paper and for suggesting possible applications to the proof of Theorem \ref{thm: main thm}. I would also like to thank Yves Cornulier for providing the proof of Lemma \ref{lem: decent actions nilpotent}.

\section{Preliminaries}

\subsection{Restricted products}
\label{res prod}

\begin{defn}
    \label{restricted product}
    Let $P$ be a set. For each $p \in P$, let $X_p$ be a set with a base point $x_p$.  The \textbf{restricted product of $(X_p, x_p)$} is the set: 

    $$X = \oplus_{p \in P}(X_p, x_p) := \{(y_p)_p \in \prod_P X_p : y_p = x_p \text{ for all but finitely many } p\}$$
\end{defn}

In this paper, we will specialize to the case of when the based spaces $(X_p, x_p)$ are all one fixed based space $(X_0, x_0)$. In this case, we will denote $X_0^{\oplus P} = \oplus_{p \in P}(X_0, x_0)$ when we do not wish to distinguish the basepoint $x_0$, and we will also sometimes use the notation $\prod^r_P X_0$ for the restricted product $X_0^{\oplus P}$.

Suppose that $G_0, H$ are groups such that  $H \curvearrowright P$ and $G_0 \curvearrowright X_0$. Then the (unrestricted) wreath product: 

    $$G_0 \wr_P H := \prod_P G_0 \rtimes H$$

    acts on $\prod_P X_0$ via, for each $(x_p)_p \in \prod_P X_0$ and each $g \in \prod_P G_0, h \in H$: 

    $$(ghx)_p = g_p x_{h^{-1}p}$$

    Denote $G^{\oplus}$ the subgroup of $G_0 \wr_P H$ preserving $X$. We will also denote $$\prod_P^r G_0 = \{(g_p)_p \in \prod_P G_0 : g_p \in \mathrm{Stab}_{G_0}(x_0) \ \text{for all but finitely many $p \in P$}\}$$

Note that for any $\Gamma < G_0 \wr_P H$, we have a group homomorphism $\psi: \Gamma \to H$ given by projecting onto the $H$ factor in the semi-direct product $\prod_P G_0 \rtimes H$. We will denote the image of $\Gamma$ in $H$ by $\mathrm{proj}_H(\Gamma)$. 

Similarly, note that any $\Gamma < G_0 \wr_P H$ acts on $P$ via the $H$ coordinate. For any $p \in P$, the projection $\mathrm{Stab}_{\Gamma}(p) \to G_0$ of the stabilizer in $\Gamma$ of $p$ onto the $p$th coordinate in the product $\prod_P G_0$ is a group homomorphism. 

We record the following lemma, which we will use in Section \ref{sec: NSD element} and Section \ref{sec: virtually nilpotent case}. 

\begin{lem}
    \label{Inf orb fix pt}

    Let $t \in G^{\oplus}$ and let $O$ be an infinite orbit of $t$ on $P$. Then for any fixed points $y,z$ of $t$ in $X$, we have $y_p = z_p$ for each $p \in O$. 
    
\end{lem}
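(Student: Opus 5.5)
The argument uses only the finiteness of supports in the restricted product. Write $t = gh$ with $g = (g_p)_p \in \prod_P G_0$ and $h \in H$. The action of $t$ on $P$ is through $h$, so $O = \{h^n p_0 : n \in \Z\}$ for some $p_0$, and since $O$ is infinite the points $h^n p_0$ are pairwise distinct.

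First I would record the action of powers of $t$. Iterating the formula $(tx)_p = g_p x_{h^{-1}p}$ gives, for $n \geq 1$,
$$(t^n x)_p = g_p\, g_{h^{-1}p} \cdots g_{h^{-(n-1)}p}\; x_{h^{-n}p}.$$
Write $c_n(p) \in G_0$ for the product in front. The key observation is that $c_n(p)$ does not depend on $x$. Hence if $y$ and $z$ are both fixed by $t$, then for every $n \geq 1$ and $p \in P$ we get $y_p = c_n(p)\, y_{h^{-n}p}$ and $z_p = c_n(p)\, z_{h^{-n}p}$.

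Next I would use that $y, z \in X$. Both agree with $x_0$ outside a finite set $F \subset P$. Fix $p \in O$. The points $h^{-n}p$ for $n \geq 1$ are pairwise distinct elements of $O$, so only finitely many of them lie in $F$. Choose $n \geq 1$ with $h^{-n}p \notin F$. Then $y_{h^{-n}p} = x_0 = z_{h^{-n}p}$, and so $y_p = c_n(p)\, x_0 = z_p$.

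The only step needing any care is the bookkeeping of the iterated formula, specifically checking that the coefficient $c_n(p)$ is the same group element for $y$ and for $z$. This is immediate from the semidirect product multiplication, and induction on $n$ handles it. I do not expect any real obstacle.
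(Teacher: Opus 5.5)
Your proof is correct, and it takes a slightly different, leaner route than the paper.

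The paper works with a single fixed point $z$. It uses that $t = gh \in G^{\oplus}$ forces $g_q \in \mathrm{Stab}_{G_0}(x_0)$ for all but finitely many $q$. It then re-indexes $O \cong \Z$ so that the $g$-coordinates lie in this stabilizer along the whole backward tail, and deduces that $z$ equals $x_0$ on that tail. Finally it obtains, after re-indexing, the explicit formula $z_{h^n p} = g_{h^n p} \cdots g_{hp}\, x_0$ for $n > 0$. So the coordinates of any fixed point on $O$ are a function of $g$ and $x_0$ alone.

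You instead compare two fixed points directly. The coefficient $c_n(p)$ is common to both, and choosing $n$ with $h^{-n}p$ outside the union of the two finite supports gives $y_p = c_n(p)\,x_0 = z_p$. This needs only that $y$ and $z$ have finite support. It never uses the restrictedness of $g$, so it would work for any $t$ in the unrestricted wreath product fixing $y$ and $z$, and it skips the normalization step.

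What the paper's version gives in exchange is an explicit description of the common values on $O$, and the fact that they equal $x_0$ along the backward tail. That is more information than the lemma requires.
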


\begin{proof}
    Let $z$ be a fixed point of $t = gh$ in $X$ for $g \in \prod_P G_0$ and $h \in H$. We will show that the coordinates of $z$ in $O$ are uniquely determined. 

    Fixing $p \in O$, we can enumerate the elements of $O$ as $O = \{t^n p : n \in \Z\}$. Since $O$ is infinite, the map $\Z \ni n \mapsto t^n p$ is a bijection, we can identify $O$ with $\Z$ and write the $t$-action on $O$ as $t^n i = i + n$ for any $i,n \in \Z$. We will also write $z_{t^n p} (= z_{h^n p})$ as $z_n$ and $g_{t^n p} (= g_{h^n p})$ as $g_n$.

    Since $t(z) = z$, we have for each $n \in \Z$ that $g_n z_{n-1} = z_n$. Denote $S = \mathrm{Stab}_{G_0}(x_0)$. Since $t \in G^{\oplus}$, there exists $N \leq M$ in $\Z$ such that $g_n \in S$ for all $n \leq N$ and for all $n \geq M$. Up to re-indexing $O$, we can assume that $N = 0$. 

    We claim that $z_n = x_0$ for all $n \leq 0$. Indeed, since $z \in X$, there exists $K \leq 0$ such that for all $n \leq K$, $z_n = x_0$. By induction, for each $K \leq n \leq 0$, we have that 
    
    $$z_n = g_nz_{n-1} = g_n g_{n-1} z_{n-2} = \cdots = g_n g_{n-1} \cdots g_K z_K$$
    
    Since $g_i \in S$ for all $i \leq 0$, the latter expression is in $S \{z_K\} = S \{x_0\} = \{x_0\}$. Therefore, $z_n = x_0$ for all $K \leq n \leq 0$, and hence for all $n \leq 0$ (since $K$ was chosen such that $z_n = x_0$ for all $n \leq K$). Note that a symmetric argument yields that $z_n = x_0$ for all $n \geq M$, but we will not need this.

    Now for each $n > 0$, by induction we have: 

    $$z_n = g_n z_{n-1} = g_n g_{n-1} z_{n-2} = \cdots = g_n g_{n-1} \cdots g_1 z_0 = g_n g_{n-1} \cdots g_1 x_0$$

    Thus, the coordinates of $z$ on $O$ depend only on $g$ and $x_0$, hence are uniquely determined.

\end{proof}

\subsection{Biregularity}
\label{biregularity}

We review the notion of biregularity of maps as defined in \cite[Section 4.1]{Algebraic}.

Let $(X_0,x_0)$ be a based set and let $X$ be the restricted product of $(X_0, x_0)$ over a set $P$. Let $G_0 \curvearrowright X_0$ and $H \curvearrowright P$, and $G^{\oplus}$ the subgroup of $G_0 \wr_P H$ preserving $X$. 

\begin{defn}
    Fix $z \in X$. We say that $f \in G^{\oplus}$ is \textbf{biregular} over $p \in P$ with respect to $z$ if $f(z)_{f(p)} = z_{f(p)}$. Equivalently, for $f = (g,h)$ with $g \in \prod_P G_0$ and $h \in H$, we have $g_p(z_{h^{-1}(p)}) = z_p$. Otherwise, we say that $f$ is \textbf{singular} over $p$ with respect to $z$. 

    An element $f \in G^{\oplus}$ has \textbf{persistent fibre} over $p \in P$ if there exists $l \geq 1$ such that for all $n \geq l$, we have that $f^n$ is singular over $p$ with respect to $z$ and $f^{-n}$ is biregular over $p$ with respect to $z$.
    
\end{defn}

\begin{rem}(\cite[Remark 4.5]{Algebraic})
    A map $f \in G^{\oplus}$ fixes a point $z \in X$ if and only if $f$ is biregular over each $p \in P$ with respect to $z$. 
\end{rem}

\begin{rem}(\cite[Remark 4.6]{Algebraic})
    If $f \in G^{\oplus}$ has persistent fibre over $p \in P$, then it does not fix a point of $X$. 
\end{rem}

\subsection{Topology and dynamics on $\mathbb{P}^n$}
\label{Top dynamics Pn}

In this section, we review the canonical topological and metric structure of the projective space $P = \mathbb{P}^n(\K)$ over a normed field $\K$ and study various dynamical behaviour of automorphisms of $\mathbb{P}^n$, introducing the notions of \emph{proximal} and \emph{very proximal} elements. We follow \cite[Section 2.9]{KapDrutu}. 

We will typically work with \emph{local} fields $\K$, that is, fields equipped with a norm whose associated metric induces a locally compact topology on $\K$. The norm on $\K$ induces a product norm on $V = \K^{n+1}$. If $\K$ is archimedean (i.e.\ $\{\vert n \cdot 1_{\K} \vert : n \in \N\}$ is unbounded, equivalently $\K = \R$ or $\K = \C$), then we equip $V$ with the $\ell^2$ norm: 

$$\vert (x_1,\ldots,x_{n+1}) \vert = \sqrt{\vert x_1\vert^2 + \cdots + \vert x_{n+1} \vert^2}$$

For each $x = (x_1,\ldots,x_{n+1}) \in V$. 

Otherwise, if $\K$ is non-archimedean (i.e.\ $\{\vert n \cdot 1_{\K} \vert: n \in \N\}$ is bounded), then we equip $\K$ with the $\ell^{\infty}$ norm: 

$$\vert (x_1,\ldots,x_{n+1}) \vert = \max \{\vert x_1 \vert, \ldots, \vert x_{n+1} \vert\}$$

for each $x = (x_1,\ldots,x_{n+1}) \in V$. 

If $\K$ is archimedean, we equip the wedge product $V \wedge V: = (V \otimes V)/\Span(v \otimes w - w \otimes v : v,w \in V)$ with the Euclidean norm by declaring the basis vectors $e_i \wedge e_j$ to be orthonormal, so that: 

$$\vert v \wedge w \vert^2 = \vert v \vert^2 \vert w \vert^2 - \vert \langle v,w \rangle \vert^2 $$

for each $v,w \in V$, where $\langle \cdot, \cdot \rangle$ denotes the standard Euclidean/Hermitian inner product on $V$. 

If $\K$ is non-archimedean, we equip $V \wedge V$ with the max norm, so that: 

$$\vert v \wedge w \vert = \max_{i<j} \{\vert x_i y_j - x_j y_i \vert \}$$

for $v = (x_1,\ldots,x_{n+1}), w=(y_1,\ldots,y_{n+ 1})$. 

We then define the following distance function on $\mathbb{P}^n$, denoting $[v]$ the equivalence class in $\mathbb{P}^n = \mathbb{P}(V)$ of a vector $v \in V$: 

\begin{defn}
    The \textbf{chordal metric} on $\mathbb{P}^n = \mathbb{P}(V)$ is defined by: 

    $$d([v], [w]) = \frac{\vert v \wedge w \vert}{\vert v \vert \vert w \vert}$$
\end{defn}

By \cite[Theorem 2.77]{KapDrutu}, $d$ is a metric on $P$ whose associated topology is the quotient topology on $P$ induced from $V \setminus \{0\}$ and the quotient map $V \setminus \{0\} \ni v \mapsto [v]$ (see \cite[Exercise 2.78]{KapDrutu}). We will not need to explicitly use this metric, however, we still record its definition for the sake of completeness. 

We now discuss a class of well-behaved automorphisms of $P$, the \emph{proximal} and \emph{very proximal} automorphisms. Recall that every $g \in GL(V)$ induces a projective automorphism of $P$, and that the group of projective automorphisms is the projective linear group $Aut(P) = PGL(V) = GL(V) / \langle \lambda I : \lambda \in \K\setminus \{0\}\rangle$. 

Given $[g] \in PGL(V)$, we will frequently abuse notation and identify $[g]$ with its lift $g \in GL(V)$. 

\begin{defn}
    Let $\K$ be a normed field. An element $t \in PGL_n(\K)$ is called \textbf{proximal} if $t$ has a unique eigenvalue $\lambda_+$ of algebraic multiplicity 1 with maximum absolute value, called the \textbf{dominant eigenvalue} of $t$. An element $t \in PGL_n(k)$ is \textbf{very proximal} if $t$ and $t^{-1}$ are both proximal, in which case $t$ has unique maximal and minimal absolute value eigenvalues $\lambda_+$ and $\lambda_-$ (both of algebraic multiplicity 1).  
\end{defn}




\section{Proof of Theorem \ref{thm: main thm}}
\label{sec: proof of main theorem}


\subsection{The case of a fixed point in $P = \mathbb{P}^2(k)$}

\label{section: fixed subspace}

Suppose that $G_0 \curvearrowright X_0$ is decent, $P = \mathbb{P}^2(k)$ over an arbitrary field $k$ and $H = PGL_3(k)$. Let $\Gamma < G^{\oplus}$ be finitely generated and act purely elliptically on $X = \prod^r_P X_0$. Denote $G = \mathrm{proj}_H(\Gamma)$. In this section we prove Theorem \ref{thm: main thm} in the case that $G$ virtually fixes a point of $P$. 

\begin{lem}
    \label{lem: virtual fixed point on P implies elliptic}
    Suppose that $G$ has a finite index subgroup that fixes a point of $P$. Then $\Gamma \curvearrowright X$ is elliptic. 
\end{lem}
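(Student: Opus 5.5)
The plan is to reduce to the case where $G$ itself fixes a point $q\in P$. Then I would view $P\setminus\{q\}$ as a family of affine lines parametrised by $\mathbb{P}^1$, and apply Theorem \ref{thm: Algebraic main theorem} twice.

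\textbf{Reduction.} Let $G'<G$ be a finite index subgroup fixing $q\in P$, and let $\Gamma'$ be its preimage in $\Gamma$. Then $\Gamma'$ has finite index in $\Gamma$, so it is finitely generated, and it is purely elliptic. Suppose we show that $\Gamma'$ fixes some $z\in X$. Then the $\Gamma$-orbit of $z$ is finite. By item 1 of Definition \ref{def: decent action} for $G^{\oplus}\curvearrowright X$ (which holds by \cite[Lemma 4.2]{Algebraic}), $\Gamma$ then fixes a point. So we may assume $G$ fixes $q$.

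\textbf{Splitting off the coordinate $q$.} Since $G$ fixes $q$, we can write $X = X_0 \times \prod^r_{P\setminus\{q\}} X_0$, and $\Gamma$ preserves this decomposition. The $q$-th coordinate map $\Gamma=\mathrm{Stab}_\Gamma(q)\to G_0$ is a homomorphism. Its image is finitely generated and acts purely elliptically on $X_0$: a fixed point in $X$ of an element projects to a fixed point of its $q$-coordinate. Since $G_0\curvearrowright X_0$ is decent, this image fixes some $x\in X_0$.

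\textbf{The part $P\setminus\{q\}$.} Let $\Lambda\cong\mathbb{P}^1(k)$ be the pencil of lines through $q$. Then $P\setminus\{q\}=\bigsqcup_{L\in\Lambda}(L\setminus\{q\})$, and each $L\setminus\{q\}$ is an affine line. For each $L$, fix an affine coordinate $L\setminus\{q\}\cong \mathbb{A}^1(k)$. Any $h\in\mathrm{Stab}_{PGL_3}(q)$ maps $L$ to $h(L)$, and in these coordinates $h|_{L\setminus\{q\}}$ is an affine map $\mathbb{A}^1\to\mathbb{A}^1$, because projective maps of lines fixing $q$ are affine off $q$.

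Set $Y_0=\prod^r_{\mathbb{A}^1}X_0$ with basepoint the constant point at $x_0$, and let $G_1 = (\prod^r_{\mathbb{A}^1}G_0)\rtimes \mathrm{Aff}(\mathbb{A}^1)$ be the corresponding group acting on $Y_0$. Then $\prod^r_{P\setminus\{q\}}X_0 = \prod^r_{\Lambda} Y_0$. Moreover, $\Gamma$ acts through $G_1^{\oplus}=(\prod^r_\Lambda G_1)\rtimes PGL_2(k)$. To check this, note that only finitely many points of $P$ carry an entry outside $\mathrm{Stab}_{G_0}(x_0)$. Hence for all but finitely many $L$, the $L$-entry of an element of $\Gamma$ lies in $\mathrm{Stab}_{G_1}(y_0)$.

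Next, $G_1\curvearrowright Y_0$ is decent. We embed it equivariantly into the $\mathbb{P}^1$-restricted product over $\mathbb{P}^1=\mathbb{A}^1\cup\{\infty\}$, putting $x_0$ and the identity at $\infty$, exactly as in the field-extension embedding discussed after Theorem \ref{thm: Algebraic main theorem}. Theorem \ref{thm: Algebraic main theorem} then gives a fixed point upstairs. Resetting its $\infty$-coordinate to $x_0$ keeps it fixed, since $\mathrm{Aff}$ fixes $\infty$ and the entry there is trivial, and this gives a fixed point in $Y_0$. Item 1 of decency comes from \cite[Lemma 4.2]{Algebraic}.

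Now the image of $\Gamma$ in $G_1^{\oplus}$ is finitely generated and purely elliptic on $\prod^r_\Lambda Y_0$, since fixed points project. A second application of Theorem \ref{thm: Algebraic main theorem}, with base $(Y_0,y_0)$ and $\Lambda\cong\mathbb{P}^1$, yields a fixed point $w$. The point $(x,w)$ is then fixed by $\Gamma$.

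\textbf{Main obstacle.} The step I expect to need the most care is the bookkeeping that identifies $\Gamma$ with a subgroup of $G_1^{\oplus}$. This means checking that the chosen affine coordinates on the lines make the transition maps genuinely affine. It also means checking that the restrictedness conditions, for both the $G$-entries and the points, transfer correctly to the two-level restricted product.
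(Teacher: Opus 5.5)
Your proposal is correct and follows essentially the same route as the paper. Both arguments reduce to $G$ fixing a point and handle that coordinate by decency of $G_0$. Both then fibre $P\setminus\{q\}$ over the pencil of lines through $q$, each fibre being an affine line: this is the paper's $\mathbb{P}^1\setminus e_0$ acted on by $\mathrm{Stab}_{PGL_2(k)}(e_0)\cong\mathrm{Aff}(\mathbb{A}^1)$. Decency of the inner action comes from extending by the basepoint and the identity at $\infty$, and Theorem~\ref{thm: Algebraic main theorem} is then applied over the pencil. The bookkeeping you flag as the main obstacle is exactly the explicit homomorphism and equivariance check the paper writes out. Your use of $PGL_2(k)$ as the top group, together with \cite[Lemma 4.2]{Algebraic} for item 1 of decency, is if anything slightly cleaner than the paper's version.
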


\begin{proof}
    Since a finite index subgroup of $G$ lifts to a finite index subgroup of $\Gamma$, and having a finite index elliptic subgroup $\Gamma$ implies that $\Gamma$ acts elliptically by \cite[Remark 4.2]{Algebraic}, we can assume that $G$ fixes a point $u$ of $P$. 

    Since $u$ is $G$-invariant we have that $\Gamma \curvearrowright X_0^{\oplus u}$ and acts elliptically. Indeed, since $G$ fixes $u$, we have that $G$ projects to a finitely generated subgroup $G_u$ of $G_0$ which acts purely elliptically on $X_0$ (since $\Gamma \curvearrowright X$ is purely elliptic). Since $G_0 \curvearrowright X_0$ is decent, we obtain that $G_u$ fixes a point $z_0$ of $X_0$. Then $\Gamma \curvearrowright X_0^{\oplus u}$ fixes the point $(z_0)_u$. Therefore, it suffices to show that $\Gamma$ fixes a point of $X_0^{\oplus P \setminus u}$. 

    Denoting $e_0 = [1:0]$, we can embed: 

    $$\iota: P \setminus u \hookrightarrow L \times \mathbb{P}^1 \setminus e_0$$

    where $L$ is the set of all projective lines in $P$ containing $u$, as follows. Fixing a basis $(u, b_l)$ for each line $l \in L$, we obtain a bijection $B_l : l \to \mathbb{P}^1$. Note that $G$ acts on $L$, since $G$ fixes $u$. 

    Now, given $x \in P \setminus u$, we have that $x$ lies on the line $l = \Span( u, x )$ through $u$. Map $x$ to the tuple $\iota(x):=(l, B_{l}(x))) \in L \times \mathbb{P}^1\setminus e_0$. Then the map $\iota$ is injective since if $\iota(x) = \iota(y)$ for $x,y \in P \setminus u$, then $x$ and $y$ are on the same line through $u$ and have the same coordinates on this line with respect to the basis $(u,b_l)$, hence $x=y$. 

    This yields an embedding (in fact, a bijection): 

    $$\phi : X_0^{\oplus P \setminus u} \hookrightarrow \prod_L^r (\prod_{\mathbb{P}^1\setminus e_0}^r X_0 ) $$

    defined by $(x_p)_{p \in P \setminus u} \mapsto (\vec{x}_l)_{l \in L}$ where for each $l \in L$, $\vec{x}_l \in  \prod_{\mathbb{P}^1\setminus e_0}^r X_0$ is such that for each $r \in \mathbb{P}^1 \setminus e_0$, $(\vec{x}_l)_r = x_p$, where $p \in P \setminus u$ is the point on the line $l \in L$ with $B_l(p) = r$. Here, taking $x_0$ as the basepoint of $X_0$, we take the constant sequence $\vec{x} = (x_0, x_0, \ldots)$ as the basepoint for $\prod_{\mathbb{P}^1\setminus e_0}^r X_0$ and the constant sequence $(\vec{x}, \vec{x}, \ldots)$ as the basepoint for $\prod_{L}^r \prod_{\mathbb{P}^1\setminus e_0}^r X_0$. We have that $\phi$ is injective since $\iota$ is injective. 

    We next construct an injective homomorphism $\psi: \Gamma \hookrightarrow \prod_L^{r}  (\prod_{\mathbb{P}^1\setminus e_0}^rG_0 \rtimes \mathrm{Stab}_{PGL_2(k)}(e_0)) \rtimes  G$ defined by: 

    $$((g_p)_{p \in P \setminus u}, h) \mapsto ((\vec{g_l}), h_l)_{l \in L}, h)$$

    where for each $l \in L$, $\vec{g}_l \in \prod_{\mathbb{P}^1\setminus e_0}^r X_0$ is such that for each $r \in \mathbb{P}^1 \setminus e_0$, $(\vec{g}_l)_r = g_p$, where $p \in P \setminus u$ is the point on the line $l$ with $B_l(p) = r$. For each $l \in L$, we define $h_l \in \mathrm{Stab}_{PGL_2(k)}(e_0)$ as follows. Put $f_l$ to be the map from $h^{-1}(l) \to l$ induced by $h$. Then define $h_l = B_l f_l B_{h^{-1}(l)}^{-1} \in PGL_2(k)$. We have that $h_l(e_0) = e_0$ since $h(u) = u$, and $B_l(u) = e_0 = B_{h^{-1}(l)}(u)$

    Then $\psi$ is injective since it is injective on $\prod_{P \setminus u}^{r} G_0$ due to the injectivity of $\iota$, and it is injective on $G$ since it is the identity on the $G$ component. We now show that $\psi$ is a group homomorphism. 

    Consider two elements $\gamma = ((g_p), h)$ and $\gamma'=((g_p'), h')$ of $\Gamma$. Write $\psi(\gamma) = ((\vec{g_l}), h_l)_{l \in L}, h)$ and $\psi(\gamma') = ((\vec{g_l}'), h'_l)_{l \in L}, h')$. We have $\gamma \gamma' = ((g_p g'_{h^{-1}(p)})_p, hh')$, and 
    $$\psi(\gamma \gamma') = (((\vec{a_l}), (hh')_{l})_{l}, hh')$$ where for each $l \in L$ and each $r \in \mathbb{P}^1 \setminus e_0$, we have $(\vec{\alpha}_l)_r = g_p g_{h^{-1}(p)}'$ where $p \in l$ is such that $B_{l}(p) = r$.

    Now multiplying: 

    \begin{align*}
        \psi(\gamma)\psi(\gamma') &= ((\vec{g_l}), h_l)_{l \in L}, h)((\vec{g_l}'), h'_l)_{l \in L}, h')\\
        &= (((\vec{g_l})h_l \cdot(\vec{g}'_{h^{-1}(l)}), h_l h'_{h^{-1}(l)})_{l \in L}, hh')
    \end{align*}

    Now, to check that $\psi$ is a homomorphism, we need to check that: 

    \begin{enumerate}
        \item For each $l \in L$, $(hh')_l = h_lh_{h^{-1}(l)}'$
        \item For each $l \in L$, $\vec{a}_l = (\vec{g_l})h_l \cdot(\vec{g}'_{h^{-1}(l)})$
    \end{enumerate}

    For 1, let $F$ denote the map $(hh')^{-1}(l) \to l$ induced by $hh'$. Since $(hh')^{-1}(l) = (h')^{-1}h^{-1}(l)$, we have that $F = f_l f'_{h^{-1}(l)}$. Thus,

    \begin{align*}
        (hh')_l &= B_l  f_l f'_{h^{-1}(l)} B_{(hh')^{-1}(l)}^{-1} \\
        &= B_l f_l f'_{h^{-1}(l)} B_{(h')^{-1}h^{-1}(l)}^{-1} \\
    \end{align*}

    On the other hand, 

    \begin{align*}
        h_lh_{h^{-1}(l)}' &= B_l f_l B_{h^{-1}(l)}^{-1} B_{h^{-1}(l)} f'_{h^{-1}(l)} B_{(h')^{-1}h^{-1}(l)}^{-1} \\
        &= B_l f_l f'_{h^{-1}(l)}B_{(h')^{-1}h^{-1}(l)}^{-1} 
    \end{align*}

    Thus, we see that $(hh')_l = h_lh_{h^{-1}(l)}'$. 

    Next, we check 2.

    For each $l \in L$ and each $r \neq e_0$, we have: 

    $$((\vec{g}_l)h_l \cdot(\vec{g}'_{h^{-1}(l)}))_r = g_p g'_q$$
    where $p \in l$ is such that $B_l(p) = r$, i.e.\ $p = B_l^{-1}(r)$ and $q \in h^{-1}(l)$ is such that $B_{h^{-1}(l)}(q) = h_l^{-1}(r)$, i.e.\ $q = B_{h^{-1}(l)}^{-1}(h_l^{-1}(r))$. We need to show that $q = h^{-1}(p)$, i.e.\ $B_{h^{-1}(l)}^{-1}(h_l^{-1}(r)) = h^{-1}B_l^{-1}(r)$. By definition of $h_{l}$, we have:

    $$h_l = B_lhB_{h^{-1}(l)}^{-1}$$

    Re-arranging, we obtain: 

    $$B_{h^{-1}(l)}^{-1}h_l^{-1} = h^{-1}B_l^{-1}$$

    as desired. We conclude that 2 holds. 

    Thus, we conclude that $\psi$ is a group homomorphism. 

    We next check that the map $\phi$ is $\psi$-equivariant, i.e.\ that $$\phi(\gamma x) = \psi(\gamma) \phi(x)$$ for each $\gamma \in \Gamma$ and $x \in X_0^{\oplus P \setminus u}$.

    Write $\gamma = ((g_p)_p, h)$ and $x = (x_p)_p$, with $\psi(\gamma) = ((\vec{g_l}, h_l)_l, h)$ and $\phi(x) = (\vec{x_l})_l$. 

    On the one hand, we have: 

    $$\gamma x = (g_p x_{h^{-1}(p)})_p$$

    So that 

    $$\phi(\gamma x) = (\vec{y}_{l})_l$$

    with $(\vec{y}_{l})_r = g_p x_{h^{-1}(p)}$ for each $l \in L$, $r \neq e_0$, and $p \in l$ such that $B_{l}(p) = r$.

    On the other hand, we have: 

    \begin{align*}
        \psi(\gamma) \phi(x) &= ((\vec{g_l}, h_l)_l, h)(\vec{x_{l}})_l \\
        &= ((\vec{g}_l)_l h_l \cdot(\vec{x}_{h^{-1}(l)}))_l
    \end{align*}

    Let $l \in L$ and $r \in \mathbb{P}^1 \setminus e_0$. Then:

    \begin{align*}
        ((\vec{g}_l)h_l (\vec{x}_{h^{-1}(l)}))_r 
        &= g_p x_q
    \end{align*}

    where $p \in l$ is such that $B_{l}(p) = r$ and $q \in h^{-1}(l)$ is such that $B_{h^{-1}(l)}(q) = h_{l}^{-1}(r)$. By above, using the definition of $h_{l}$, we have shown that $q=h^{-1}(p)$, hence we obtain: 

    $$(((\vec{g}_l), h_{l}) (\vec{x}_{h^{-1}(l)}))_r = g_p x_{h^{-1}(p)} = (\vec{y}_{l})_r$$

    We conclude that $\phi$ is $\psi$-equivariant. 

    To finish the proof, we will show that $\psi(\Gamma)$ fixes a point of $X':=\prod_L^r (\prod_{\mathbb{P}^1 \setminus e_0}^r X_0)$. 

    Since $\Gamma$ is finitely generated, so is $\psi(\Gamma)$ and since $\Gamma$ acts purely elliptically on $X_0^{\oplus P \setminus u}$ and $\phi$ is $\psi$-equivariant, we have that $\psi(\Gamma)$ also acts purely elliptically on $X'$. By Theorem \ref{thm: Algebraic main theorem}, since $G_0 \curvearrowright X_0$ is decent, we have that $\prod_{\mathbb{P}^1}^rG_0 \rtimes PGL_2(k)$ acts decently on $\prod_{\mathbb{P}^1}^rX_0$ (as remarked in the introduction, Theorem \ref{thm: Algebraic main theorem} holds for arbitrary fields). We need to show that $\prod_{\mathbb{P}^1 \setminus e_0}^rG_0 \rtimes \mathrm{Stab}_{PGL_2(k)}(e_0)$ acts decently on $\prod_{\mathbb{P}^1 \setminus e_0}^rX_0$. 
    
    Let $\Lambda < \prod_{\mathbb{P}^1 \setminus e_0}^rG_0 \rtimes \mathrm{Stab}_{PGL_2(k)}(e_0)$ be finitely generated and act purely elliptically. Then $\Lambda$ embeds into $ \prod_{\mathbb{P}^1}^rG_0 \rtimes PGL_2(k)$ by mapping $((\lambda)_{r \in \mathbb{P}^1 \setminus e_0}, h)$ to $((\lambda')_{r \in \mathbb{P}^1}, h)$ with $\lambda'_r = \lambda_r$ for $r \neq e_0$ and $\lambda'_{e_0} = 1$, and mapping $h$ to itself under the inclusion $\mathrm{Stab}_{PGL_2(k)}(e_0) \hookrightarrow PGL_2(k)$. Since $\Lambda$ is finitely generated and acts purely elliptically, so is the image $\Lambda'$. We also have an embedding $\prod_{\mathbb{P}^1 \setminus e_0}^r X_0 \hookrightarrow \prod_{\mathbb{P}^1}^r X_0$ via $(x_r)_{r \in \mathbb{P}^1 \setminus e_0} \mapsto (x'_r)_{r \in \mathbb{P}^1}$ by putting $x'_r = x_r$ for $r \neq e_0$ and $x'_{e_0} = x_0$. It is not difficult to see that this embedding is equivariant with respect to the above embedding of $\Lambda$ into $\prod_{\mathbb{P}^1}^rG_0 \rtimes PGL_2(k)$. By Theorem \ref{thm: Algebraic main theorem}, we have that $\Lambda'$ fixes a point $y$ of $\prod_{\mathbb{P}^1}^r X_0$, from which it follows that $\Lambda$ fixes a point $z'$ of $X'$ defined by $(z')_r = y_r$ for each $r \in \mathbb{P}^1 \setminus e_0$.
    
    Finally, noting that $L \cong \mathbb{P}^1$ one last application of Theorem \ref{thm: Algebraic main theorem} yields that $\prod_L^{r}  (\prod_{\mathbb{P}^1 \setminus e_0}^rG_0 \rtimes \mathrm{Stab}_{PGL_2(k)}(e_0)) \rtimes  G$ acts decently on $\prod_L^r ( \prod_{\mathbb{P}^1 \setminus e_0}^r X_0) = X'$. Therefore, $\psi(\Gamma)$ fixes a point $x' \in X'$. Since $\phi$ is a $\psi$-equivariant bijection, $\Gamma$ fixes the point $x = \phi^{-1}(x')$ of $X_0^{\oplus P \setminus u}$. By our discussion at the beginning of the proof, we thus conclude that $\Gamma$ fixes a point of $X$. 
\end{proof} 

As a corollary, we obtain the following.

\begin{cor}
    \label{cor: virtually solvable implies elliptic}
    If $G$ is virtually solvable, then $G$ has a finite index subgroup fixing a point of $P$. In particular, $\Gamma$ is elliptic. 
\end{cor}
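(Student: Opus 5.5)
We will show that a virtually solvable $G<PGL_3(k)$, with $k$ algebraically closed (which we may assume, as explained after Theorem \ref{thm: main thm}), has a finite index subgroup fixing a point of $P$; the ellipticity of $\Gamma$ then follows at once from Lemma \ref{lem: virtual fixed point on P implies elliptic}.

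The natural tool is Borel's theorem (the Lie--Kolchin theorem in the form cited from \cite[14.65]{KapDrutu}): a connected solvable linear algebraic group over an algebraically closed field fixes a point of any projective variety on which it acts, in particular of $\mathbb{P}^2(k)$. First, pass to a finite index solvable subgroup $G_1<G$. Then lift $G_1$ to $\tilde G_1<SL_3(k)$, taking the full preimage under $SL_3(k)\to PGL_3(k)$. This map is surjective since $k$ is algebraically closed, and its kernel is the finite central group $\mu_3$, so $\tilde G_1$ is again solvable (an extension of a solvable group by an abelian one).

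Next, let $\overline{G_1}$ be the Zariski closure of $\tilde G_1$ in $SL_3(k)$. The Zariski closure of a solvable group is solvable, since the derived series terms of the closure are contained in the closures of the derived series terms, which are eventually trivial. Its identity component $\overline{G_1}^{\circ}$ has finite index in $\overline{G_1}$, so $G_2:=\tilde G_1\cap \overline{G_1}^{\circ}$ has finite index in $\tilde G_1$. By Borel's (Lie--Kolchin) theorem, $\overline{G_1}^{\circ}$ is triangularizable, so it has a common eigenvector $v\in k^3$. Hence the image of $G_2$ in $PGL_3(k)$, which has finite index in $G_1$ and therefore in $G$, fixes $[v]\in P$.

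The main obstacle is purely bookkeeping: we must check that the relevant facts (solvability passes to Zariski closures, the identity component has finite index, Lie--Kolchin holds) are available over an arbitrary algebraically closed $k$, and that finite index is preserved through lifting to $SL_3$ and projecting back. No finite generation or field-of-definition issues arise, because Lie--Kolchin works over $k$ directly.
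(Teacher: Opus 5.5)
Your proposal is correct and follows essentially the same route as the paper: pass to a finite index solvable subgroup, take the identity component of its Zariski closure, apply Lie--Kolchin/Borel (\cite[Theorem 14.65]{KapDrutu}) to get a common eigenvector, intersect back to obtain a finite index subgroup fixing a point of $P$, and conclude with Lemma \ref{lem: virtual fixed point on P implies elliptic}. Your preliminary lift to $SL_3(k)$, whose kernel is finite and central, is a harmless extra step that makes explicit where the Zariski closure is taken; the paper's proof leaves this implicit.
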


\begin{proof}
    First, we will show that if $G$ is virtually solvable, then $G$ contains a finite index subgroup conjugate into the group $T_3(k)$ of invertible upper triangular matrices. Indeed, let $H < G$ be a finite index solvable subgroup. Let $\overline{H}$ denote the Zariski closure of $H$. The irreducible component of the identity $\overline{H}^o$ has finite index in $\overline{H}$ and is connected (see \cite[Proposition 5.92]{KapDrutu}). We have that $\overline{H}$ is solvable, and so $\overline{H}^o$ is also solvable. By \cite[Theorem 14.65]{KapDrutu}, we have that $\overline{H}^o$ is conjugate into $T_3(k)$. Let $H^o = \overline{H}^o \cap H$. Then $H^o$ is conjugate into $T_3(k)$ and has finite index in $H$, hence also finite index in $G$. Since $H^o$ is conjugate into $T_3(k)$, we have that $H^o$ fixes a point of $P$. By Lemma \ref{lem: virtual fixed point on P implies elliptic}, $\psi^{-1}(H^o)$ acts elliptically on $X$, and hence so does $\Gamma$ (since $\vert \Gamma : \psi^{-1}(H^o) \vert < \infty$).
    
\end{proof}

\subsection{The case of a very proximal element}
\label{sec: NSD element}

Keeping the notation $H,P,\Gamma, G, X$ as in the previous subsection, in this subsection, we prove that $\Gamma$ fixes a point of $X$ in the case that $G$ contains a very proximal element with respect to some local field $(\mathbb{K}, \vert \cdot \vert)$.

\begin{lem}
    \label{lem: very proximal}
    Let $\K$ be a local field, $P = \mathbb{P}^2(\K)$ the projective plane over $\K$ and $H = PGL_3(\K)$. Suppose $\Gamma < G^{\oplus}$ is finitely generated and acts purely elliptically on $X$. If $G$ has a very proximal element $t$, then $\Gamma$ fixes a point of $X$. 
\end{lem}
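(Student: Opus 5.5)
Let $t=\gamma$'s projection with $\gamma_t\in\Gamma$ a lift, and let $p_+,p_-\in P$ be the attracting and repelling fixed points of $t$ (the eigenlines for $\lambda_+,\lambda_-$). Let $\ell_+$ be the projective line spanned by the two non-dominant eigendirections, i.e.\ the $t$-invariant complement of $p_+$, and $\ell_-$ the complement of $p_-$. The approach is to mirror the proof of \cite[Theorem 1.10]{Algebraic} in the loxodromic case, using proximal dynamics in place of north--south dynamics on $\mathbb{P}^1$. Points not in $\ell_+\cup\ell_-$ converge to $p_+$ under $t^n$ and to $p_-$ under $t^{-n}$. Hence every such point has an infinite $t$-orbit. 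The finite $t$-orbits are therefore contained in $\ell_+\cup\ell_-$, which is a union of two lines meeting in at most one point.

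\textbf{Step 1 (determine the candidate fixed point).} Since $\gamma_t$ is elliptic, pick a fixed point $z$. By Lemma \ref{Inf orb fix pt}, the coordinates of $z$ on every infinite $t$-orbit are determined by $\gamma_t$ alone, and they equal $x_0$ off a finite set. The freedom lies only on $\ell_+\cup\ell_-$. The plan is to first handle $\Gamma$ acting on these lines. Consider the generators $s_1,\dots,s_m$ of $\Gamma$, and form the elements $\gamma_t^{n} s_i \gamma_t^{-n}$ and $s_i\gamma_t^n$ for large $n$. These are elliptic by pure ellipticity. Using the persistent fibre criterion (Remark 4.6 of \cite{Algebraic}), I will show that each generator $s_i$ is biregular with respect to $z$ over every point outside a finite subset of $P\setminus(\ell_+\cup\ell_-)$. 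On that finite set we modify $z$ as allowed, arguing as in \cite{Algebraic}: if $s_i$ were singular at some point $q$ with infinite $t$-orbit, then for large $n$ the element $s_i\gamma_t^n$ (or a conjugate) would have persistent fibre at the image of $q$. This is because the dynamics of $t$ push $q$'s orbit toward $p_\pm$ while the singular set of $s_i$ is finite.

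\textbf{Step 2 (reduce to the lines).} After Step 1, either $G$ preserves $\ell_+\cup\ell_-\cup\{p_+,p_-\}$ (up to finitely many exceptions), or some generator moves $p_+$ or $\ell_+$. In the first case, $G$ virtually fixes a point or a line of $P$. Dually, fixing a line gives a fixed point of the dual plane, and an analogous argument applies. Here I would invoke Lemma \ref{lem: virtual fixed point on P implies elliptic}, or the decency of the $\mathbb{P}^1$ action from Theorem \ref{thm: Algebraic main theorem} applied to the invariant line. In the second case, I would use a ping-pong argument. Conjugating $t$ by a generator $s$ gives a second very proximal element $sts^{-1}$ whose attracting and repelling data are in general position with those of $t$. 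Then for large $n$ the product $t^n s t^{n} s^{-1}$ has a point with persistent fibre unless the coordinates of $z$ are compatible. Compatibility forces all generators to be biregular with respect to $z$, so $z$ is fixed by $\Gamma$.

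\textbf{Main obstacle.} The hardest step is the last one: controlling singularities that sit on the invariant lines $\ell_\pm$, where $t$ has finite orbits and Lemma \ref{Inf orb fix pt} gives no uniqueness. On $\mathbb{P}^1$ those sets are just two points, but here they are whole lines. My first attempt would be to restrict $t$ to $\ell_+$, where it acts as an element of $PGL_2$. Within that line I would either use \cite[Lemma 4.7]{Algebraic}, since $\langle t\rangle$ is abelian and its restriction has finitely many finite orbits when the two remaining eigenvalues have distinct absolute values, or otherwise appeal to Lemma \ref{lem: virtually nilpotent projection}. The goal is to pin down $z$ on $\ell_\pm$ consistently with the generators.
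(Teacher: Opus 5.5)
Your outline has the right ingredients: a fixed point $z$ of $t$, uniqueness on infinite $t$-orbits via Lemma \ref{Inf orb fix pt}, and persistent fibres of words in $t$ and the generators. But the step carrying all the weight is asserted rather than proved, and as stated it is false.

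\textbf{The ping-pong step.} Write $p=\ell_+\cap\ell_-$ for the middle eigendirection, so $\ell_+=\Span(p,p_-)$ is the repelling line of $t$. You claim that if a generator $f$ is singular over a point $r$ with infinite $t$-orbit, then $f\gamma_t^n$ (or $t^nf$) has persistent fibre. This is the $\mathbb{P}^1$ ping-pong. In $\mathbb{P}^2$ it goes through when $r,f(r)\notin\ell_+\cup\ell_-$ and $f$ is in general position with respect to the flag of $t$, i.e.\ $f(p_+)\notin\ell_+$ and $f^{-1}(p_-)\notin\ell_-$, but not in general. If $f(p_+)\in\ell_+$, then $f$ carries a neighbourhood of $p_+$ to a neighbourhood of a point of $\ell_+$. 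There $t^n$ drives points toward $p$ rather than back to $p_+$, so the forward orbit of $r$ is not trapped near $p_+$ and nothing keeps it off the singular set of $f$. Similarly, points of $\ell_\pm\setminus\{p_+,p,p_-\}$ have infinite $t$-orbits but are pushed toward $p$, so Step 1 is unsupported for them. Your Step 2 assertion that $sts^{-1}$ is in general position with $t$ fails in exactly these configurations (e.g.\ $s(p_+)\in\ell_+$).

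Handling these configurations is the bulk of the paper's proof of Claim \ref{lem: regularity lemma}. If exactly one of $f(p_+)\in\ell_+$, $f^{-1}(p_-)\in\ell_-$ holds, one shows $f(p)\notin\ell_+$ and uses the word $t^Nft^nf$, with $t^N$ pushing $f(p_+)$ toward $p$ inside $\ell_+$. If both hold, one uses $t^{-n}ft^nf$ or $t^{-n}f^2t^Nf$. In the remaining subcases ($f$ fixing $p$, swapping $\ell_\pm$, swapping $p_\pm$, or cyclically permuting $p_+,p,p_-$), some $\langle t,f^k\rangle$ with $k\in\{1,2,3\}$ fixes a point of $P$. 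It is then elliptic by Lemma \ref{lem: virtual fixed point on P implies elliptic}, so $\langle t,f\rangle$ is elliptic, and Lemma \ref{Inf orb fix pt} transfers biregularity to $z$. Your phrase ``compatibility forces all generators to be biregular'' does not replace this.

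\textbf{The obstacle you single out is misdiagnosed.} Let $\lambda_0$ be the middle eigenvalue, so $|\lambda_+|>|\lambda_0|>|\lambda_-|$. Then $t$ acts loxodromically on each of $\ell_\pm$, and its only finite orbits in $P$ are $\{p_+\}$, $\{p\}$, $\{p_-\}$. By Lemma \ref{Inf orb fix pt}, any two fixed points of $t$ agree off these three points, so there is no freedom on whole lines. Restricting $\langle t\rangle$ to $\ell_\pm$ and invoking \cite[Lemma 4.7]{Algebraic} only reproves that $t$ is elliptic and says nothing about the other generators. Lemma \ref{lem: virtually nilpotent projection} does not apply either, since $G$ may contain free subgroups.

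\textbf{What remains, and how the paper does it.} Once biregularity is known for $r,f(r)\notin E:=\ell_+\cup\ell_-$, one still has to handle $f(r)\in E$ and $r\in E$. The paper sets $z'_q=f(z)_q$ for any $f\in G$ and $r\notin E$ with $f(r)=q$; this is well defined by the claim. It then checks that $z'$ is still fixed by $t$. Two cases are treated separately:
\begin{itemize}
\item $G$ preserves $E$: then $G^2$ fixes $p$.
\item Some orbit $Gq$ lies in $E$: if it is finite, adjust coordinates on it. If it is infinite, either a line $\ell_\pm$ is $G$-invariant and Theorem \ref{thm: Algebraic main theorem} is applied on it, or $G^2$ preserves both lines.
\end{itemize}

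Finally, your ``dual'' treatment of an invariant line does not work. The restricted product is indexed by points of $P$, so a fixed point of the dual plane gives nothing to which Lemma \ref{lem: virtual fixed point on P implies elliptic} can be applied.
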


First, we set some notation. If $t \in G$ is a very proximal element, let $p_+ \in P$ be the projectivization of the eigenspace of $t$ corresponding to its dominant eigenvalue and let $p_-$ be the projectivization of the minimal absolute value eigenspace of $t$. Denote $p$ the projectivization of the remaining one-dimensional eigenspace of $t$ (which has eigenvalue with absolute value strictly between the associated eigenvalues of $p_+$ and $p_-$). Denote $P_- = \Span(p_-, p)$ and $P_+ = \Span(p_+, p)$. 

Applying \cite[Corollary 2.87]{KapDrutu} to the proximal elements $t$ and $t^{-1}$, we obtain the following: 

\begin{lem}
    \label{very proximal NSD}
    Let $\K$ be a local field. Let $t$ be a very proximal element acting on $P \cong \mathbb{P}^d(\K)$ for some $d \in \N$.
    Then for all $\varepsilon > 0$, there exists $N \in \N$ such that for all $n \geq N$, for all $x \notin N_{\varepsilon}(P_-)$ we have $t^{n}(x) \in B_{\varepsilon}(p_+)$ and for all $y \notin N_{\varepsilon}(P_+)$, we have $t^{-n} (y) \in B_{\varepsilon}(p_-)$. 
\end{lem}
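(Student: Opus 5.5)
The plan is to apply the standard contraction result for a single proximal element twice, once to $t$ and once to $t^{-1}$, and then take the larger of the two thresholds. I read \cite[Corollary 2.87]{KapDrutu} in the following form. If $s \in PGL(V)$ is proximal with dominant eigenline $q_+$ and $s$-invariant complementary hyperplane $Q$ (the sum of the generalized eigenspaces for the other eigenvalues), then for every $\varepsilon>0$ there is $N$ such that $s^n(x) \in B_\varepsilon(q_+)$ for all $n \ge N$ and all $x \notin N_\varepsilon(Q)$.

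\emph{Step 1: the element $t$.} Here $q_+ = p_+$. The complementary invariant hyperplane is the projectivized span of the remaining eigenspaces. In the case $d=2$ these are $p$ and $p_-$, so it is $\Span(p_-,p) = P_-$. (For general $d$ one reads $P_-$ as this complementary hyperplane.) The corollary gives $N_1$ with $t^n(x) \in B_\varepsilon(p_+)$ for all $n\ge N_1$ and all $x\notin N_\varepsilon(P_-)$.

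\emph{Step 2: the element $t^{-1}$.} Since $t$ is very proximal, $t^{-1}$ is proximal. Its eigenvalues are the inverses of those of $t$, so its dominant eigenvalue is $\lambda_-^{-1}$, with eigenline $p_-$. Its complementary invariant hyperplane is spanned by the other eigenlines, namely $\Span(p_+,p) = P_+$. The corollary gives $N_2$ with $t^{-n}(y)\in B_\varepsilon(p_-)$ for all $n \ge N_2$ and all $y \notin N_\varepsilon(P_+)$. Setting $N=\max(N_1,N_2)$ proves the lemma.

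\emph{Main obstacle.} The only real work is checking that the cited corollary has exactly this uniform form, i.e.\ that it holds uniformly on the complement of the $\varepsilon$-neighbourhood of the hyperplane. That complement is closed in $P$, and $P$ is compact because $\K$ is local, so a statement about compact sets avoiding $Q$ suffices. If the corollary is not available in this form, I would argue directly. Lift $x=[a v_+ + w]$ with $w$ in the lift $W$ of $Q$. Because $\K^{3}=\K v_+\oplus W$ is a direct sum with $W$ closed, $d(x,Q)\ge\varepsilon$ forces $|a|\ge c\,\varepsilon|av_++w|$ for some constant $c>0$. Let $\mu<|\lambda_+|$ be the spectral radius of $t|_W$. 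Gelfand's formula gives $|t^n w|\le C(\mu+\delta)^n|w|$ for any fixed $\delta>0$. Plugging these into the chordal metric, with $t^n x = [\lambda_+^n a v_+ + t^n w]$, gives
\[
d(t^n x,p_+) \le C'\,\varepsilon^{-1}\bigl((\mu+\delta)/|\lambda_+|\bigr)^n .
\]
Choosing $\delta$ so that $\mu+\delta<|\lambda_+|$, the right-hand side tends to $0$ uniformly in $x$, which yields $N_1$; the same argument for $t^{-1}$ yields $N_2$.
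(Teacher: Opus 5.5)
Your proposal is correct and follows the paper's proof: the paper applies \cite[Corollary 2.87]{KapDrutu} to the proximal elements $t$ and $t^{-1}$, with attracting points $p_+$, $p_-$ and repelling hyperplanes $P_-$, $P_+$ respectively, and takes the larger of the two thresholds. Your fallback estimate, via Gelfand's formula and the bound $|a|\ge c\,\varepsilon|av_++w|$, is a sound self-contained substitute for the citation if its uniform form is in doubt.
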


\begin{proof} [Proof of Lemma \ref{lem: very proximal}]
    Since $\Gamma$ acts purely elliptically on $X$, there exists a fixed point $z$ for $t$. Denote $E = P_+ \cup P_-$. 
    We first prove the following claim.

    \begin{clm}
    \label{lem: regularity lemma}
     Suppose that $r \in P \setminus E$ and $f \in G$ is such that $f(r) \notin E$. Then $f$ is biregular over $r$ with respect to $z$. 
\end{clm}

\begin{proof}[Proof of Claim \ref{lem: regularity lemma}]

    Let $f,r$ be as in the statement of the claim. Let $B$ denote the set of all $q \in P$ over which $f$ and $f^{-1}$ are singular. We consider the following cases. 

    \begin{enumerate}
        \item First, suppose $f(p_+) \notin P_-$ and $f^{-1}(p_-) \notin P_+$. We will show that there exists $n \in \N$ such that for all $i > 0$, $(t^n f)^i(r) \notin B$ and for all $i < 0$, $t^{-n}(t^n f)^i(r) \notin B$. If $f$ is singular over $r$, then it will follow that $t^n f$ has persistent fibre over $r$, contradicting that $t^nf$ fixes a point of $X$. We make the following choices of neighbourhoods separating points from the exceptional subspaces $P_-$ and $P_+$.
        
        \begin{itemize}
            \item Since $f(p_+) \notin P_-, f^{-1}(p_-) \notin P_+, r \notin P_+$ and $f(r) \notin P_-$, using that $\mathbb{P}^2$ is a regular topological space (since $\K$ is a locally compact field), we can choose $\ve_1 > 0$ such that $B_{\ve_1}(f(p_+)) \cap N_{\ve_1}(P_-) = \emptyset$, $B_{\ve_1}(f^{-1}(p_-)) \cap N_{\ve_1}(P_+) = \emptyset$, $B_{\ve_1}(f(r)) \cap N_{\ve_1}(P_-) = \emptyset$ and $B_{\ve_1}(r) \cap N_{\ve_1}(P_+) = \emptyset$.
            \item Using the continuity of $f$ and $f^{-1}$, there exists $0 < \ve < \ve_1$ such that $f(B_{\ve}(p_+)) \subset B_{\ve_1}(f(p_+))$ and $f^{-1}(B_{\ve}(p_-)) \subset B_{\ve_1}(f^{-1}(p_-))$. We arrange for $\ve > 0$ to be small enough such that $B_{\ve}(p_+) \cap (B \cup \{f^{-1}(p_+)\}) \subset \{p_+\}$ and $B_{\ve}(p_-) \cap (B \cup \{f(p_-)\}) \subset \{p_-\}$ (i.e.\ we separate $p_+$ from the finite set $B \cup \{f^{-1}(p_+)\}$ if $p_+$ is not in this set, and similarly with $p_-$).
            \item Choose $n \in \N$ as in Lemma \ref{very proximal NSD} for $t$ and $\ve$, and let $U_+ = B_{\ve}(p_+), U_- = B_{\ve}(p_-)$.
        \end{itemize}

        Refer to Figure \ref{fig:case 1 dyn} for the choices of neighbourhoods above and the dynamics of $t^{\pm n}$ and $f^{\pm 1}$. 

        \begin{figure}[H]
            \centering
            \includegraphics[width=0.8\linewidth]{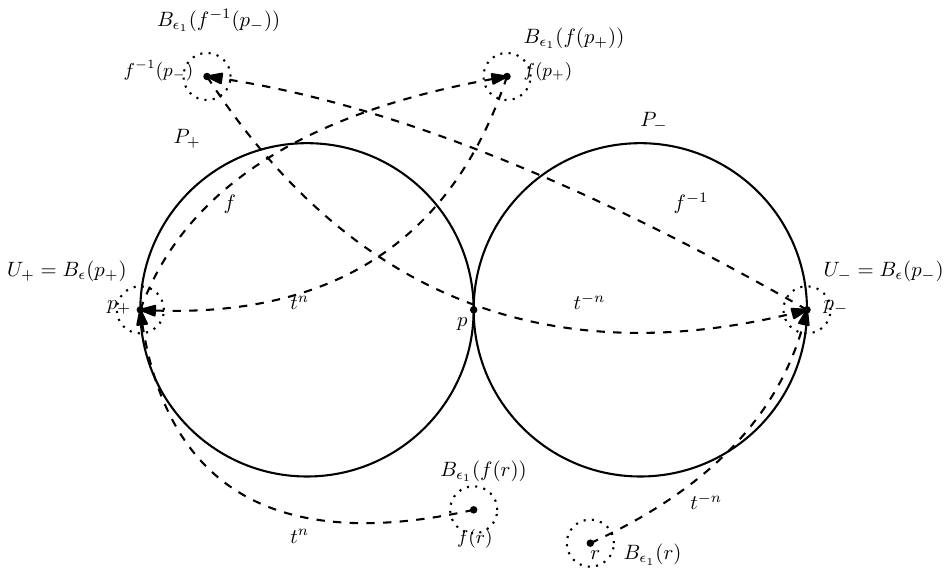}
            \caption{The dynamics of case 1.}
            \label{fig:case 1 dyn}
        \end{figure}

        We will show by induction on $m \geq 1$ that $(t^nf)^m (r) \in U_+ \setminus p_+$, which will imply that $(t^n f)^m (r) \notin B$ for all $m > 0$. 
        
        Since $f(r) \notin N_{\ve_1}(P_-)$ and since $\ve < \ve_1$, we have $f(r) \notin N_{\ve}(P_-)$ and so by Lemma \ref{very proximal NSD} and our choice of $n$, we have $t^nf(r) \in U_+ \setminus p_+$. Suppose $(t^nf)^m(r) \in U_+ \setminus p_+$ for some $m > 0$. Then $f(t^n f)^m(r) \in f(U_+) \subset B_{\ve_1}(f(p_+))$ which is disjoint from $N_{\ve}(P_-)$ by our choice of $\ve$, and hence $(t^n f)^{m+1}(r) = t^n f (t^n f)^m(r) \in U_+$. If  $(t^n f)^{m+1}(r) = p_+$, then $(t^n f)^m (r) = f^{-1}(p_+)$, which is a contradiction to $(t^nf)^m(r) \in U_+ \setminus p_+$ unless $f^{-1}(p_+) = p_+$, in which case $(t^n f)^{m+1}(r) = p_+$ implies $(t^n f)^{m}(r) = p_+$, a contradiction. Therefore, $(t^n f)^{m}(r) \in U_+ \setminus p_+$ for all $m > 0$. 

        Thus, $(t^nf)^m(r) \notin B$ for all $m > 0$. 

        Next, we show that $t^{-n}(t^nf)^m (r) \notin B$ for all $m <0$ by showing inductively that for each $m < 0$, we have $t^{-n} (t^n f)^m (r) \in U_- \setminus p_-$. 

        For $m = -1$, starting with $r \notin N_{\ve}(P_+)$, we have $t^{-n} (r) \in U_- \setminus p_-$, and thus, $f^{-1}t^{-n} (r) \in f^{-1}(U_-)$. Since $f^{-1}(U_-) \cap N_{\ve}(P_+) = \emptyset$, we have $t^{-n}(t^n f)^{-1}(r) = t^{-n}f^{-1}t^{-n}(r) \in U_-$. Furthermore, $t^{-n}(t^n f)^{-1}(r) \neq p_-$. Indeed, otherwise $t^{-n}(r) = f(p_-)$. If $f(p_-) \neq p_-$, then this contradicts that $U_- \setminus p_-$ does not contain $f(p_-)$, and if $f(p_-) = p_-$, then $r = p_-$, contradicting that $r \notin P_-$. 

        Now suppose that $t^{-n}(t^n f)^m (r) \in U_- \setminus p_-$ for some $m < 0$. We then have $f^{-1}t^{-n}(t^n f)^m (r) \in f^{-1}(U_-)$. Since $f^{-1}(U_-) \cap N_{\ve}(P_+) = \emptyset$, we have $t^{-n}(t^n f)^{m-1}(r) \in U_-$. If $t^{-n}(t^n f)^{-m-1}(r) = p_-$, then $(t^n f)^{m-1}(r) = p_-$, which implies $t^{-n} (t^n f)^m (r) = f(p_-)$. This is a contradiction unless $f(p_-) = p_-$, in which case $t^{-n}(t^n f)^{m-1}(r) = p_-$ implies $t^{-n}(t^n f)^{m}(r) = p_-$, a contradiction. Therefore, $t^{-n}(t^n f)^{m-1}(r) \in U_- \setminus p_-$. 

        Thus, $t^{-n}(t^nf)^m(r) \in U_- \setminus p_- \subset P \setminus B$ for all $m < 0$.

        If $f$ is singular over $r$, then $(t^nf)^m(r) \notin B$ for all $m > 0$ and $t^{-n}(t^nf)^m(r) \notin B$ for all $m < 0$ imply that $t^n f$ has persistent fibre over $r$. 

        Indeed, we will show that $(t^n f)^m$ is singular over $r$ for all $m > 0$ and $(t^n f)^m$ is biregular over $r$ for all $m < 0$. 
        \vs
        \underline{$(t^n f)^m$ is singular over $r$ for all $m > 0$}: 
        \vs
        For $m = 1$, if $t^n f$ is biregular over $r$, then since $t^{-n}$ is biregular over $t^n f(r)$, by \cite[Remark 4.4]{Algebraic}, we have that $f = t^{-n} t^n f$ is regular over $r$, a contradiction. Therefore, $t^n f$ is singular over $r$. 

        Suppose that for some $m > 0$, $(t^n f)^m$ is singular over $r$. If $(t^n f)^{m+1}$ is biregular over $r$, then since $t^{-n}$ is biregular over $(t^n f)^{m+1}(r)$, we have by \cite[Remark 4.4]{Algebraic} that $f(t^nf)^m = t^{-n} (t^n f)^{m+1}$ is biregular over $r$. Since $(t^nf)^m(r) \notin B$, we have that $f$ is biregular over $(t^nf)^m(r)$, and so by \cite[Remark 4.4]{Algebraic}, $f^{-1}$ is biregular over $f(t^nf)^m(r)$. Then again by \cite[Remark 4.4]{Algebraic}, we have that $(t^nf)^m = f^{-1}(f (t^nf)^m)$ is biregular over $r$, a contradiction. Therefore, $(t^nf)^m$ is singular over $r$ for all $m > 0$. 
\vs
        \underline{$(t^n f)^m$ is biregular over $r$ for all $m < 0$}:
\vs
        For $m = -1$, we have $t^{-n}(r) \notin B$, so $f^{-1}$ is biregular over $t^{-n}(r)$. By \cite[Remark 4.4]{Algebraic}, since $t^{-n}$ is biregular over $r$, we then have that $(t^nf)^{-1} = f^{-1}t^{-n}$ is biregular over $r$. 

        Suppose that $(t^nf)^m$ is biregular over $r$ for some $m < 0$. Since $t^{-n} (t^nf)^{m}(r) \notin B$, we have that $f^{-1}$ is biregular over $t^{-n} (t^nf)^{m-1}(r)$. Since $(t^nf)^m$ is biregular over $r$ and $t^{-n}$ is biregular over $(t^nf)^m(r)$, by \cite[Remark 4.4]{Algebraic}, we have that $t^{-n} (t^nf)^{m}$ is biregular over $r$. Lastly, since $t^{-n} (t^nf)^{m}$ is biregular over $r$ and $f^{-1}$ is biregular over $t^{-n} (t^nf)^{m}(r)$, one more application of \cite[Remark 4.4]{Algebraic} yields that $(t^n f)^{m-1} = f^{-1}t^{-n} (t^nf)^{m}(r)$ is biregular over $r$. Therefore, $(t^n f)^m$ is biregular over $r$ for each $m < 0$. 

        Thus, $t^nf$ has persistent fibre over $r$ (with $l = 1$), contradicting that $\Gamma$ acts purely elliptically. 

        Hence, we must have that $f$ is biregular over $r$. 

        \item Suppose exactly one of $f(p_+) \in P_-$ or $f^{-1}(p_-) \in P_+$ is true. Without loss of generality, we can assume that $f(p_+) \in P_-$ and $f^{-1}(p_-) \notin P_+$, since if $f^{-1}(p_-) \notin P_+$, then we can replace $t$ with $t^{-1}$ (which will swap $p_-$ with $p_+$ and $P_-$ with $P_+$), $f$ with $f^{-1}$ and $r$ with $f(r)$ and run the argument for $f(p_+) \in P_-$. 

        Then we must have $f(p) \notin P_-$, since otherwise if $f(p) \in P_-$, then $p_+, p \in f^{-1}(P_-)$, and so $P_+ = \Span(p_+, p) \subseteq f^{-1}(P_-)$, which implies $P_+ = f^{-1}(P_-)$, and hence that $f^{-1}(p_-) \in P_+$, a contradiction. 

        We will show that there exists $N,n \in \N$ such that for all $i \neq 0$, we have $(t^N f t^n f)^{i}(r) \notin B$. 

        First, by the argument in case 1, since $f^{-1}(p_-) \notin P_+$, there exists a neighbourhood $U_-$ of $p_-$ such that $U_- \setminus p_- \subset P \setminus B$ and $n_0 \in \N$ such that for all $m \geq n_0$ and all $i < 0$, we have $t^{-m}(t^m f)^i (r) \in U_- \setminus p_- \subset P\setminus B$. 

        Now, we choose separating neighbourhoods and powers of $t$ as follows (see Figure \ref{fig:case2}): 
        
        \begin{itemize}
            \item Since $f(p) \notin P_-$, there exists $\ve_1 > 0$ such that $B_{\ve_1}(f(p)) \cap N_{\ve_1}(P_-) = \emptyset$.
            \item Next, by continuity of $f$, there exists $\ve_2 > 0$ such that $f(B_{\ve_2}(p)) \subset B_{\ve_1}(f(p))$ and $f(p_+) \notin B_{\ve_2}(p_-)$ (which is possible since $f(p_+) \neq p_-$, since $f^{-1}(p_-) \notin P_+$). We can choose $\ve_2$ small enough such that $B_{\ve_2}(p) \cap (B \cup \{f^{-1}(p_+)\})  \subset \{p\}$. 
            \item Then by Lemma \ref{very proximal NSD} applied to $t$ acting on $P_-$, there exists $N \geq n_0$ such that $t^N f(p_+) \in B_{\ve_2}(p)$.
            \item By continuity of $t$, there exists $\ve_3 > 0$ such that $t^N (B_{\ve_3}(f(p_+))) \subset B_{\ve_2}(p)$. 
            \item Since $f(r) \notin P_-$, there exists $\ve_4 > 0$ such that $B_{\ve_4}(f(r)) \cap N_{\ve_4}(P_-) = \emptyset$. We can take $\ve_4 < \ve_1$ small enough such that $B_{\ve_4}(p_+) \cap (B \cup \{f^{-1}(p))\} \subset \{p_+\}$ and such that $f(B_{\ve_4}(p_+)) \subset B_{\ve_3}(f(p_+))$. 
            \item By Lemma \ref{very proximal NSD}, there exists  $n \geq n_0$ such that $t^n f(r) \in B_{\ve_4}(p_+)$ (choosing $n \geq n_0$ for $\ve_4$ in Lemma \ref{very proximal NSD}).  
        \end{itemize}

        \begin{figure}[H]
            \centering
            \includegraphics[width=0.8\linewidth]{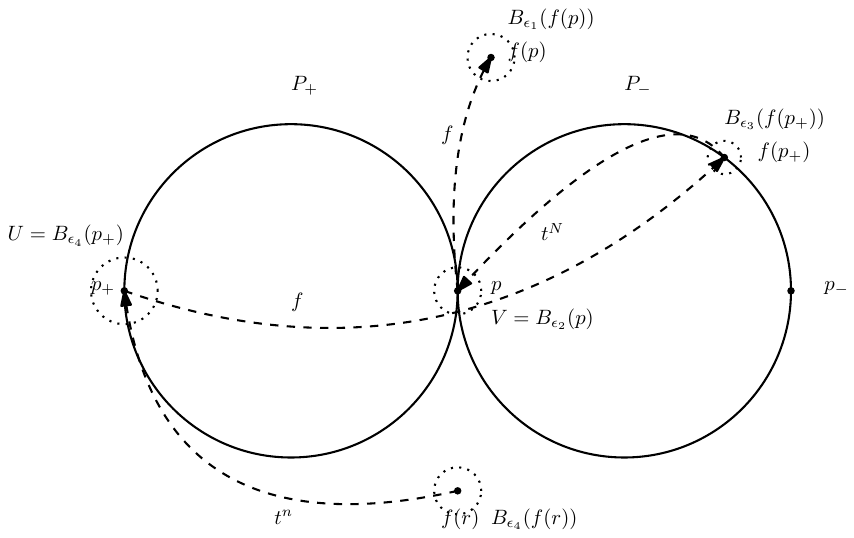}
            \caption{The dynamics of case 2.}
            \label{fig:case2}
        \end{figure}
        
        Let $U = B_{\ve_4}(p_+)$ and $V = B_{\ve_2}(p)$. We now show by induction on $i \in \N$ that for all $i > 0$, we have $(t^Nft^nf)^i(r) \in V \setminus p$ and for all $i \geq 0$, we have $t^nf (t^Nft^nf)^i(r) \in U \setminus p_+$. These imply that for all $i > 0$, we have $(t^Nft^nf)^i(r) \notin B$ by choice of $V$ and for all $i \geq 0$ we have $t^n f(t^Nft^nf)^i(r) \notin B$ by choice of $U$ and $V$.

        Since $B_{\ve_4}(f(r)) \cap N_{\ve_4}(P_-) = \emptyset$, by our choice of $n$ we have $t^n f(r) \in U$, and $t^n f(r) \neq p_+$ since $f(r) \notin P_+$. Then $ft^nf(r) \in f(U) \subset B_{\ve_3}(f(p_+))$, and hence $t^N ft^nf(r) \in V$ by choice of $N$. Furthermore, if $t^N f t^n f(r) = p$, then $t^n f(r) = f^{-1}(p)$. If $f^{-1}(p) \neq p_+$, then this is a contradiction to our choice of $U$. Otherwise, if $f^{-1}(p) = p_+$, then we have $t^n f(r) = p_+$, which implies $f(r) = p_+$, contradicting that $f(r) \notin P_+$. Therefore, $t^Nft^n f(r) \in V \setminus p$. 

        Suppose that $(t^N f t^n f)^i(r) \in V \setminus p$ and that $t^n f(t^N f t^n f)^{i-1}(r) \in U \setminus p_+$ for some $i > 0$. We first show that $t^n f(t^N f t^n f)^{i}(r) \in U \setminus p_+$. From $(t^N f t^n f)^i(r) \in V \setminus p$, we have $f(t^N f t^n f)^i(r) \in f(V) \subset B_{\ve_1}(f(p))$, which is disjoint from $N_{\ve_1}(P_-)$, and hence from $N_{\ve_4}(P_-)$, since $\ve_4 < \ve_1$. Then $t^n f(t^N f t^n f)^i(r) \in U$. If $t^n f(t^N f t^n f)^i(r) = p_+$, then $(t^N f t^n f)^i(r) = f^{-1}(p_+)$. If $f^{-1}(p_+) \neq p$, then this contradicts $(t^N f t^n f)^i(r) \in V \setminus p$. Otherwise, we obtain $(t^N f t^n f)^i(r) = p$, which implies $t^n f (t^Nft^nf)^{i-1}(r) = f^{-1}(p)$. If $f^{-1}(p) \neq p_+$, then this contradicts our choice of $U$, since $U$ separates $p_+$ from $f^{-1}(p)$ if they are not equal. If $f^{-1}(p) = p_+$, then $t^n f (t^Nft^nf)^{i-1}(r) = p_+$, which contradicts that $t^n f(t^N f t^n f)^{i-1}(r) \in U \setminus p_+$. Therefore, we must indeed have that $t^n f(t^N f t^n f)^{i}(r) \in U \setminus p_+$.  
        
        Next, we show $(t^N f t^n f)^{i+1}(r) \in V \setminus p$. From $t^n f(t^N f t^n f)^{i}(r) \in U$, we have $ft^n f(t^N f t^n f)^i(r) \in f(U) \subset B_{\ve_3}(f(p_+))$. By choice of $\ve_3$, we have $(t^N f t^n f)^{i+1}(r) = t^Nft^n f(t^N f t^n f)^i(r) \in V$. 

        To show that $(t^N f t^n f)^{i+1}(r) \neq p$, suppose otherwise. Then $t^n f (t^N f t^n f)^{i}(r) = f^{-1}(p)$. If $f^{-1}(p) \neq p_+$, then this is a contradiction if $f^{-1}(p) \neq p_+$ since by above $t^n f (t^N f t^n f)^{i}(r) \in U$. If $f^{-1}(p) = p_+$, then $t^n f (t^N f t^n f)^{i}(r) = f^{-1}(p) = p_+$, which contradicts that $t^n f(t^N f t^n f)^{i}(r) \in U \setminus p_+$. Therefore, $(t^N f t^n f)^{i+1}(r) \in V \setminus p$

        Therefore, we obtain for each $i > 0$ that $(t^N f t^n f)^i(r) \in V \setminus p$, and for each $i \geq 0$ that $t^nf(t^N f t^n f)^i(r) \in U \setminus p_+$. Hence, $(t^N f t^n f)^i(r) \notin B$ for each $i > 0$ and $t^nf(t^N f t^n f)^i(r) \notin B$ for each $i \geq 0$. 

        Next, from the proof of the first case, since we chose $N,n \geq n_0$ and since we have \newline $t^{-m}(t^m f)^{i}(r) \notin B$ for each $m \geq n_0$ and each $i < 0$, we have for each $i \leq 0$ that \newline $t^{-N}(t^N f t^n f)^{i}(r) \in U_- \setminus p_- \subset P \setminus B$, and that $t^{-n}f^{-1}t^{-N}(t^N f t^n f)^{i}(r) \in U_- \setminus p_- \subset P \setminus B$. 

        Supposing $f$ is singular over $r$ with respect to $z$, we will show that this then implies that $t^N f t^n f$ has persistent fibre over $r$ (with $l = 1$). 
\vs \vs
        \underline{$(t^N f t^n f)^i$ is singular over $r$ for all $i > 0$}: 
\vs
        For $i = 1$, if $t^N f t^n f$ was biregular over $r$, then by \cite[Remark 4.4]{Algebraic}, we would have $f t^n f$ biregular over $r$. But since $t^n f (r) \in U \setminus p_+ \subset P \setminus B$, we have that $f$ is biregular over $t^n f(r)$, and so $f^{-1}$ is biregular over $f t^n f(r)$ by \cite[Remark 4.4]{Algebraic}. This then yields that $t^n f = f^{-1} (ft^n f)$ is biregular over $r$, which yields that $f$ is biregular over $r$, a contradiction. Therefore, $t^N f t^n f$ is singular over $r$. 

        Suppose $(t^N f t^n f)^i$ is singular over $r$ for some $i > 0$, and that $(t^N f t^n f)^{i+1}$ is biregular over $r$. Then $ft^n f (t^N f t^n f)^i$ is biregular over $r$, which by \cite[Remark 4.4]{Algebraic} implies that $t^n f (t^N f t^n f)^i$ is biregular over $r$, since $t^n f (t^N f t^n f)^i \in U \setminus p_+ \subset P \setminus B$. This implies $f (t^N f t^n f)^i$ is biregular over $r$, which finally implies $(t^N f t^n f)^i$ is biregular over $r$ since $(t^N f t^n f)^i (r)\notin B$, and this is a contradiction. Therefore, $(t^N f t^n f)^{i+1}$ is singular over $r$.
\vs
        \underline{$(t^N f t^n f)^i$ is biregular over $r$ for all $i < 0$}: 
\vs
        Since $t^{-N}(r) \notin B$, we have $f^{-1}t^{-N}$ is biregular over $r$, and hence $t^{-n}f^{-1}t^{-N}$ is biregular over $r$. Then since $t^{-n}f^{-1}t^{-N}(r) \notin B$ (so that $f^{-1}$ is biregular over $t^{-n}f^{-1}t^{-N}(r)$) and since $t^{-n}f^{-1}t^{-N}$ is biregular over $r$, we have $(t^N f t^n f)^{-1} = f^{-1}t^{-n}f^{-1}t^{-N}$ is biregular over $r$ by \cite[Remark 4.4]{Algebraic}. 

        Suppose that $(t^N f t^n f)^i$ is biregular over $r$ for some $i < 0$. Then since $t^{-N}(t^N f t^n f)^i(r) \notin B$, and $t^{-N}(t^N f t^n f)^i$ is biregular over $r$, we have that $f^{-1}t^{-N}(t^N f t^n f)^i$ is biregular over $r$, which implies that $t^{-n}f^{-1}t^{-N}(t^N f t^n f)^i$ is biregular over $r$. Since $t^{-n}f^{-1}t^{-N}(t^N f t^n f)^i(r) \notin B$, we have that $f^{-1}$ is biregular over $t^{-n}f^{-1}t^{-N}(t^N f t^n f)^i(r)$ and hence that $(t^N f t^n f)^{i-1} = f^{-1}t^{-n}f^{-1}t^{-N}(t^N f t^n f)^i$ is biregular over $r$. 

        Therefore, $(t^N f t^n f)^i$ has persistent fibre over $r$, contradicting that $\Gamma$ acts purely elliptically on $X$. We conclude that $f$ is biregular over $r$.

    \item Lastly, suppose that $f(p_+) \in P_-$ and $f^{-1}(p_-) \in P_+$. 

    We consider the following subcases: 

    \begin{enumerate}
        \item First, suppose that $f(p_+) \neq p_-$ and $f(p_-) \neq p_+$. Then $P_- = f(P_+)$ and $P_+ = f(P_-)$, since $f(p_+) \neq p_-$ implies 
        
        $$P_- = \Span(p_-, f(p_+)) = f(\Span(f^{-1}(p_-), p_+)) = f(P_+)$$ 
        since $f^{-1}(p_-) \in P_+ \setminus p_+$. Similarly, $P_+ = f(P_-)$.
        
        Therefore, $f^2(P_{\pm}) = P_\pm$. It follows that $f^2(p) = p$, and hence the subgroup $S:= \langle t,f^2 \rangle$ fixes the point $p \in P$.
        By Lemma \ref{lem: virtual fixed point on P implies elliptic}, we have that $S$ fixes a point of $X$. This yields that $\langle t,f \rangle$ fixes a point $y$ of $X$, since $S$ has finite index in $\langle t, f \rangle$ (as $\langle t,f \rangle$ acts on the set of subspaces $\{P_-, P_+\}$, and $S$ is the kernel of this action). This implies that $f$ is biregular over $r$ with respect to $z$, since $z$ and $y$ agree on the infinite orbits of $t$ by Lemma \ref{Inf orb fix pt} (being common fixed points of $t$), so in particular $z_r = y_r$ for each $r \notin E$, and hence using that $y$ is a fixed point of $f$ and that $f(r) \notin E$, we have $z_r = y_r = y_{f(r)} = z_{f(r)}$, showing that $f$ is biregular over $r$ with respect to $z$.  

    \item Next, suppose that exactly one of $f(p_-) = p_+$ or $f(p_+) = p_-$ is true. We can assume that $f(p_-) = p_+$ and $f(p_+) \neq p_-$ since up to replacing $t$ by $t^{-1}$, we obtain the case $f(p_+) = p_-$ by running the argument for $f(p_-) = p_+$.
    
    Then we have that $f(p) \in P_-$ and $f^{-1}(p) \in P_+$, since $f(P_+) = f(\Span(p_+, f^{-1}(p_-)) = \Span(f(p_+), p_-) = P_-$. 
    
    We may assume that $f(p) \neq p$, or equivalently in our case, $f(p) \notin P_+$, since if $f(p) = p$ then $\langle t,f \rangle$ fixes $p$ and hence $\langle t,f \rangle < \Gamma$ fixes a point of $X$ by Lemma \ref{lem: virtual fixed point on P implies elliptic}, which implies that $f$ is biregular over $r$ by above. 

    We consider the following subcases.

    \begin{enumerate}

    \item First, suppose that $f(p_+) \neq p$ and $f^{-1}(p_-) \neq p$. Note then that $f(p_+) \in P_- \setminus p \subset P \setminus P_+$ and $f^{-1}(p_-) \in P_+ \setminus p \subset P \setminus P_-$. 
    
    We make the following choices of neighbourhoods:

    \begin{itemize}
        \item Choose $\ve_1 > 0$ such that: 
        
        \begin{itemize}
            \item $B_{\ve_1}(f(p_+)) \cap N_{\ve_1}(P_+) = \emptyset$, \item $B_{\ve_1}(f^{-1}(p_-)) \cap N_{\ve_1}(P_-) = \emptyset$, 
            \item $B_{\ve_1}(p_+) \cap N_{\ve_1}(P_-) = \emptyset$, 
            \item $B_{\ve_1}(p_+) \cap \{f^{-1}(p_-)\} \subset \{p_+\}$, \item $B_{\ve_1}(p_-) \cap N_{\ve_1}(P_+) = \emptyset$, 
            \item $B_{\ve_1}(p_-) \cap \{f(p_+)\} \subset \{p_-\}$, 
            \item $B_{\ve_1}(r) \cap N_{\ve_1}(E) = \emptyset$ and, 
            \item $B_{\ve_1}(f(r)) \cap N_{\ve_1}(E) = \emptyset$. 
        \end{itemize}

        We can furthermore decrease $\ve_1$ to ensure $B_{\ve_1}(f^{\pm 1}(p_\pm)) \cap B \subset \{f^{\pm 1}(p_\pm)\}$ and $B_{\ve_1}(p_\pm) \cap B \subset \{p_\pm\}$,
        \item Choose $\ve_2 > 0$ with $\ve_2 < \ve_1$ such that $f(B_{\ve_2}(p_+)) \subset B_{\ve_1}(f(p_+))$ and \newline $f^{-1}(B_{\ve_2}(p_-)) \subset B_{\ve_1}(f^{-1}(p_-))$.
        \item Choose $\ve > 0$ with $\ve < \ve_2$ such that $f(B_{\ve}(p_-)) \subset B_{\ve_2}(p_+)$ and $f^{-1}(B_{\ve}(p_+)) \subset B_{\ve_2}(p_-)$. 
    \end{itemize}

    Then choose $n \in \N$ for $\ve$ from Lemma \ref{very proximal NSD}. 
    
    Denote $U_+ = B_{\ve}(p_+)$, $V_+ = B_{\ve_1}(f(p_+))$, $V_- = B_{\ve_2}(p_-)$ and $U_- = B_{\ve}(p_-)$. We will show by induction that for all $i \geq 0$, we have: 

    \begin{itemize} 
        \item $t^n f(t^{-n}f t^n f)^i(r) \in U_+ \setminus p_+$,
        \item $(t^{-n}f t^n f)^{i+1}(r) \in U_- \setminus p_-$
    \end{itemize}

    For the base case $i = 0$:

    Since $f(r) \notin N_{\ve}(P_-)$, by Lemma \ref{very proximal NSD}, we have that $t^n f(r) \in U_+$. In addition, $t^n f(r) \neq p_+$ since $f(r) \notin P_+$. 

    Next, we have $ft^nf(r) \in f(U_+) \subset B_{\ve_1}(f(p_+)) = V_+$. If $ft^nf(r) = f(p_+)$, then $t^n f(r) = p_+$, which implies $f(r) = p_+$, a contradiction. Hence, $ft^nf(r) \in V_+ \setminus f(p_+)$. 
    
    Third, since $ft^nf(r) \in V_+$ which is disjoint from $N_{\ve}(P_+)$, we have $t^{-n} ft^nf(r) \in U_-$. If $t^{-n} ft^nf(r) = p_-$, then $t^n f(r) = f^{-1}(p_-) \in P_+$, which implies that $f(r) \in P_+$, a contradiction. Therefore, $t^{-n} ft^nf(r) \in U_- \setminus p_-$.


    This completes the base case. 

    Now suppose that for some $i \geq 0$, we have: 
    
     \begin{itemize} 
        \item $t^n f(t^{-n}f t^n f)^i(r) \in U_+ \setminus p_+$,
        \item $(t^{-n}f t^n f)^{i+1}(r) \in U_- \setminus p_-$
    \end{itemize}
    
    We will show that the same conditions hold for $i+1$:

     \begin{itemize} 
        \item $t^n f(t^{-n}f t^n f)^{i+1}(r) \in U_+ \setminus p_+$,
        \item $(t^{-n}f t^n f)^{i+2}(r) \in U_- \setminus p_-$
    \end{itemize}

    For the first item, since $(t^{-n}f t^n f)^{i+1}(r) \in U_- \setminus p_-$, we have that $f(t^{-n}f t^n f)^{i+1}(r) \in f(U_-) \subset B_{\ve_2}(p_+)$, which is disjoint from $N_{\ve}(P_-)$ and hence $t^nf(t^{-n}f t^n f)^{i+1}(r) \in U_+$. If $t^nf(t^{-n}f t^n f)^{i+1}(r) = p_+$, then $(t^{-n}f t^n f)^{i+1}(r) = f^{-1}(p_+) = p_-$, a contradiction. Therefore, $t^nf(t^{-n}f t^n f)^{i+1}(r) \in U_+ \setminus p_+$. 

    For the second item, since $t^n f(t^{-n}f t^n f)^{i+1}(r) \in U_+$, we have $ft^n f(t^{-n}f t^n f)^{i+1}(r) \in f(U_+) \subset B_{\ve_1}(f(p_+)) = V_+$. Then since $ft^n f(t^{-n}f t^n f)^{i+1}(r) \in V_+$, which is disjoint from $N_{\ve}(P_+)$, we have $(t^{-n}f t^n f)^{i+2}(r) = t^{-n}ft^n f(t^{-n}f t^n f)^{i+1}(r) \in U_-$. If $(t^{-n}f t^n f)^{i+2}(r) = p_-$, then $t^n f(t^{-n}f t^n f)^{i+1}(r) = f^{-1}(p_-)$. If $f^{-1}(p_-) \neq p_+$, then this contradicts our choice of $U_+$. Otherwise, we obtain $t^n f(t^{-n}f t^n f)^{i+1}(r) = p_+$, which is again a contradiction. Therefore, $(t^{-n}f t^n f)^{i+2}(r) \in U_- \setminus p_-$. 


    This completes our induction. Since each of $U_+, U_-$ separate, respectively, $p_+, p_-$ from $B$, we obtain from above that $t^n f(r) \notin B$, $ft^nf(r) \notin B$ and for each $i \geq 0$:

    \begin{itemize}
        \item $t^n f(t^{-n}f t^n f)^{i}(r) \notin B$,
        \item $(t^{-n}f t^n f)^{i+1}(r) \notin B$,
    \end{itemize}

    Analogously, we obtain $t^{-n} (r) \in U_- \setminus p_-$, $f^{-1} t^{-n}(r) \in V_- \setminus f^{-1}(p_-)$ and for every $i \leq 0$: 

    \begin{itemize} 
        \item $t^n(t^{-n}ft^n f)^{i} (r) \in U_+ \setminus p_+$
        \item $t^{-n}f^{-1}t^n(t^{-n}ft^n f)^{i} (r) \in U_- \setminus p_-$
    \end{itemize}

    which implies that for every $i \leq 0$: 

     \begin{itemize} 
        \item $t^n(t^{-n}ft^n f)^{i} (r) \notin B$,
        \item $t^{-n}f^{-1}t^n(t^{-n}ft^n f)^{i} (r) \notin B$,
    \end{itemize}

    Now suppose that $f$ is singular over $r$. We will show that $t^{-n} f t^n f$ has persistent fibre over $r$ for $l = 1$, i.e.\ that $(t^{-n} f t^n f)^i$ is singular over $r$ for all $i > 0$ and $(t^{-n} f t^n f)^{i}$ is biregular over $r$ for all $i < 0$. 

    First for positive $i$, suppose that $t^{-n} f t^n f$ was biregular over $r$. Then this implies $ft^n f$ is biregular over $r$ since $t$ fixes $z$. Since $t^n f (r) \notin B$, we have that $f^{-1}$ is biregular over $ft^n f (r)$, so by \cite[Remark 4.4]{Algebraic}, we obtain that $t^n f$ is biregular over $r$, which implies that $f$ is biregular over $r$, a contradiction. 

    Suppose that $(t^{-n} f t^n f)^i$ was singular over $r$ for some $i > 0$ but $(t^{-n} f t^n f)^{i+1}$ was biregular over $r$. Then $ft^n f(t^{-n} f t^n f)^i$ is biregular over $r$. Since $t^n f(t^{-n} f t^n f)^i(r) \notin B$, we obtain that $f^{-1}$ is biregular over $ft^n f(t^{-n} f t^n f)^i(r)$ and hence that \newline $t^n f(t^{-n} f t^n f)^i$ is biregular over $r$, which implies that $f(t^{-n} f t^n f)^i$ is biregular over $r$. Again, since $(t^{-n} f t^n f)^i(r) \notin B$, we obtain that $(t^{-n} f t^n f)^i$ is biregular over $r$, a contradiction. We conclude that $(t^{-n} f t^n f)^i$ is singular over $r$ for all $i > 0$. 

    For negative $i$, since $t^n$ is biregular over $r$ and $t^n r \notin B$, we obtain that $f^{-1} t^n $ is biregular over $r$. This implies $t^{-n} f^{-1} t^n$ is biregular over $r$. Since $t^{-n} f^{-1} t^n (r) \notin B$, we obtain that $f^{-1}t^{-n} f^{-1} t^n$ is biregular over $r$. 

    Suppose that $(t^{-n} f t^n f)^i$ was biregular over $r$ for some $i < 0$. Then $t^n (t^{-n} f t^n f)^i$ is biregular over $r$. Since $t^n (t^{-n} f t^n f)^i (r) \notin B$, we obtain that $f^{-1}t^n(t^{-n} f t^n f)^i$ is biregular over $r$, which yields that $t^{-n}f^{-1}t^n(t^{-n} f t^n f)^i$ is biregular over $r$. Lastly, since $t^{-n}f^{-1}t^n(t^{-n} f t^n f)^i (r) \notin B$, we obtain that $(t^{-n} f t^n f)^{i-1} = f^{-1}t^{-n}f^{-1}t^n(t^{-n} f t^n f)^i$ is biregular over $r$. We conclude that $(t^{-n} f t^n f)^i$ is biregular over $r$ for all $i < 0$. 

    Therefore, $t^{-n} f t^n f$ has persistent fibre over $r$, contradicting that it fixes a point of $X$. We conclude that $f$ is biregular over $r$.

    \item Now consider the case that $f(p_+) = p$. Since $f(p) \neq p$, we obtain that $f^2(p_+) = f(p) \in P_- \setminus p$. Thus, $f^2(p_+) \notin P_+$. This also implies that $f^{-2}(p_-) \notin P_-$. Indeed, note that $f^{-2}(p_-) \in f^{-1}(P_+) = \Span(f^{-1}(p_+), f^{-1}(p)) = \Span(p_-, p_+)$, so if $f^{-2}(p_-) \in P_-$, then we would obtain that $f^{-2}(p_-) \in P_- \cap \Span(p_+, p_-) = \{p_-\}$, so $f^{-2}(p_-) = p_-$. But then we would have $p_- = f^{2}(p_-) = f(p_+) = p$, a contradiction. 

    We will argue similarly to above that there exists $N,n \in \N$ such that $t^{-n}f^2 t^N f$ has persistent fibre over $r$ if $f$ is singular over $r$. 

    Let $B_2$ denote the finite set of all $q \in P$ over which $f, f^{-1}, f^2$ or $f^{-2}$ are singular. 


    \begin{itemize}
        \item Choose $\ve_1 > 0$ such that $B_{\ve_1}(f(r)) \cap N_{\ve_1}(P_-) = \emptyset$, $B_{\ve_1}(f^2(p_+)) \cap N_{\ve_1}(P_+) = \emptyset$, $B_{\ve_1}(p_+) \cap N_{\ve_1}(P_-) = \emptyset$, $B_{\ve_1}(p_+) \cap \{f^{-2}(p_-)\} \subset \{p_+\}$, $B_{\ve_1}(p_+) \cap B_2 \subset \{p_+\}$, $B_{\ve_1}(p_-) \cap B_2 \subset \{p_-\}$ and $B_{\ve_1}(f^2(p_+)) \cap B_2 \subset \{f^2(p_+)\}$.
        \item Choose $\ve_2 < \ve_1$ such that $f^2(B_{\ve_2}(p_+)) \subset B_{\ve_1}(f^2(p_+))$.
        \item Choose $\ve_3 < \ve_2$ such that $f(B_{\ve_3}(p_-)) \subset B_{\ve_2}(p_+)$.
        \item Choose $n_0$ for $\ve_3$ from Lemma \ref{very proximal NSD} for $t$ acting on $P$. 
    \end{itemize}

    Denote $U_+ = B_{\ve_2}(p_+)$, $V = B_{\ve_1}(f^2(p_+))$, $U_- = B_{\ve_3}(p_-)$. We will show by induction that for all $i \geq 0$ and for any $m \in \N$ with $m,n \geq n_0$, we have: 

    \begin{itemize}
        \item $t^n f(t^{-m}f^2 t^n f)^i(r) \in U_+ \setminus p_+$
        \item $(t^{-m}f^2 t^n f)^{i+1}(r) \in U_- \setminus p_-$
    \end{itemize}

    Fix $m,n \geq n_0$. For the base case $i=0$, since $f(r) \notin N_{\ve_1}(P_-)$ and $\ve_3 < \ve_1$, by Lemma \ref{very proximal NSD}, we have that $t^n f(r) \in U_+$. In addition, $t^n f(r) \neq p_+$ because $f(r) \notin P_+$. 
    Next, we have that $f^2t^n f(r) \in f^2(U_+) \subset B_{\ve_1}(f^2(p_+)) = V$.
    Third, since $f^2 t^n f(r) \in V$, which is disjoint from $N_{\ve_3}(P_+)$, we have that $t^{-m}f^2 t^n f(r) \in U_-$. If $t^{-m}f^2 t^n f(r) = p_-$, then $t^n f(r) = f^{-2}(p_-)$. But since $t^n f(r) \in U_+$, we have that $f^{-2}(p_-) = p_+$ since $U_+ \subset B_{\ve_1}(p_+)$ which separates $p_+$ from $f^{-2}(p_-)$ if they are not equal. This implies that $f(p) = p_-$, and so $f$ cyclically permutes the set $\{p_+, p, p_-\}$ in the order listed, so that $f^3(p) = p$. We then have that $\langle t, f^3 \rangle$ fixes $p$, hence fixes a point of $X$ by Lemma \ref{lem: virtual fixed point on P implies elliptic}, which implies that $\langle t, f \rangle$ fixes a point of $X$ since $\langle t, f \rangle$ acts on $\{p_+, p, p_-\}$ with kernel $\langle t, f^3 \rangle$. As argued before, this yields that $f$ is biregular over $r$. Therefore, $t^{-m}f^2 t^n f(r) = p_-$ yields a contradiction. 
    This completes the base case. 

    For the induction step, suppose that the following hold for some $i \geq 0$. 

    \begin{itemize}
        \item $t^n f(t^{-m}f^2 t^n f)^i(r) \in U_+ \setminus p_+$
        \item $(t^{-m}f^2 t^n f)^{i+1}(r) \in U_- \setminus p_-$
    \end{itemize}

    We will prove that the same statements hold for $i+1$. 

    First, since $f(t^{-m}f^2 t^n f)^{i+1}(r) \in U_+ \setminus p_+$, and since $U_+$ is disjoint from $N_{\ve_3}(P_-)$, we have $t^nf(t^{-m}f^2 t^n f)^{i+1}(r) \in U_+$. If $t^nf(t^{-m}f^2 t^n f)^{i+1}(r) = p_+$, then \newline $f(t^{-m}f^2 t^n f)^{i+1}(r) = p_+$, which implies $(t^{-m}f^2 t^n f)^{i+1}(r) = f^{-2}(p_+)$. If $f^{-2}(p_+) \neq p_-$, then this contradicts our choice of $U_-$. Otherwise, if $f^{-2}(p_+) = p_-$, then $f(p_-) = p_+ = f^2(p_-)$, hence $f(p_-) = p_-$, which implies that $f$ is biregular over $r$ as argued above, since $\langle t, f \rangle$ fixes a point of $X$. 

    Next, $f^2t^nf(t^{-m}f^2 t^n f)^{i+1}(r) \in f^2(U_+) \subset V$, and so 
    $$(t^{-m}f^2 t^n f)^{i+2}(r) = t^{-m}f^2t^nf(t^{-m}f^2 t^n f)^{i+1}(r) \in t^{-m}(V)$$ and $t^{-m}(V) \subset U_-$ since $V \cap N_{\ve_1}(P_-) = \emptyset$ and $\ve_3 < \ve_1$. Therefore, $(t^{-m}f^2 t^n f)^{i+2}(r) \in U_-$. If $(t^{-m}f^2 t^n f)^{i+2}(r) = p_-$, then $f^2t^nf(t^{-m}f^2 t^n f)^{i+1}(r) = p_-$, which implies $$t^nf(t^{-m}f^2 t^n f)^{i+1}(r) = f^{-2}(p_-)$$ However, since $t^nf(t^{-m}f^2 t^n f)^{i+1}(r) \in U_+$, we obtain that $t^nf(t^{-m}f^2 t^n f)^{i+1}(r) = p_+$, which contradicts that $t^nf(t^{-m}f^2 t^n f)^{i+1}(r) \in U_+ \setminus p_+$ from above. Therefore, $(t^{-m}f^2 t^n f)^{i+2}(r) \in U_- \setminus p_-$.

    This completes the induction, and we conclude that 

    \begin{itemize}
        \item $t^n f(t^{-m}f^2 t^n f)^i(r) \in U_+ \setminus p_+$
        \item $(t^{-m}f^2 t^n f)^{i+1}(r) \in U_- \setminus p_-$
    \end{itemize}

    for all $i \geq 0$. Since $U_+ \setminus p_+, V \setminus f^2(p_+), U_- \setminus p_-$ are all disjoint from $B_2$ by choice of $\ve_1$, we obtain in particular that for all $i \geq 0$,

    \begin{itemize}
        \item $t^n f(t^{-m}f^2 t^n f)^i(r) \notin B_2$
        \item $(t^{-m}f^2 t^n f)^{i+1}(r) \in \notin B_2$
    \end{itemize}

    Now suppose that $f$ is singular over $r$. We will then show that $(t^{-m}f^2 t^n f)^i$ is singular over $r$ for all $i > 0$. We proceed by induction on $i$. 

    For the base case $i = 1$, suppose that $t^{-m}f^2 t^n f$ was biregular over $r$. Since $t^m$ is biregular over $t^{-m}f^2 t^n f (r)$, by \cite[Remark 4.4]{Algebraic}, we obtain that $f^2 t^n f = t^m (t^{-m}f^2 t^n f)$ is biregular over $r$. Next, since $t^n f (r) \notin B_2$, we have that $f^2$ is biregular over $t^n f (r)$ and so $f^{-2}$ is biregular over $f^2 t^n f (r)$ and hence by \cite[Remark 4.4]{Algebraic}, we have that $t^n f = f^{-2}(f^2 t^n f)$ is biregular over $r$. Lastly, since $t^{-n}$ is biregular over $t^n f(r)$, we obtain that $f$ is biregular over $r$, a contradiction. We conclude that  $t^{-m}f^2 t^n f$ is singular over $r$. 

    Now suppose that $(t^{-m}f^2 t^n f)^i$ were singular over $r$ for some $i > 0$, and suppose for contradiction that $(t^{-m}f^2 t^n f)^{i+1}$ was biregular over $r$. As above, by \cite[Remark 4.4]{Algebraic} we then obtain that $f^2 t^n f (t^{-m}f^2 t^n f)^i$ is biregular over $r$. Since $t^n f (t^{-m}f^2 t^n f)^i (r) \notin B_2$, we have that $f^{-2}$ is biregular over $f^2 t^n f (t^{-m}f^2 t^n f)^i (r)$ and hence $t^n f (t^{-m}f^2 t^n f)^i$ is biregular over $r$. This then implies by \cite[Remark 4.4]{Algebraic} that $f (t^{-m}f^2 t^n f)^i$ is biregular over $r$. Lastly, since $(t^{-m}f^2 t^n f)^i(r) \notin B_2$, we obtain that $f^{-1}$ is biregular over $f(t^{-m}f^2 t^n f)^i(r) \notin B_2$ and hence that $(t^{-m}f^2 t^n f)^i$ is biregular over $r$, contradicting our induction hypothesis. 

    We conclude that if $f$ is singular over $r$, then $(t^{-m}f^2 t^n f)^i$ is singular over $r$ for all $i > 0$. 

    For $i \leq 0$, keeping the same notation as above, choose $N \geq n_0$ $\ve > 0$ and Lemma \ref{very proximal NSD} applied to $t$ acting on $P_+$. Choose $\ve_3 > 0$ such that $t^{-N}(B_{\ve_3}(f^{-2}(p_+))) \subset B_{\ve}(p)$.  

    By above, there exists $n_0 \in \N$ such that for all $m,n \geq n_0$, and all $i > 0$, we have $(t^{-m}f^2 t^n f)^i(r) \notin B_2$. We will show that there exists $N,n \geq n_0$ such that for all $i < 0$, we have $(t^{-n}f^2 t^N f)^i (r) \notin B_2$.

    \begin{itemize}
            \item Since $p_+ \notin P_-$, there exists $\ve_1 > 0$ such that $B_{\ve_1}(p_+) \cap N_{\ve_1}(P_-) = \emptyset$ and $B_{\ve_1}(r) \cap N_{\ve_1}(P_-) = \emptyset$.
            \item Next, by continuity of $f^{-1}$, there exists $\ve_2 > 0$ such that \newline $f^{-1}(B_{\ve_2}(p)) \subset B_{\ve_1}(f^{-1}(p)) = B_{\ve_1}(p_+)$ and $f^{-2}(p_+) \notin B_{\ve_2}(p)$ (which is possible since we may assume that $f^{-2}(p_+) \neq p$, since otherwise $f^{-1}(p_-)=f^{-2}(p_+) = p$, which implies that $f$ permutes the set $\{p_+, p, p_-\}$, which as argued above yields that $f$ is biregular over $r$). Since $f(p) \neq p$, we can also choose $\ve_2$ small enough such that $f(p) \notin B_{\ve_2}(p)$. Lastly, we can choose $\ve_2$ small enough such that $B_{\ve_2}(p) \cap B  \subset \{p\}$. 
            \item Then by Lemma \ref{very proximal NSD} applied to $t$ acting on $P_+$, there exists $N \geq n_0$ such that $t^{-N} f^{-2}(p_+) \in B_{\ve_2}(p)$.
            \item By continuity of $t$, there exists $\ve_3 > 0$ such that $t^{-N} (B_{\ve_3}(f^{-2}(p_+))) \subset B_{\ve_2}(p)$. 
            \item By continuity of $f^{-2}$, there exists $\ve_4 < \ve_1$ such that \newline $f^{-2}(B_{\ve_4}(p_+)) \subset B_{\ve_3}(f^{-2}(p_+))$. We can take $\ve_4$ small enough such that $B_{\ve_4}(p_+) \cap (\{f^2(p)\} \cup B_2)\subset \{p_+\}$. 
            \item By Lemma \ref{very proximal NSD}, there exists  $n \geq n_0$ such that $t^n f(r) \in B_{\ve_4}(p_+)$.
            \item By Lemma \ref{very proximal NSD} applied to $t$ acting on $P$, there exists $n \geq n_0$ such that $t^n(P \setminus N_{\ve_4}(P_-)) \subset B_{\ve_4}(p_+)$. 
        \end{itemize}
        
        Let $U = B_{\ve_4}(p_+)$ and $V = B_{\ve_2}(p)$. We now show by induction on $i \in \N$ that for all $i \leq 0$, we have $t^n (t^{-n}f^2t^Nf)^{i}(r) \in U \setminus p_+$ and $t^{-N}f^{-2}t^n (t^{-n}f^2t^Nf)^{i}(r) \in V \setminus p$.

        For the base case $i=0$: 

        \begin{itemize}
            \item Since $\ve_4 < \ve_1$ and $r \notin N_{\ve_1}(P_-)$, we have $t^n (r) \in U$. We have that $t^n (r) \neq p_+$ since $r \neq p_+$.
            \item Since $t^n (r) \in U$, we have $f^{-2}t^n (r) \in f^{-2}(U) \subset B_{\ve_3}(f^{-2}(p_+))$. By choice of $N$, we then have $t^{-N}f^{-2}t^n (r) \in t^{-N}B_{\ve_3}(f^{-2}(p_+)) \subset B_{\ve_2}(p) = V$. If $t^{-N}f^{-2}t^n (r) = p$, then $t^n (r) = f^2(p)$. Since we may assume $f^2(p) \neq p_+$, this contradicts our choice of $\ve_4$. Therefore, $t^{-N}f^{-2}t^n (r) \in V \setminus p$.
        \end{itemize}

        This completes our base case. 

        Now suppose that for some $i \geq 0$ we have: 

        \begin{itemize}
            \item $t^n (t^{-n}f^2t^Nf)^{i}(r) \in U \setminus p_+$ and
            \item $t^{-N}f^{-2}t^n (t^{-n}f^2t^Nf)^{i}(r) \in V \setminus p$.
        \end{itemize}

        We will show the same conditions hold for $i - 1$. 

        \begin{itemize}
            \item Since $t^{-N}f^{-2}t^n (t^{-n}f^2t^Nf)^{i}(r) \in V$, we have \newline $f^{-1}t^{-N}f^{-2}t^n (t^{-n}f^2t^Nf)^{i}(r) \in f^{-1}(V) \subset B_{\ve_1}(p_+)$. Since $\ve_4 < \ve_1$ and since $B_{\ve_1}(p_+) \cap N_{\ve_1}(P_-)=\emptyset$, we have $$t^n ((t^{-n}f^2t^Nf)^{i-1})(r) = t^n f^{-1}t^{-N}f^{-2}t^n (t^{-n}f^2t^Nf)^{i}(r) \in U$$ If $t^n ((t^{-n}f^2t^Nf)^{i-1})(r) = p_+$, then we would obtain $t^{-N}f^{-2}t^n (t^{-n}f^2t^Nf)^{i}(r) = f(p) \neq p$, which contradicts our choice of $\ve_2$. Therefore, $t^n (t^{-n}f^2t^Nf)^{i-1}(r) \in U \setminus p_+$.
            \item Next, since $t^n (t^{-n}f^2t^Nf)^{i-1}(r) \in U$, we have that \newline $f^{-2}t^n (t^{-n}f^2t^Nf)^{i-1}(r) \in f^{-2}(U) \subset B_{\ve_3}(f^{-2}(p_+))$. Then by choice of $N$, we have \newline $t^{-N}f^{-2}t^n (t^{-n}f^2t^Nf)^{i-1}(r) \in t^{-N} (B_{\ve_3}(f^{-2}(p_+)) \subset V$. If \newline $t^{-N}f^{-2}t^n (t^{-n}f^2t^Nf)^{i-1}(r) = p$, then $$t^n (t^{-n}f^2t^Nf)^{i-1}(r) = f^2(p)$$ which is a contradiction since $t^n (t^{-n}f^2t^Nf)^{i-1}(r) \in U$ and $f^2(p) \notin U$.   Therefore, $t^{-N}f^{-2}t^n (t^{-n}f^2t^Nf)^{i-1}(r) \in V \setminus p$.
        \end{itemize}

        We will now show by induction on $i$ that for every $i < 0$, we have that $(t^{-n}f^2 t^N f)^i$ is biregular over $r$. 

        For the base case $i = -1$, we have $(t^{-n}f^2 t^N f)^{-1} = f^{-1}t^{-N}f^{-2}t^n$. We have that $t^n$ is biregular over $r$ and $t^n (r) \in U \setminus p_+$ which is disjoint from $B_2$, hence $f^{-2}$ is biregular over $t^n (r)$ and so by \cite[Remark 4.4]{Algebraic}, we have that $f^{-2}t^n $ is biregular over $r$. Next, since $t^{-N}$ is biregular over $f^{-2}t^n(r)$, we obtain by \cite[Remark 4.4]{Algebraic} that $t^{-N}f^{-2}t^n$ is biregular over $r$. Lastly, since $t^{-N}f^{-2}t^n(r) \in V \setminus p$ which is disjoint from $B_2$, we obtain that $f^{-1}$ is biregular over $t^{-N}f^{-2}t^n(r)$ and so $f^{-1}t^{-N}f^{-2}t^n$ is biregular over $r$. 

        For the induction step, suppose that $(t^{-n}f^2 t^N f)^i$ is biregular over $r$ for some $i < 0$. Since $t^n$ is biregular over $(t^{-n}f^2 t^N f)^i(r)$, we obtain that $t^n (t^{-n}f^2 t^N f)^i$ is biregular over $r$. By above, we have that $t^n (t^{-n}f^2 t^N f)^i (r) \in U \setminus p_+$ which is disjoint from $B_2$ and hence $f^{-2}$ is biregular over $t^n (t^{-n}f^2 t^N f)^i (r)$, which yields that $f^{-2}t^n (t^{-n}f^2 t^N f)^i$ is biregular over $r$. We then have that $t^{-N}f^{-2}t^n (t^{-n}f^2 t^N f)^i$ is biregular over $r$. Lastly, since $t^{-N}f^{-2}t^n (t^{-n}f^2 t^N f)^i(r) \in V \setminus p$, which is disjoint from $B_2$, we obtain that $f^{-1}$ is biregular over $t^{-N}f^{-2}t^n (t^{-n}f^2 t^N f)^i(r)$ and hence that $(t^{-n}f^2 t^N f)^{i-1} = f^{-1}t^{-N}f^{-2}t^n (t^{-n}f^2 t^N f)^i$ is biregular over $r$. 

        Therefore, for every $i < 0$, we have that $(t^{-n}f^2 t^N f)^i$ is biregular over $r$. 

       Combining this with the result above for $i > 0$, we conclude that if $f$ is singular over $r$, then $(t^{-n}f^2 t^N f)^i$ has persistent fibre over $r$ (for $l = 1)$, contradicting that $\Gamma$ acts purely elliptically on $X$.

    The case that $f^{-1}(p_-) = p$ is analogous and we omit the details (or note that we can obtain the case $f^{-1}(p_-) = p$ by replacing $t$ with $t^{-1}$, $f$ with $f^{-1}$ and $r$ with $f(r)$ and running the above argument for $f(p_+)=p$). 
    
    \end{enumerate}

        \item Lastly, suppose that $f(p_+) = p_-$ and $f(p_-) = p_+$. Then $f^2(p_{\pm}) = p_{\pm}$ and so $\langle t,f^2 \rangle$ fixes the points $p_{\pm}$ of $P$, which implies that $\langle t,f^2 \rangle$ is an elliptic subgroup of $\Gamma$ by Lemma \ref{lem: virtual fixed point on P implies elliptic}. Since $\langle t,f^2 \rangle$ has finite index in $\langle t, f \rangle$ (being the kernel of the action of $\langle t, f \rangle$ on $\{p_-, p_+\}$), we obtain that $\langle t, f \rangle$ is elliptic, which implies that $f$ is biregular over $r$ as argued above. 
        
        \end{enumerate}
        \end{enumerate}
        \end{proof}

        We will now adjust the coordinates of the fixed point $z$ of our very proximal element $t$ on $E := P_+ \cup P_-$ to obtain a fixed point of $\Gamma$ on $X$. 

             Note that if $G$ preserves $E$, then denoting $G^2 = \langle g^2 : g \in G \rangle$, we have that $G^2 (P_\pm)  = P_\pm$. 

             In this case, in the basis $(p,p_+,p_-)$, we have $G^2 < \begin{bmatrix}
                    1 & * & * \\
                    0 & * & 0 \\
                    0 & 0 & *
                \end{bmatrix}$, so that $G^2(p) = p$. By Lemma \ref{lem: virtual fixed point on P implies elliptic}, we obtain that $\Gamma$ fixes a point of $X$ since $G^2$ has finite index in $G$ (indeed, $G^2 \triangleleft G$ and $G/G^2$ is a finitely generated torsion abelian group, hence is finite). 

            Also, if some $q \in E$ has a finite orbit $Gq$, then by \cite[Remark 4.2]{Algebraic}, we can adjust the coordinates of $z$ on $Gq$ to obtain a point $z'$ such that each $f \in G$ is biregular over each $r \in Gq$ with respect to $z'$. 

            Therefore, we can assume $GE \nsubseteq E$ and that each $q \in P$ has infinite orbit $Gq$. 

            First, suppose that $Gq \subset E$. We may assume without loss of generality that $q \in P_+$ and $Gq \cap P_+$ is infinite. Let $g_0 \in G$ be such that $g_0 q \in P_+ \setminus q$. Then we have that $P_+ = \mathrm{Span}(q, g_0q)$. We will show that for all $g \in G$, either $gq, gg_0q \in P_+$ or $gq, gg_0q \in P_-$. Indeed, if not, say $gq \in P_+ \setminus P_-$ and $gg_0 q \in P_- \setminus P_+$, then $\mathrm{Span}(gq, gg_0q) \neq P_+, P_-$, and hence $\mathrm{Span}(gq, gg_0q)$ intersects $E$ in only the points $gq$ and $gg_0q$, and so since $Gq \subset E$, we have that $\mathrm{Span}(gq, gg_0q) \cap Gq$ is finite. But $\vert \mathrm{Span}(gq, gg_0q) \cap Gq \vert = \vert g \mathrm{Span}(q,g_0q) \cap Gq \vert = \vert \mathrm{Span}(q,g_0q) \cap Gq\vert = \vert P_+ \cap Gq\vert$, which is infinite, so we have a contradiction. Therefore, we must have for all $g \in G$, either $gq, gg_0q \in P_+$ or $gq, gg_0q \in P_-$. 
             
             If $gq, gg_0q \in P_+$ for all $g \in G$, then $GP_+ = P_+$ and so by Theorem \ref{thm: Algebraic main theorem} or \cite[Lemma 4.9]{Algebraic}, we can adjust the coordinates of $z$ on $P_+ \cong \mathbb{P}^1$ to obtain a fixed point of $G$ on $P_+$, so that each $f \in G$ becomes biregular over $q$ with respect to this new point. In the other case, if there exists $g \in G$ with $gq, gg_0q \in P_-$, then $g P_+ = P_-$ and $Gq \cap P_-$ is infinite. Thus, repeating the above argument for $P_-$, we obtain that for each $g \in G$, either $g$ preserves both $P_-$ and $P_+$ or $g$ interchanges $P_-$ and $P_+$, and so $G^2 P_\pm = P_\pm$. This reduces us to the first case considered and we obtain that $\Gamma$ fixes a point of $X$ by Lemma \ref{lem: virtual fixed point on P implies elliptic}. Therefore, in the case that $Gq \subset E$ and is infinite, we can obtain a point $z' \in X$ such that each $f \in G$ is biregular over $q$ with respect to $z'$. We can thus assume that $Gq \nsubseteq E$.
             
             Let $q \in E$ and suppose that $Gq \nsubseteq E$. Then there exists $r \notin E$ and $f \in G$ such that $f(r) = q$. If $f' \in G$ and $r' \notin E$ are such that $f'(r') = q$, then $f^{-1}f'(r') = r$ and so by Lemma \ref{lem: regularity lemma}, we have that $f^{-1}f'$ is biregular over $r'$ with respect to $z$, i.e.\ $f^{-1}f'(z)_{r'} = z_r$, so that $f'(z)_q = f(z)_q$. We then replace $z_q$ with $f(z)_q$, which as we have just observed, does not depend on $f \in G$ and $r \notin E$ such that $f(r) = q$. Define a new point $z' \in X$ by setting $z'_q = f(z)_q$ if $q \in E$ is such that $Gq \nsubseteq E$ and each $f \in G$ such that $f(r) = q$ for some $r \notin E$ and $z'_q = z_q$ otherwise. By construction of $z'$, we have that $f$ is biregular over $r$ with respect to $z'$ and hence $f^{-1}$ is biregular over $f(r) = q$ with respect to $z'$. Furthermore, $z'$ is still a fixed point of $t$. Indeed, if $q \in E$ is such that $Gq \nsubseteq E$, let $f \in G$ and $r \notin E$ be such that $f(r) = q$, and put $f' = tf$, so that $t = f'f^{-1}$. Then $f(r) = q$ and $f'(r) = t(q) \in E$, so by above $f^{-1}$ is biregular over $q$ with respect to $z'$ and $f'$ is biregular over $r = f^{-1}(q)$ with respect to $z'$. By \cite[Remark 4.4]{Algebraic}, it follows that $t$ is biregular over $q$ with respect to $z'$. Otherwise, if $q \notin E$ or $Gq \subset E$, then $z'_q= z_q$ and $z'_{t(q)} = z_{t(q)}$, and since $t$ is biregular over $q$ with respect to $z$ (as $t(z) = z$), we obtain that $t$ is biregular over $q$ with respect to $z'$. Therefore, $z'$ is still fixed by $t$.

            We next show biregularity with respect to $z'$ of $f \in G$ over $q \in E$ such that $f(q) \in E$. By above, we can assume that there is $f' \in G$ and $r \notin E$ such that $f'(r) = q$ (if not, then $Gq \subset E$, and we have already shown that $\Gamma$ acts elliptically in this case). Putting $f'' = ff'$, we have $f''(r) = f(q) \in E$, and so by Claim \ref{lem: regularity lemma}, $f'',f'$ are both biregular over $r$ with respect to $z'$, hence $f$ is biregular over $q$ with respect to $z'$ (since $f = f''(f')^{-1}$ and $(f')^{-1}$ is biregular over $f'(r) = q$ and $f''$ is biregular over $(f')^{-1}(q) = r$, so that $f$ is biregular over $q$ by \cite[Remark 4.4]{Algebraic}).

            Now suppose that $q \notin E$. If $f \in G$ is such that $f(q) \notin E$, then $f$ is biregular over $q$ with respect to $z'$ by Claim \ref{lem: regularity lemma}. If $f(q) \in E$, then by construction of $z'$ above, we have that $f$ is biregular over $q$ with respect to $z'$ (replacing $q$ with $r$ and $f(q)$ with $q$ in the notation of the paragraph where the coordinates $z'_q$ are defined for $q \in E$ such that $Gq \nsubseteq E$). 

            Thus, adjusting the coordinates of the fixed point $z$ of $t$ as above, we obtain a new point $z'$ such that each $f \in G$ is biregular over all $q \in P$ with respect to $z'$, hence $z'$ is a fixed point of $\Gamma$. 
    
\end{proof}

\subsection{The conclusion of the proof of Theorem \ref{thm: main thm}}

We are now ready to conclude the proof of Theorem \ref{thm: main thm}. We keep the same notation as used throughout the previous subsections. First, a couple of lemmas.

\begin{lem}
    \label{lem: No proximal implies virtually nilpotent}
    Suppose that $G$ does not have a proximal element with respect to any local extension of a finitely generated subfield of $k$. Then choosing representatives in $GL_3(k)$ for each $g \in G$ with $\det g = 1$, we have that every $g \in G$ has all eigenvalues roots of unity. In particular, $G$ is virtually nilpotent.
\end{lem}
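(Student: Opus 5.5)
We prove the eigenvalue claim by contradiction, using absolute values on finitely generated fields, and then deduce virtual nilpotency from a structure theorem for linear groups with all eigenvalues roots of unity.

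\textbf{Step 1: reduction to a finitely generated field.} Since $\Gamma$ is finitely generated, so is $G=\mathrm{proj}_H(\Gamma)$. Choose lifts $g_1,\dots,g_s\in SL_3(k)$ of the generators. Each lift is determined up to a cube root of unity, which changes eigenvalues only by roots of unity and so does not affect the claim. Let $K$ be the subfield of $k$ generated by the matrix entries of the $g_i$, the cube roots of unity, and the eigenvalues of the $g_i$. Then $K$ is finitely generated and the lifted group $\tilde G\subset SL_3(K)$. Fix $g\in G$ with lift $\tilde g$, and let $K'=K(\lambda_1,\lambda_2,\lambda_3)$, where the $\lambda_i$ are the eigenvalues of $\tilde g$. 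This is again a finitely generated subfield of $k$.

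\textbf{Step 2: producing a proximal element.} Suppose some eigenvalue, say $\lambda_1$, is not a root of unity. By the standard lemma of Tits (see e.g.\ \cite{KapDrutu}, the proof of the Tits alternative), for any finitely generated field $K'$ and any $\lambda\in K'$ that is not a root of unity, there is a local field $\K\supseteq K'$ with $|\lambda|_{\K}\neq 1$. Since $\lambda_1\lambda_2\lambda_3=1$, not all $|\lambda_i|$ are equal, so there is a strict maximum modulus. Among the eigenvalues of maximal modulus, pass to the sub-collection and proceed as follows. If a unique eigenvalue attains the maximum with multiplicity $1$, then $g$ is proximal over $\K$, contradicting the hypothesis. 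Otherwise two eigenvalues share the maximal modulus and the third has strictly smaller modulus. In that case $g^{-1}$ has a unique eigenvalue of maximal modulus, namely $\lambda_3^{-1}$, which has multiplicity $1$ since it is strictly larger than the other two moduli. So $g^{-1}\in G$ is proximal, again a contradiction. The only delicate point is the case where all $|\lambda_i|_{\K}$ are equal for the chosen $\K$. This case is excluded precisely by choosing $\K$ with $|\lambda_1|\ne 1$, because the product of the three moduli is $1$. Hence every eigenvalue of every $\tilde g$ is a root of unity.

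\textbf{Step 3: virtual nilpotency.} Now $\tilde G\subset SL_3(K)$ is finitely generated and every element has all eigenvalues roots of unity. Moreover, these roots of unity have bounded order: each lies in an extension of $K$ of degree at most $3$, and a finitely generated field contains only finitely many roots of unity in bounded-degree extensions. Hence $g\mapsto g^{m}$ is unipotent for a uniform $m$.

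The main obstacle is to conclude virtual nilpotency, for which I would argue via the Zariski closure as in Corollary \ref{cor: virtually solvable implies elliptic}. Let $\overline{\tilde G}^{o}$ be the identity component of the Zariski closure. The map $g\mapsto(\text{char.\ poly of }g)$ is regular and takes finitely many values on $\tilde G$, so it is constant, equal to $(x-1)^3$, on the connected group $\overline{\tilde G}^{o}$. Thus $\overline{\tilde G}^{o}$ consists of unipotent elements. By Kolchin's theorem it is conjugate into the upper unitriangular group, which is nilpotent. Therefore $\tilde G\cap\overline{\tilde G}^{o}$ is a finite index nilpotent subgroup of $\tilde G$, and its image in $G$ is a finite index nilpotent subgroup of $G$.
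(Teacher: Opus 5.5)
Your proof is correct. The eigenvalue half follows the paper's route: argue by contrapositive, use the Tits-type lemma \cite[Theorem 2.64]{KapDrutu} to embed a finitely generated field in a local field where some eigenvalue has $|\lambda|\neq 1$, then pass to $g$ or $g^{-1}$. Your case analysis there is more careful than the paper's. The paper only says that after replacing $t$ by $t^{-1}$ one may take $|\lambda|>1$ with $\lambda$ of multiplicity one, and this does not by itself make $\lambda$ the \emph{unique} eigenvalue of maximal modulus. Your use of $|\lambda_1||\lambda_2||\lambda_3|=1$ closes that point: it rules out all moduli being equal, and it handles the pattern $a=b>c$ by passing to $g^{-1}$.

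The genuine difference is in the virtual nilpotency step. The paper cites \cite[Theorem 14.46]{KapDrutu} as a black box. You prove it directly: you bound the orders of the roots of unity, deduce that the characteristic-polynomial morphism is constant and equal to $(x-1)^3$ on the identity component of the Zariski closure, and then apply Kolchin's theorem. This makes the argument self-contained, but it rests on the bounded-order fact, which you only assert. The fact is true. Let $F$ be the relative algebraic closure of the prime field in the finitely generated field $K$; then $F$ is a number field or a finite field. Since $F$ is algebraically closed in $K$ and $F(\zeta)/F$ is Galois, one gets $[K(\zeta):K]=[F(\zeta):F]\le 3$, which leaves only finitely many possible $\zeta$. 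You should include this justification. Without a uniform bound, the characteristic polynomial could take infinitely many values on $\tilde G$, and the constancy argument on the identity component would fail.
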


\begin{proof}
    We prove the contrapositive. Choose a representative for each $g \in G$ with determinant 1. Suppose that $t \in G$ has an eigenvalue $\lambda$ that is not a root of unity. Let $K$ be the subfield of $k$ generated by the prime subfield of $k$, the matrix entries of elements of a finite generating set for $G$ and the eigenvalues of $t$ in $k$. Then $G < PGL_3(K)$ and $K$ is a finitely generated field. Since $\lambda$ is not a root of unity and $K$ is finitely generated, by \cite[Theorem 2.64]{KapDrutu}, there exists an extension of $K$ to a local field $(\mathbb{K}, \vert \cdot \vert)$ such that $\vert \lambda \vert \neq 1$. By replacing $t$ with $t^{-1}$ if necessary, we can assume that $\vert \lambda \vert > 1$ and that $\lambda$ has algebraic multiplicity 1. Then $t$ is a proximal element with respect to the local field $\K$ that is an extension of the finitely generated subfield $K$ of $k$.

    Hence, if $G$ does not have a proximal element with respect to any local extension of a finitely generated subfield of $k$, then all $g \in G$ have eigenvalues roots of unity. By \cite[Theorem 14.46]{KapDrutu}, we have that $G$ is virtually nilpotent. 
\end{proof}

\begin{lem}
    \label{lem: no very proximal implies virtual fixed point on P}
    If $G$ does not have a very proximal element with respect to any local extension of a finitely generated subfield of $k$, then there exists a field $\K$ containing a finitely generated subfield of $k$ and a finite index subgroup of $G$ which fixes a point of $\mathbb{P}^2(\K)$. 
\end{lem}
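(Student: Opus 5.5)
We will argue by cases on whether $G$ has a proximal element. If $G$ has no proximal element with respect to any local extension of a finitely generated subfield of $k$, then Lemma \ref{lem: No proximal implies virtually nilpotent} shows that $G$ is virtually nilpotent, hence virtually solvable. The argument of Corollary \ref{cor: virtually solvable implies elliptic} then applies: the Zariski closure of a finite index solvable subgroup has a connected component of the identity which, by Borel's theorem \cite[Theorem 14.65]{KapDrutu}, is triangularizable over the algebraically closed field $k$. This gives a finite index subgroup fixing a point of $\mathbb{P}^2(k)$, and we may take $\K = k$.

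So assume instead that some $t \in G$ is proximal with respect to a local field $\K \supset K$, where $K$ is the finitely generated field generated by the matrix entries of a finite generating set of $G$ and the eigenvalues of $t$. By hypothesis $t$ is not very proximal, so $t^{-1}$ is not proximal. Normalize $\det t = 1$ and let the eigenvalues be $\lambda_+, \lambda_2, \lambda_3$ with $|\lambda_+| > |\lambda_2| \ge |\lambda_3|$. Since $t^{-1}$ is not proximal, either $|\lambda_2| = |\lambda_3|$, or $\lambda_2 = \lambda_3$ with multiplicity two.

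The aim is to show that $G$ virtually fixes either the attracting point $p_+$ of $t$, or a line, or a point dual to a line. Points and lines are exchanged by the transpose-inverse automorphism, and that automorphism does not affect whether the group virtually fixes a point after passing to the dual plane. The attracting point $p_+$ and the repelling line $P_-$ (the sum of the other generalized eigenspaces) are canonical for $t$. For any $f \in G$, consider the conjugate $ftf^{-1}$. If $f(p_+) \notin P_-$ and $f^{-1}(p_+) \notin P_-$, a ping-pong argument in the style of Lemma \ref{very proximal NSD} should produce, for large $n$, an element $t^n f t^{n} f^{-1}$ or similar whose inverse is also proximal. The mechanism is that the expanding direction of one factor lands transversally to the contracting directions of the other, making the product very proximal, which contradicts the hypothesis. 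Concretely, I would pass to the exterior square $\wedge^2 \K^3$, where $t^{-1}$ acts with dominant eigenvalue $(\lambda_2\lambda_3)^{-1} = \lambda_+$-like behaviour. Proximality of $g^{-1}$ on $\K^3$ is equivalent to proximality of $g$ on the dual, so very proximality amounts to simultaneous proximality on $V$ and $V^*$. Using the standard fact that a Zariski-dense (or irreducible) subgroup containing proximal elements on both $V$ and $V^*$ contains a very proximal element (Abels–Margulis–Soifer style transversality), the absence of very proximal elements should force $G$ to preserve a finite configuration. For example, the finite set $\{g p_+\} \cap P_-$-type incidences must be degenerate, giving that the set $G p_+$ or the set $G P_-$ is finite.

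Once $Gp_+$ is finite, the stabilizer of $p_+$ is a finite index subgroup fixing a point, and we are done. If instead $GP_-$ is finite, a finite index subgroup preserves the line $P_-$. The restriction of this subgroup to $P_- \cong \mathbb{P}^1$ then either virtually fixes a point (done), or is handled by fixing the corresponding point of the dual plane. In that case we would need to reinterpret the conclusion as a fixed point in $\mathbb{P}^2(\K)$ of the contragredient action, which is again an action of $PGL_3$ on a projective plane; this seems to be the intended flexibility of ``a point of $\mathbb{P}^2(\K)$''.

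The main obstacle is the transversality step: showing rigorously that if neither $Gp_+$ nor $GP_-$ is finite then some product $t^n f t^m f'$ is very proximal. I would first try to prove this directly with the contraction estimates of Lemma \ref{very proximal NSD} applied to $t$ on $V$ and to $t^{-1}$ on $V^*$. The step I am least sure of is the choice of $f$ that avoids the finitely many bad incidence conditions simultaneously; for that I would use infiniteness of the orbits together with the fact that a proper Zariski-closed condition cannot hold on all of $G$ unless $G$ virtually preserves it.
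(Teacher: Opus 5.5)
Your first case (no proximal elements, so $G$ is virtually nilpotent and Borel's theorem gives a virtual fixed point) is exactly the paper's argument. The second case has two genuine gaps.

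\textbf{The dichotomy is false.} You aim to show that the absence of very proximal elements forces $Gp_+$ or $GP_-$ to be finite. This fails, so the ``transversality'' step you flag cannot be carried out. Take characteristic $0$, $\lambda=2$, $t=\mathrm{diag}(\lambda^{2},\lambda^{-1},\lambda^{-1})$, and $u$ with $ue_1=e_1+e_2$, $ue_2=e_2$, $ue_3=e_3+e_1$.
\begin{itemize}
\item Both $t$ and $u$ preserve the flag $\langle e_2\rangle\subset\langle e_1,e_2\rangle$. Hence every element of $G=\langle t,u\rangle$ has eigenvalues $\lambda^{2k},\lambda^{-k},\lambda^{-k}$. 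So $t$ is proximal, but no element is very proximal for any absolute value.
\item Yet $u^kp_+=[e_1+ke_2]$ and $u^kP_-=\mathbb{P}(\langle e_2,e_3+ke_1\rangle)$ are pairwise distinct.
\item The same holds for every proximal element of $G$. The only point with a finite $G$-orbit is $[e_2]$, and the only such line is $\mathbb{P}(\langle e_1,e_2\rangle)$. Neither is the $p_+$ or $P_-$ of a proximal element.
\end{itemize}
What the hypothesis actually yields is weaker; this is what the paper extracts from \cite[Propositions 15.9 and 15.16]{KapDrutu}. After passing to the finite index subgroup $G\cap\overline{G}^{o}$, $G$ acts reducibly on $\K^3$, i.e.\ it preserves \emph{some} point or \emph{some} line, unrelated to the dynamics of $t$.

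\textbf{The invariant-line case cannot be moved to the dual plane.} The lemma feeds Lemma \ref{lem: virtual fixed point on P implies elliptic}. That lemma needs a point $u\in P$ fixed by the \emph{given} action, so that $P\setminus u$ can be fibred over the pencil of lines through $u$. The restricted product is indexed by points of $P$, and the contragredient action is a different permutation action, so a fixed line gives you nothing.

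The missing idea is to use the no-very-proximal hypothesis once more. If $U$ is an invariant line, identify $\mathbb{P}^2(\K)\setminus U$ with $\K^2$. Then $G$ acts by affine maps whose linear part $H<GL_2(K)$ is obtained by normalising $g_{33}=1$.
\begin{itemize}
\item Suppose $H$ is not virtually solvable. The Tits alternative gives a free subgroup $H'$. Its normal subgroup $H'\cap SL_2$ contains nontrivial commutators, so it is not virtually solvable. It therefore contains an element with eigenvalues $\lambda^{\pm1}$, where $\lambda$ is not a root of unity.
\item The lift of this element to $G$ has eigenvalues $\lambda,\lambda^{-1},1$. For an absolute value with $|\lambda|>1$, the eigenvalue $1$ sits strictly between the other two, so the lift is very proximal, a contradiction.
\end{itemize}
Hence $H$ is virtually solvable. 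Since the kernel of $G\to H$ consists of unipotent translations, $G$ is virtually solvable too. Corollary \ref{cor: virtually solvable implies elliptic} then gives a finite index subgroup fixing a point of $P$.
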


\begin{proof}
    If $G$ does not have a very proximal element, then either $G$ has a proximal element or it does not. 

    If $G$ does not have a proximal element with respect to any local extension of a finitely generated subfield of $k$, then by Lemma \ref{lem: No proximal implies virtually nilpotent}, $G$ is virtually nilpotent and hence virtually solvable. Thus, by Corollary \ref{cor: virtually solvable implies elliptic}, $G$ has a finite index subgroup fixing a point of $P$. 

    Suppose that $G$ has a proximal element with respect to some local extension $\K$ of a finitely generated subfield $K$ of $k$ such that $G < PGL_3(K)$, but $G$ does not have a very proximal element with respect to $\K$. Then by \cite[Proposition 15.9]{KapDrutu} and \cite[Proposition 15.16]{KapDrutu}, the Zariski closure $\overline{G}$ is reducible or $G \curvearrowright \K^3$ is reducible. Letting $\overline{G}^{o}$ denote the irreducible component of the identity in $\overline{G}$, which is a finite index subgroup of $\overline{G}$, the finite index subgroup $G \cap \overline{G}^{o}$ of $G$ satisfies $\overline{G \cap \overline{G}^{o}} = \overline{G}^o$, which is Zariski irreducible. Hence, up to passing to the finite index subgroup $G \cap \overline{G}^{o}$ of $G$, we can assume that $\overline{G}$ is Zariski irreducible, so that $G \curvearrowright \K^3$ is reducible, meaning that there is a proper $G$-invariant linear subspace $U$ of $\K^3$. 

    If $\dim U = 1$, then $G$ fixes a point of $\mathbb{P}^2(\K)$, and we are done.  
    
    Suppose that $\dim U = 2$. We have that $G \curvearrowright \mathbb{P}^2(\K) \setminus U$. Choosing a basis $(u_1, u_2)$ of $U$ and then extending this to a basis $(u_1, u_2, u_3)$ of $\mathbb{P}^2(\K)$, we have $\mathbb{P}^2(\K) \setminus U = \{[x_1:x_2:x_3] : x_3 \neq 0\} = \{[x_1:x_2:1]:x_1,x_2 \in \K\} \cong \K^2$. Furthermore, each $g = \begin{bmatrix}
        g_{11} & g_{12} & g_{13} \\
        g_{21} & g_{22} & g_{23} \\
        0 & 0 & g_{33}
    \end{bmatrix} \in G$ acts as:

    \begin{align*}
        g[x_1:x_2:1] &= [g_{11} x_1 + g_{12}x_2 + g_{13}:g_{21} x_1 + g_{22}x_2 + g_{23}:g_{33}] \\
        &= [\frac{g_{11}}{g_{33}}x_1 + \frac{g_{12}}{g_{33}}x_2 + \frac{g_{13}}{g_{33}}:\frac{g_{21}}{g_{33}} x_1 + \frac{g_{22}}{g_{33}}x_2 + \frac{g_{23}}{g_{33}}:1]
    \end{align*}

    So $g$ acts on the vector $(x_1, x_2) \in \K^2$ as the affine transformation $$(x_1, x_2) \mapsto \begin{bmatrix}
        \frac{g_{11}}{g_{33}} & \frac{g_{12}}{g_{33}} \\
        \frac{g_{21}}{g_{33}} & \frac{g_{22}}{g_{33}}
    \end{bmatrix}(x_1, x_2) + (\frac{g_{13}}{g_{33}}, \frac{g_{23}}{g_{33}})$$

    Therefore, we can identify $\mathbb{P}^2(\K) \setminus U$ with $\K^2$ via $[x_1:x_2:1] \mapsto (x_1, x_2)$ and $G$ with its image $G < \mathrm{Aff}(\K^2) = \K^2 \rtimes GL_2(\K)$ via $[g] \mapsto \frac{g}{g_{33}}$. Note that since $G < PGL_3(K)$, we can choose the representative $g$ to be in $GL_3(K)$, and so the image of $G$ is actually in $\mathrm{Aff}(K^2) = K^2 \rtimes GL_2(K)$. The identification of $\mathbb{P}^2(\K) \setminus U$ with $\K^2$ is then equivariant with respect to the map $[g] \mapsto \frac{g}{g_{33}}$. 

    Consider the image $H$ of $G$ in $GL_2(K) < GL_2(k)$. First, suppose that $H$ is virtually solvable. Then $G$ is virtually solvable, since the kernel of the map $G \to H$ is a subgroup of unipotent matrices $U_3(K)$, which is nilpotent. By Corollary \ref{cor: virtually solvable implies elliptic}, we obtain that $G$ has a finite index subgroup fixing a point of $\mathbb{P}^2(K) < P$.

    Next, suppose that $H$ is not virtually solvable. Then by the Tits alternative, there exists a non-abelian free subgroup $F_2 \cong H' < H$.  We will show that there exists an element $g \in H'$ with eigenvalues $\lambda, \lambda^{-1}$ where $\lambda \in k$ is not a root of unity. 

    Let $S = H' \cap SL_2 (k)$. Then every $g \in S$ has set of eigenvalues of the form $\{\lambda, \lambda^{-1}\}$ for $\lambda$ not a root of unity (when $g$ is non-trivial and diagonalisable over $k$) or $\{\lambda\}$ where $\lambda$ is a square root of unity (corresponding to when $g$ is trivial or $g$ is not diagonalisable over $k$). If there are no non-trivial diagonalisable elements of $S$, i.e.\ each $g \in S$ has eigenvalues all roots of unity, then by \cite[Proposition 14.44]{KapDrutu}, we have that $S$ has a finite index subgroup which is conjugate into the group $T_2(k)$, hence $S$ is virtually solvable. Since $S \triangleleft H' \cong F_2$ and is virtually solvable, we must have that $S = 1$. However, this is a contradiction, as taking any two non-commuting elements $s,t \in H'$, we have that $g = [s,t] \in S \setminus 1$. Thus, we conclude that there exists $g \in S$ with set of eigenvalues of the form $\{\lambda, \lambda^{-1}\}$ for $\lambda \in k$ not a root of unity. Lifting $g$ to $G$, we have that $g$ has Jordan normal form $\begin{bmatrix}
        \lambda & & \\
        & \lambda^{-1} & \\
        & & 1
    \end{bmatrix}$.

    Since $\lambda$ is not a root of unity, there exists a local norm $\vert \cdot \vert$ on an extension $\mathbb{F}$ of the subfield $F$ of $k$ generated by the prime subfield of $k$, the eigenvalues of $g$ and the matrix entries of a finite generating set of $G$, such that $\vert \lambda \vert \neq 1$ (we can assume $\vert \lambda \vert > 1$, passing to $g^{-1}$ if necessary). We then have that $g$ is a very proximal element with dominant eigenvalue $\lambda$ and minimal eigenvalue $\lambda^{-1}$. This is a contradiction, since we assumed that $G$ has no very proximal elements. We conclude therefore that $H$ must be virtually solvable and thus that $G$ has a finite index subgroup fixing a point of $P$. 
\end{proof}

\begin{ques}
    Is Lemma \ref{lem: no very proximal implies virtual fixed point on P} true for higher dimensional projective spaces?
\end{ques}

We now conclude the proof of the main theorem. We consider two cases on $G$. 

\begin{enumerate}
    \item If $G$ has a very proximal element with respect to some local field $\K$ extending a finitely generated subfield $K$ of $k$ such that $G < PGL_3(K)$, then by Lemma \ref{lem: very proximal}, we have that $\Gamma < \prod_{\mathbb{P}^2(\K)}^r G_0 \rtimes G$ is elliptic on $\prod_{\mathbb{P}^2(\K)}^r X_0$. Since $G < PGL_3(K)$, we can find a fixed point of $\Gamma$ in $\prod_{\mathbb{P}^2(K)}^r X_0$ by setting the coordinates outside $\mathbb{P}^2(K)$ of a fixed point $z$ of $\Gamma$ on $\prod_{\mathbb{P}^2(\K)}^r X_0$ to $x_0$. Extending the coordinates of a fixed point $x$ of $\Gamma$ in $\prod_{\mathbb{P}^2(K)}^r X_0$ to $X = \prod_P^{r} X_0$ by putting $z_p = x_p$ for $p \in \mathbb{P}^2(K)$ and $z_p = x_0$ for $p \in P \setminus \mathbb{P}^2(K)$, we obtain a fixed point $z \in X$ for $\Gamma$. 
    \item If $G$ does not have any very proximal element as in case 1, then by Lemma \ref{lem: no very proximal implies virtual fixed point on P}, we have that $G$ has a finite index subgroup which fixes a point of $\mathbb{P}^2(\K)$ for some extension $\K$ of a finitely generated subfield $K$ of $k$. By Lemma \ref{lem: virtual fixed point on P implies elliptic}, it follows that $\Gamma$ is elliptic on $\prod_{\mathbb{P}^2(\K)}^r X_0$, and hence on $\prod_{\mathbb{P}^2(K)}^r X_0$. Extending the coordinates of a fixed point $x$ of $\Gamma$ in $\prod_{\mathbb{P}^2(K)}^r X_0$ to $X = \prod_P^{r} X_0$ by putting $z_p = x_p$ for $p \in \mathbb{P}^2(K)$ and $z_p = x_0$ for $p \in P \setminus \mathbb{P}^2(K)$, we obtain a fixed point $z \in X$ for $\Gamma$. 
\end{enumerate}

\section{Appendix: Actions with virtually nilpotent projection}
\label{sec: virtually nilpotent case}

In this section, we prove Lemma \ref{lem: virtually nilpotent projection}. The results in this section are not necessary for the paper, but we believe they are of independent interest and so we include them in the appendix.  

Through this section, we fix a based set $(X_0, x_0)$ with a group $G_0$ acting decently on $X_0$ and an arbitrary index set $P$ with a group $H$ acting on $P$. We form the restricted product $X$ and the group $G^{\oplus} \curvearrowright X$, and denote $\psi(\Gamma)$ the projection of $\Gamma$ onto the $H$ factor in the semi-direct product, as in Section \ref{res prod}.

We will prove the following, generalizing \cite[Lemma 4.7]{Algebraic}:

\begin{lem}
    \label{lem: nilpotent case}
    Let $G_0 \curvearrowright X_0$ be a decent action of the group $G_0$ on the set $X_0$, with basepoint $x_0$. Let $H \curvearrowright P$ be an arbitrary action of the group $H$ on the set $P$. Let $\Gamma < G^{\oplus}$ be finitely generated and act purely elliptically on $X$. Suppose that $G:= \psi(\Gamma) < H$ is virtually nilpotent. Then $\Gamma$ fixes a point of $X$. 
\end{lem}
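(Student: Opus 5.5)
First I reduce to the case that $G$ is nilpotent. A finite index nilpotent subgroup $N<G$ pulls back to a finite index subgroup $\psi^{-1}(N)\cap\Gamma$ of $\Gamma$. This subgroup is finitely generated and purely elliptic. By \cite[Remark 4.2]{Algebraic}, ellipticity of a finite index subgroup implies ellipticity of $\Gamma$, so it suffices to treat nilpotent $G$.

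I then induct on the nilpotency class $c$ of $G$. For $c\le 1$, $G$ is abelian. If $G$ is trivial, or contains an element with only finitely many finite orbits on $P$, this is \cite[Lemma 4.7]{Algebraic}. The new point is to remove that hypothesis.

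I would split $P$ into two $G$-invariant parts:
\begin{itemize}
\item $P_{\mathrm{fin}}$, the union of the finite $G$-orbits;
\item $P_{\mathrm{inf}}$, the union of the infinite $G$-orbits.
\end{itemize}
The restricted product splits as $X_0^{\oplus P_{\mathrm{fin}}}\times X_0^{\oplus P_{\mathrm{inf}}}$, and $\Gamma$ acts diagonally. It suffices to find a fixed point in each factor, since the product of the two fixed points is fixed.

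On a single infinite $G$-orbit $O$, the finite generation of $G$ and nilpotency give some $t\in G$ with an infinite orbit meeting $O$. I would take $t$ to be a generator, or a suitable element of the centre via a Cornulier-type argument; this is presumably the role of the forthcoming Lemma~\ref{lem: decent actions nilpotent}. By Lemma~\ref{Inf orb fix pt}, any fixed point of (a lift of) $t$ has uniquely determined coordinates on the infinite $t$-orbits. I would then use commutation, or normality in the nilpotent setting, of $\langle t\rangle$ to show these forced coordinates are permuted consistently by all of $\Gamma$. The argument follows \cite[Lemma 4.7]{Algebraic}: coordinates forced by $t$ are $\Gamma$-equivariant because conjugates of $t$ have the same forced values. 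Since only finitely many generators of $\Gamma$ are singular anywhere, one gets a point of $X$, with only finitely many non-basepoint coordinates.

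On $P_{\mathrm{fin}}$, the infinitely many finite orbits are the difficulty. Each generator is singular over only finitely many points, so outside a finite $G$-invariant set $F\subset P_{\mathrm{fin}}$ all generators are biregular with respect to the basepoint. On $F$ itself, $\Gamma$ acts through a finite product of copies of $G_0$ twisted by a finite permutation action. For each orbit $Gq\subset F$, the stabilizer $\mathrm{Stab}_\Gamma(q)$ has finite index in $\Gamma$, so it is finitely generated. Its projection to $G_0$ is purely elliptic on $X_0$, hence fixes a point by decentness of $G_0\curvearrowright X_0$. Inducing this point along the orbit gives a $\Gamma$-fixed point on $X_0^{\oplus Gq}$.

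For the inductive step $c>1$, let $Z$ be the last nontrivial term of the lower central series of $G$. I would run the abelian argument for $Z$ (or a finitely generated central subgroup), and then pass to the quotient action on fixed sets, which has lower class. I expect the main obstacle to be this step. Fixed-point sets are not restricted products, so I would instead organize the proof as in Corollary~\ref{solvable lemma}, using the ``abelian normal subgroup'' lemma. The missing hypothesis there (an element with finitely many finite orbits) is handled by the $P_{\mathrm{fin}}/P_{\mathrm{inf}}$ splitting above, applied orbit-type by orbit-type.
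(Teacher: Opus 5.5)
Your reduction to nilpotent $G$, your treatment of the finite $G$-orbits, and your abelian case via \cite[Lemma 4.7]{Algebraic} are fine. For the finite orbits, the finite-index stabilisers are finitely generated, so decentness of $G_0\curvearrowright X_0$ applies and the fixed point can be induced along the orbit. The gap is the step for nilpotency class $\geq 2$, which you never actually carry out.

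You rightly abandon the fixed-set idea. Besides fixed sets not being restricted products, $\psi^{-1}(Z)\cap\Gamma$ contains $\ker\psi\cap\Gamma$, which need not be finitely generated, so your class-one argument does not apply to it. Your replacement is to invoke Lemma~\ref{normal nilpotent}, with its hypothesis (an element $t$ of an abelian normal subgroup $N\triangleleft G$ with only finitely many finite orbits) ``handled by the $P_{\mathrm{fin}}/P_{\mathrm{inf}}$ splitting''. That does not work. The splitting is by $G$-orbits, but on an infinite $G$-orbit an abelian normal subgroup can have only finite orbits.
\begin{itemize}
\item Take the Heisenberg group $G=\langle x,y\rangle$ with $z=[x,y]$ central, acting on the infinite set $G/\langle x,z\rangle$. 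The last term $\gamma_2G=Z(G)=\langle z\rangle$ of the lower central series acts trivially.
\item Take the free nilpotent group of class $3$ on two generators acting on $G/\gamma_2G$. Here \emph{every} abelian normal subgroup lies in $\gamma_2G$, so each one acts trivially.
\end{itemize}
Lemma~\ref{lem: decent actions nilpotent} gives some $t\in G$ with all orbits infinite, but not one inside an abelian normal subgroup. Lemma~\ref{normal nilpotent} genuinely needs $t\in N$, because its proof uses that every element of $\psi^{-1}(N)$ commutes with $t$ in $G$.

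The paper never looks for such an $N$. On a single infinite orbit it takes $t$ from Lemma~\ref{lem: decent actions nilpotent}, a fixed point $z$ of $t$, and a normal series $1=G_0<\cdots<G_n=G$ with cyclic factors. It then proves by induction on $i$ two things at once: $\langle f^2: f\in\psi^{-1}(G_i)\rangle$ fixes $z$, and the $G_i$-saturation of the finite set $B_{f^2}$ over which $f^{\pm 2}$ is singular stays finite. The induction runs the persistent-fibre argument for $(t^N)^nf^2$, where $t^N$ commutes with $f^2$ modulo $G_i$. At the end $\Gamma/\Gamma^2$ is finite, so $\Gamma z$ is finite and \cite[Remark 4.2]{Algebraic} applies.

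Your route can also be repaired, but it needs an idea you do not have.
\begin{itemize}
\item After restricting to one infinite orbit $O$, replace $G$ by its image in $\mathrm{Sym}(O)$. The action of $\Gamma$ on $X_0^{\oplus O}$ factors through the corresponding quotient of $\Gamma$, which is still finitely generated and purely elliptic. The action on $O$ is now faithful and transitive.
\item By transitivity, a central element fixing one point of $O$ fixes all of them. So nontrivial central elements act freely.
\item An infinite finitely generated nilpotent group has a central element of infinite order. If $Z(G)$ had finite exponent $e$, every upper central factor would have exponent dividing $e$, hence be finite.
\end{itemize}
Such a central element has all orbits infinite on $O$. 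Lemma~\ref{normal nilpotent} with $N=Z(G)$ then applies directly, with no induction on the class at all. As written, however, your argument does not cover nonabelian $G$.
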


We begin with the following lemma, whose proof is due to Yves Cornulier. 

\begin{lem}
    \label{lem: decent actions nilpotent}
    Let $G$ be a finitely generated nilpotent group acting transitively on an infinite set $P$. Then there exists an element $t \in G$ all of whose orbits are infinite. 
\end{lem}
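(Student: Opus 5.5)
The plan is to translate the statement into a condition on a point stabilizer, and then to detect infinite orbits in the torsion-free abelianization of $G$.

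Since the action is transitive, we can write $P = G/K$, where $K$ is the stabilizer of a point. Because $P$ is infinite, $K$ has infinite index in $G$. For $t \in G$, the $\langle t\rangle$-orbit of $gK$ is finite if and only if $t^n g K = gK$ for some $n \neq 0$, that is, $t^n \in gKg^{-1}$. So it suffices to find $t \in G$ such that no nonzero power of $t$ lies in any conjugate of $K$.

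Let $\pi: G \to A := G^{ab}/\mathrm{tors} \cong \Z^d$ be the projection. All conjugates of $K$ have the same image $\pi(K)$, since $A$ is abelian. Suppose we know that $\pi(K)$ has rank strictly less than $d$. Then its rational span $W = \pi(K)\otimes \mathbb{Q}$ is a proper subspace of $\mathbb{Q}^d$, so we may choose $t \in G$ with $\pi(t) \notin W$. For every $n \neq 0$ we then have $\pi(t^n) = n\pi(t) \notin W$, hence $t^n \notin gKg^{-1}$ for all $g$. Therefore every $\langle t \rangle$-orbit on $P$ is infinite.

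It remains to show that $\pi(K)$ has rank $< d$. If instead $\pi(K)$ had finite index in $A$, then $K[G,G]$ would have finite index in $G$, because the torsion of the finitely generated abelian group $G^{ab}$ is finite. So the key claim is the following: in a finitely generated nilpotent group, if $K\gamma_2(G)$ has finite index in $G$, then $K$ has finite index in $G$.

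I expect this claim to be the main obstacle, and I will prove it by climbing the lower central series $\gamma_1 = G \supseteq \gamma_2 \supseteq \cdots \supseteq \gamma_{c+1} = 1$. Each quotient $\gamma_i/\gamma_{i+1}$ is a finitely generated abelian group. The $i$-fold commutator map $G^i \to \gamma_i/\gamma_{i+1}$ is multilinear and factors through $(G^{ab})^{i}$, and its image generates $\gamma_i/\gamma_{i+1}$. Let $m$ be the index of the image of $K$ in $G^{ab}$. For any $g_1,\dots,g_i \in G$, the element $[g_1,\dots,g_i]^{m^i}$ is congruent modulo $\gamma_{i+1}$ to a commutator of elements of $K$. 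Hence the subgroup of $\gamma_i/\gamma_{i+1}$ generated by iterated commutators of elements of $K$ has finite index.

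The induction statement is that $K\gamma_{i}$ has finite index in $G$ for each $i$. The base case $i=2$ is the hypothesis. For the inductive step, suppose $K\gamma_i$ has finite index in $G$. By the previous paragraph, $(K\cap\gamma_i)\gamma_{i+1}$ has finite index in $\gamma_i$. Therefore $K\gamma_{i+1}$ has finite index in $K\gamma_i$, and hence in $G$. Taking $i = c+1$ gives that $K$ has finite index in $G$.

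This contradicts $|P| = \infty$, so $\pi(K)$ has rank $<d$, and the element $t$ constructed above has all of its orbits on $P$ infinite.
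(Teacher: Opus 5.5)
Your proof is correct, and it takes a genuinely different route from the paper's.

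The paper argues by contradiction from a counterexample of minimal nilpotency class. It enlarges the point stabilizer $H$ to a maximal infinite-index subgroup and considers $N=Z(G)$. If $N\le H$ it passes to $G/N$. Otherwise it passes to $NH\curvearrowright NH/H$, where $H$ is normal, and concludes because the finitely generated nilpotent torsion group $NH/H$ would be finite.

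You instead work in $G^{ab}/\mathrm{tors}\cong\Z^d$, where all conjugates of the stabilizer have the same image. This reduces everything to a standard fact: a subgroup of a finitely generated nilpotent group whose image in $G^{ab}$ has finite index itself has finite index. You prove this correctly using multilinearity of the commutator maps into $\gamma_i/\gamma_{i+1}$.

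Your route costs this lower central series computation, whereas the paper only needs that finitely generated nilpotent torsion groups are finite. In exchange it is constructive and gives more. Any $t$ whose image in $G^{ab}\otimes\mathbb{Q}$ lies outside the span of the stabilizer's image works. So one $t$ can be chosen with all orbits infinite on finitely many infinite transitive $G$-sets at once, since finitely many proper subspaces cannot cover $\Z^d$.

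It also avoids a step in the paper's reduction to $NH\curvearrowright NH/H$. That step needs every element of $NH$ to have a finite orbit on $NH/H$ itself, not merely on some other $NH$-orbit of $G/H$. The paper asserts this ("the same properties as $G \curvearrowright P$") without justification. Your lemma supplies one: it shows that a maximal infinite-index subgroup of $G$ is the full preimage of the intersection of $G^{ab}/\mathrm{tors}$ with a rational hyperplane. Hence it contains $[G,G]$ and is normal in $G$.
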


\begin{proof}
    Suppose otherwise and let $G$ be a counterexample of minimal nilpotency class, so that every element $g \in G$ has a finite orbit on $P$ and $G$ has minimal nilpotency class among finitely generated nilpotent groups exhibiting such an action. By the orbit stabilizer theorem, there exists an infinite index subgroup $H$ of $G$ such that $P = G/H$. By Zorn's lemma, since $G$ is finitely generated, we have that $H$ is contained in a maximal infinite index subgroup, so we can assume that $H$ is maximal infinite index in $G$. Let $N = Z(G)$ be the center of $G$. Then $G/N$ has strictly smaller nilpotency class than $G$. If $N < H$, then $G/N$ acts on $G/H=P$ with the same properties as $G \curvearrowright P$, which contradicts the minimality of the nilpotency class of $G$. We therefore have that $N$ is not a subgroup of $H$, and so $NH$ properly contains $H$, hence $NH$ has finite index in $G$. Thus, $NH/H$ is infinite and $NH \curvearrowright NH/H$ has the same properties as $G \curvearrowright P$, so we can assume that $G = NH$. Since $N = Z(G)$, we have that $H \triangleleft NH$. Thus, the property of an element having a finite orbit on $P$ is equivalent to the property of having every orbit finite on $P$. Hence, every element of $G$ has all orbits finite on $P$. But this means that the group $G/H$ is finitely generated nilpotent and torsion, hence finite. This contradicts that $H$ has infinite index in $G$. 
\end{proof}

\begin{proof}[Proof of Lemma \ref{lem: nilpotent case}]

Throughout the proof, we will abuse notation and use the same letter to denote an element of $\Gamma$ and its image in $G$.

First, suppose that $G$ is nilpotent. Let $\{\gamma_1,\ldots,\gamma_n\}$ be a finite set of generators of $\Gamma$, writing each $\gamma_i = (g_i, h_i)$ for some $g_i \in \prod_P G_0$ and $h_i \in G$. Since $\Gamma < G^{\oplus}$, we have that for each $i = 1,\ldots,n$, there are only finitely many $p \in P$ such that $(g_i)_p \notin Stab_{G_0}(x_0)$. Letting $Q$ be the union of the $G$-orbits of such $p \in P$, for each $x \in X$ such that $x_p = x_0$ for all $p \notin Q$, we have that $(\gamma_i x)_p = x_0 = x_p$ for each $i$ and each $p \notin Q$. Thus, it suffices to show that $\Gamma$ fixes a point of $X_0^{\oplus Q}$. Since $Q$ is a union of finitely many $G$-orbits, we can assume that $P$ is a single $G$-orbit, so that $G \curvearrowright P$ is transitive. If $P$ is finite, then $\Gamma$ fixes a point of $X$ by \cite[Remark 4.2]{Algebraic}, so we can assume that $P$ is infinite. Then by Lemma \ref{lem: decent actions nilpotent}, there exists $t \in G$ all of whose orbits on $P$ are infinite. Let $z$ be an arbitrary fixed point of $t$ on $X$.

By subdividing a central series for $G$, there exists a normal series $\{1\} = G_0 < G_1 < G_2 < \ldots < G_n = G$ for $G$ where $G_{i+1} / G_i$ is cyclic for each $i = 0, \ldots, n-1$. Denote a lift of a generator of $G_{i+1} / G_i$ to $G_{i+1}$ by $g_{i+1}$, so that $G_i = \langle g_1, g_2, \ldots, g_i \rangle$ for each $i = 1,\ldots, n$. Let $\Gamma_i = \psi^{-1}(G_i)$ for each $i$. 

    For each $f \in \Gamma$, let $B_f = \{b \in P: f \text{ or $f^{-1}$ is singular over } b \text{ with respect to }z\}$ and denote $\Gamma_i^2 = \langle f^2 : f \in \Gamma_i \rangle$ for each $i$.  First, we note that for each $i$, there exists $N \in \N$ such that for any $h \in G_{i+1}$ and $g \in G$, $[g^N, h] \in G_i$. Indeed, note that $G \curvearrowright G_{i+1}/G_i = \langle g_{i+1} \rangle$ by group automorphisms via:
    
    $$g \cdot h G_i = ghg^{-1} G_i$$

    for each $g \in G$ and each $h \in G_{i+1}$ (this action is well-defined since $G_{i+1}$ and $G_i$ are both normal in $G$). Since $G_{i+1}/G_i$ is cyclic, its automorphism group is finite and hence there exists $N \in \N$ such that for any $g \in G$, $g^N$ acts trivially on $G_{i+1}/G_i$, so that $[g^N, h] \in G_i$ for each $h \in G_{i+1}$ (if $G_{i+1}/G_i$ is infinite cyclic, then we can take $N = 2$). 
    
    We will show by induction on $i$ that $\Gamma_i^2(z) = z$ for each $i$ and that $G_i B_{f^2}$ is finite for each $f \in \Gamma$. 

    For the base case $i = 0$, let $f \in \Gamma_0 = \ker \psi$. Then on $P$, we have $f = 1$. We need to show that $f$ is biregular over each $p \in P$ with respect to $z$. Let $p \in P$. Since $\langle t \rangle p$ is infinite and since $B_{f^2}$ is finite, there exists $n \in \N$ such that for all $M \neq 0$, we have $(t^nf^2)^M(p) = (t^n)^M (p) \notin B_{f^2}$. Since $t(z) = z$, arguing as in the proof of \cite[Lemma 4.7]{Algebraic}, we have that $f^2$ is biregular over $p$ with respect to $z$ (in fact, $f$ is biregular over $p$ with respect to $z$). Hence, $f^2(z) = z$. Therefore, $\Gamma_0^2 (z) = z$. 

    That $G_0 B_{f^2}$ is finite for each $f \in \Gamma$ is true because $G_0 = \{1\}$ and $B_{f^2}$ is finite for each $f \in \Gamma$. 

   Now suppose that $\Gamma_i^2(z) = z$ and that $G_iB_{f^2}$ is finite for each $f \in \Gamma$. First, we show that $\Gamma_{i+1}^2(z) = z$.  Let $f \in \Gamma_{i+1}$ and let $p \in P$. Recall that there exists $N \in \N$ such that $[t^N,f^2] \in G_i$. To simplify notation below, denote $s: =t^N$. 

   We will show that there exists $n \in \N$ such that for all $M \neq 0$, 

    $$(s^n f^2)^M (p) \notin B_{f^2}$$

    We consider two cases. 
    
    \vs
    
    First, suppose that there exist $k, l \in \Z$ with $l \neq 0$ such that $s^k (p) = (f^2)^{l}(p)$. Since $G_iB_{f^2}$ is finite and $\langle s \rangle p$ is infinite, there exists $m_0 \in \N$ such that for all $m \geq m_0$, $s^{\pm m} (p) \notin (f^2)^j G_i B_{f^2}$ for any $j$ with $\vert j \vert < \vert l\vert$. 

    Set $n \in \N$ greater than $m_0 + \vert k \vert$. Then for all $M \neq 0$, we have: 

    \begin{align*}
        (s^n f^2)^M (p) &\in s^{nM} (f^2)^{M} G_i p \text{, since $[s, f^2] \in G_i$} \\
        &= s^{nM} (f^2)^j (f^2)^{ql} G_i p \text{, writing } M = ql + j \text{ for some $q \in \Z$ and $0 \leq j < \vert l \vert$} \\
        &= s^{nM} (f^2)^j G_i s^{qk}(p) \text{, since $s^k p = (f^2)^l p$ and $G_i \triangleleft G$}\\
        &= s^{nM+qk} (f^2)^j G_i p
    \end{align*}

    Then if $(s^n f^2)^M (p) \in B_{f^2}$, from above we would obtain: 

    $$s^{nM+qk} (f^2)^j (p) \in G_i B_{f^2} \implies s^{nM+qk}(p) \in (f^2)^{-j} G_i B_{f^2}$$

    contradicting our choice of $n$, since $\vert nM + qk \vert > m_0$ for each $M \neq 0$.

    Next, suppose that there do not exist $k,l$ as in the above case. Then since $G_iB_{f^2}$ is finite and $\langle s \rangle p$ is infinite, there are only finitely many $k \in \Z$ such that $s^k (p) \in \langle f^2 \rangle G_i B_{f^2}$. Choose $n \in \N$ such that $n > \max\{\vert k \vert : s^k (p) \in \langle f^2 \rangle G_i B_{f^2}\}$. If $(s^nf^2)^M (p) \in B_{f^2}$ for $M \neq 0$, then we obtain $s^{nM}(p) \in \langle f^2 \rangle G_i B_{f^2}$, a contradiction. 

    Therefore, we obtain $n \in \N$ such that for all $M \neq 0$
    
    $$(s^n f^2)^M (p) \notin B_{f^2}$$

    Since $s(z) = z$, we obtain that $f^2$ is biregular over $p$ with respect to $z$ following a similar induction argument as in Section \ref{sec: NSD element}. Therefore, $\Gamma_{i+1}^2(z) = z$. 

    Now we show that $G_{i+1} B_{f^2}$ is finite for each $f \in \Gamma$. If $G_{i+1} B_{f^2}$ were infinite, then by the finiteness of $B_{f^2}$ and Lemma \ref{lem: decent actions nilpotent}, there exists $b \in B_{f^2}$ and $g\in G_{i+1}$ such that $\langle g \rangle b$ is infinite. In particular, note that this implies $G_{i+1}/G_i$ is infinite cyclic, since if not then $g^N \in G_i$ for some $N$ and so $\langle g^N \rangle b$ would be an infinite orbit of an element of $G_i$ in $B_{f^2}$, contradicting that $G_i B_{f^2}$ is finite. Since $G_{i+1}/G_i$ is infinite cyclic, we have that $[f^2, g^2] \in G_i$. By the above arguments, since $[f^2, g^2] \in G_i$, $G_i B_{f^2}$ is finite, $\langle g^2 \rangle b$ is infinite and since $g^2(z) = z$, we obtain that $f^2$ is biregular over $b$ with respect to $z$, contradicting that $b \in B_{f^2}$. Thus, we conclude that $G_{i+1} B_{f^2}$ is finite for each $f \in \Gamma$. 

    At the end of this induction, we obtain that $\Gamma^2 (z) = z$. Note that $\Gamma^2 \triangleleft \Gamma$ and that $\Gamma / \Gamma^2$ is a finitely generated, 2-torsion (hence, abelian) group, and therefore it is finite. Thus, the orbit $\Gamma z$ is finite, and so $\Gamma$ fixes a point of $X$ by \cite[Remark 4.2]{Algebraic}. 

    Lastly, if $G$ is virtually nilpotent, then denoting $N < G$ a nilpotent finite index subgroup of $G$ and letting $\Gamma_N = \psi^{-1}(N)$, we have that $\vert \Gamma :  \Gamma_N \vert < \infty$ and $\Gamma_N$ fixes a point $x$ of $X$ by above, so $\Gamma x$ is finite. By \cite[Remark 4.2]{Algebraic}, we have that $\Gamma$ fixes a point of $X$. 
    
\end{proof}

We present another general case of when $G^{\oplus}$ acts decently on $X$, based on the algebraic structure of $G = \psi(\Gamma)$, though we will not need the following lemma in the proof of the main theorem. 

\begin{lem}
    \label{normal nilpotent}
    Let $G_0 \curvearrowright X_0$ be a decent action of the group $G_0$ on the set $X_0$. Let $H \curvearrowright P$ be an arbitrary action of the group $H$ on the set $P$. Let $\Gamma < G^{\oplus}$ be finitely generated and act purely elliptically on $X$. Suppose that there exists an abelian normal subgroup $N \triangleleft G:= \psi(\Gamma)$ and $t \in N$ with only finitely many finite orbits on $P$. Then $\Gamma$ fixes a point of $X$. 
    
\end{lem}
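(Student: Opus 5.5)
We fix a fixed point $z$ of $t$ in $X$, which exists because $\Gamma$ acts purely elliptically. The plan is to modify $z$ on the finitely many finite $t$-orbits so that it becomes $\Gamma$-fixed. Let $F \subset P$ be the union of the finitely many finite $\langle t\rangle$-orbits. This set is finite. Since $N$ is abelian and normal, every $g \in G$ conjugates $t$ into $N$. The key structural input is that every $n \in N$ commutes with $t$, so $n$ permutes the $t$-orbits, preserving their sizes, and hence $NF = F$.

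The first step is to show that every $f \in \Gamma_N := \psi^{-1}(N)$ is biregular with respect to $z$ over every $p \notin F$. For such $p$ the orbit $\langle t\rangle p$ is infinite, and $f$ commutes with $t$ on $P$, so $(t^m f)^M(p) = t^{mM} f^M(p)$. I would argue as in the proof of \cite[Lemma 4.7]{Algebraic}, and as in the base case of Lemma \ref{lem: nilpotent case}. Let $B_f$ be the finite singular set of $f^{\pm 1}$. Then $\langle f\rangle B_f$ meets $\langle t\rangle p$ in finitely many points, or $f^l p = t^k p$ for some $l \neq 0$, in which case the residue trick with $M = ql + j$ applies. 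Either way one chooses $m$ with $(t^mf)^M(p) \notin B_f$ for all $M \neq 0$. If $f$ were singular over $p$, the same inductive biregularity bookkeeping via \cite[Remark 4.4]{Algebraic} would show that $t^m f$ has persistent fibre over $p$, contradicting pure ellipticity.

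The second step handles $F$. The group $\Gamma_N$ preserves the finite set $F$, and the finitely many coordinates over $F$ form a finite-orbit situation. By \cite[Remark 4.2]{Algebraic}, or by applying decency of $G_0$ to the finite-index subgroup of $\Gamma_N$ fixing $F$ pointwise together with the first step, we redefine $z$ on $F$ to get $z'$ fixed by $\Gamma_N$. The first step shows that $z'$ still agrees with the unique values forced on infinite $t$-orbits by Lemma \ref{Inf orb fix pt}.

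The third step passes to all of $\Gamma$, and this is where I expect the main obstacle. For $g \in \Gamma$, the element $g t g^{-1} \in N$ also has only finitely many finite orbits and fixes $z'$. I would try to show that $g z'$ is fixed by $g\Gamma_N g^{-1} = \Gamma_N$. It then agrees with $z'$ on all infinite orbits of $t$, by Lemma \ref{Inf orb fix pt}, so $gz'$ and $z'$ differ only on the finite set $F$. Hence the $\Gamma$-orbit of $z'$ lies in a set of points that differ from $z'$ only on $F$, with $F$ finite and $\Gamma$ acting on it. This does not immediately make the orbit finite, since $X_0$ may be infinite. To get finiteness, I would restrict to the subgroup $\Gamma_F$ of $\Gamma$ stabilizing $F$ pointwise. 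This subgroup has finite index, because $F$ is $G$-invariant: $gF$ is the union of the finite orbits of $gtg^{-1} \in N$, which all lie in $F$ by the same commutation argument. I would then apply decency of $G_0$ coordinatewise on $F$ to the finitely generated, purely elliptic projections of $\Gamma_F$, obtaining a $\Gamma_F$-fixed point. A finite orbit for $\Gamma$ follows, and \cite[Remark 4.2]{Algebraic} concludes the proof.
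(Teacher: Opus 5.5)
Your architecture is the paper's. You show that the union $F$ of the finite $t$-orbits is $G$-invariant. You prove, by the persistent-fibre argument of \cite[Lemma 4.7]{Algebraic}, that every element of $\Gamma_N=\psi^{-1}(N)$ is biregular over $P\setminus F$ with respect to $z$. Then normality of $\Gamma_N$ and Lemma \ref{Inf orb fix pt} show that all of $\Gamma$ fixes these $t$-determined coordinates. Step 1 and your argument that $gF=F$ are correct.

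\textbf{The gap is Step 2.} Item 2 of decency, and any finite-index-set statement you cite from \cite[Remark 4.2]{Algebraic}, concern \emph{finitely generated} groups only. Such a statement cannot hold without finite generation, already when $|P|=1$. But $\Gamma_N$ need not be finitely generated, since it surjects onto $N$. An abelian normal subgroup of a finitely generated group can be infinitely generated. For example, take $G=H=\Z\wr\Z$ acting on itself by left translation, with one extra $G$-fixed point adjoined to $P$. Let $N$ be the base group and $t\in N\setminus\{1\}$. Then $F$ is the extra point, and the subgroup of $\Gamma_N$ fixing $F$ pointwise is $\Gamma_N$ itself, which is not finitely generated. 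So neither justification you give produces the $\Gamma_N$-fixed point $z'$. Step 3, as you wrote it, rests on $z'$: you use that $gtg^{-1}$ fixes $z'$ and that $gz'$ is $\Gamma_N$-fixed.

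\textbf{The repair.} Delete Step 2 and reorder:
\begin{enumerate}
\item Prove $GF=F$ first. Then $X=X_0^{\oplus F}\times X_0^{\oplus(P\setminus F)}$ $\Gamma$-equivariantly.
\item On the first factor, the finitely generated group $\Gamma$ has a fixed point. Use your finite-index subgroup $\Gamma_F$, decency of $G_0$ coordinatewise, and item 1 of decency for the restricted product.
\item On the second factor, Step 1 already shows that $\Gamma_N$ fixes $z|_{P\setminus F}$. For $g\in\Gamma$, the point $g\cdot z|_{P\setminus F}$ is fixed by $g\Gamma_N g^{-1}=\Gamma_N$, which contains a lift of $t$. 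Since $t$ has only infinite orbits on $P\setminus F$, Lemma \ref{Inf orb fix pt} gives $g\cdot z|_{P\setminus F}=z|_{P\setminus F}$.
\end{enumerate}
This is precisely the paper's proof. It disposes of $Q=F$ first using the whole finitely generated $\Gamma$, which reduces to $t$ having all orbits infinite. It then notes that $\mathrm{Fix}(\Gamma_N)=\{z\}$ and that this set is $\Gamma$-invariant.

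Alternatively, you can salvage Step 2's conclusion. The finite-index subgroup $\Gamma_N\cap\Gamma_F$ fixes the point produced from $\Gamma_F$, so $\Gamma_N$ has a finite orbit, and item 1 applies without finite generation. But this already needs $\Gamma_F$.
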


\begin{proof}
    
    We may assume that $t$ has all orbits infinite on $P$. Indeed, let $Q$ be the union of all finite orbits of $t$ on $P$. Since $N$ is abelian, we have that $NQ = Q$. It then follows that $GQ = Q$, since for any $g \in G$ and any $q \in Q$, we have: 

    $$\langle t \rangle g q = g \langle g^{-1} t g \rangle q \subset gNQ = gQ$$

    and the latter set is finite. Therefore, $Q$ is a finite $G$-invariant subset of $P$, so by \cite[Remark 4.2]{Algebraic}, we have that $G$ fixes a point of $X_0^{\oplus Q}$. Thus, it suffices to find a fixed point of $G$ on $X_0^{\oplus P \setminus Q}$, and $t$ has all orbits infinite on $P \setminus Q$. Thus, we may assume that $t$ has all orbits infinite on $P$. 
    
    Let $z$ be an arbitrary fixed point of $t$ on $X$. Since $N$ is abelian and $t$ has all orbits infinite on $P$, the proof of \cite[Lemma 4.7]{Algebraic} or Lemma \ref{lem: nilpotent case} shows that $\Gamma_N := \psi^{-1}(N)$ fixes $z$. Since $N \triangleleft G$, we have $\Gamma_N \triangleleft \Gamma$ and so $\Gamma$ acts on the fixed point set of $\Gamma_N$. Since $t$ has all orbits infinite, by Lemma \ref{Inf orb fix pt}, we have that $z$ is the unique fixed point of $t$, hence of $\Gamma_N$, so that the fixed point set of $\Gamma_N$ is $\{z\}$. Thus, $\Gamma$ acts on $\{z\}$, and hence fixes $z$. 
\end{proof}

We do not know if Lemma \ref{lem: nilpotent case} generalizes to virtually solvable projections for arbitrary actions $H \curvearrowright P$, and we leave this problem for interested readers to explore. 

\begin{ques}
    \label{ques: virtually solvable projection}
    Let $G_0$ be a group acting decently on $X_0$. Let $H$ be a group acting on $P$, and let $\Gamma < G^{\oplus}$ be finitely generated and act purely elliptically on $X$. If $\psi(\Gamma)$ is virtually solvable then does $\Gamma$ fix a point of $X$?
\end{ques}

Note that for $P = \mathbb{P}^2(k)$ for an arbitrary field $k$ and $H = PGL_3(k)$, Question \ref{ques: virtually solvable projection} has an affirmative answer by Corollary \ref{cor: virtually solvable implies elliptic}.

\bibliographystyle{plain} 
\bibliography{refs2}

\end{document}